\newtheorem{theorem}{Theorem}[section]
\newtheorem{proposition}{Proposition}[section]
\newtheorem{lemma}{Lemma}[section]
\newtheorem{corollary}{Corollary}[section]
\newtheorem{remark}{Remark}[section]
\renewcommand{\thefootnote}{\fnsymbol{footnote}}
\newcommand{\Rmnum}[1]{\expandafter\@slowromancap\romannumeral #1@}
\numberwithin{equation}{section}
\title{Endpoint Strichartz estimates for magnetic wave equations on $\Bbb H^2$}
\author{
{Ze Li}\\
{{Academy of Mathematics and Systems Science (AMSS)}}\\
{{Chinese Academy of Sciences (CAS)}}\\
{{Beijing 100190, P. R. China}}}
\begin{document}
\maketitle
\medskip
\date
\pagestyle{fancy}
\lhead{ }
\chead{Endpoint Strichartz Estimates for Magnetic Wave Equations on $\Bbb H^2$}
\rhead{ }

\renewcommand{\thefootnote}{\fnsymbol{footnote}}
\footnotetext
{{\bf MSC2010:}  35L05,  58J45, 58J50.}
\footnotetext{{\bf Keywords:} wave equations; Kato smoothing; endpoint Strichartz; magnetic potential; hyperbolic space.}
\footnotetext{E-mail: rikudosennin@163.com  }

\noindent{\bf Abstract}
In this paper, we prove that Kato smoothing effects for magnetic half wave operators can yield the endpoint Strichartz estimates for linear wave equation with magnetic potential on two dimensional hyperbolic spaces.
This result serves as a cornerstone for the author's work \cite{Laiazaea} and collaborative work \cite{LaMaZa} in the study of asymptotic stability of harmonic maps for wave maps from $\Bbb R\times\Bbb H^2$ to $\Bbb H^2$.

\bigskip

\section{Introduction}
The motivation of this problem is the study of asymptotic stability of harmonic maps for wave maps from $\Bbb R\times\Bbb H^2$ to $\Bbb H^2$. In fact, the heat tension field $\phi_s$ which provides a natural measure for the distance between the solution of wave maps and the limit harmonic map   satisfies a master semilinear wave equation under Tao's caloric gauge. After separating the limit part of connections and differential fields and applying ``dynamic separation", the linear part of the master equation becomes a wave equation with magnetic potential:
\begin{align}\label{1}
\left\{ \begin{array}{l}
\Box u+B(x)u+2(A,{du})-(d^*A)u=F, \\
u(0,x) = {u_0}:\Bbb H^2\to \Bbb R^2,{\partial _t}u(0,x) = {u_1}(x): \Bbb H^2\to \Bbb R^2\\
\end{array} \right.
\end{align}
where $u$ is a $\Bbb R^2$-valued field defined on $\Bbb R\times \Bbb H^2$, $A=A_idx^i$ is a real anti-symmetric $2\times2$ matrix valued one form defined on $\Bbb H^2$, $B$ is a real symmetric $2\times2$ matrix defined on $\Bbb H^2$.  $(A,{du})$ denotes the metric of one forms. Since $A$ is a matrix valued one form and $du$ is a $\Bbb R^2$ valued one form, $(A,{du})$ is $\Bbb R^2$ valued as well. And $\Box=-\partial_t^2+\Delta_{\Bbb H^2}$ is the D'Alembertian on $\Bbb R\times \Bbb H^2$. Integration by parts shows $Wu:=B_0(x)u+2(A,{du})-(d^*A)u$ is formally symmetric in $L^2(\Bbb H^2;\Bbb C^2)$. In \cite{Laiazaea}, $A=A_idx^i$ is indeed the connection one form on the pullback bundle $Q^*(T\Bbb H^2)$ with $Q:\Bbb H^2\to\Bbb H^2$ denoting the limit harmonic map.  As in the Euclidean case, we may use the tight notation $Wu=Vu+Xu$ to denote the potential part, where $V:=B(x)-(d^*A) $ is a matrix valued function defined on $\Bbb H^2$ and $X:=2h^{ij}A_i\frac{\partial}{\partial x^j}$ is a matrix valued vector field defined on $\Bbb H^2$.

The wave map equation on flat spacetimes known as the nonlinear $\sigma$-model, arises as a model problem in general relativity and particle physics, see for instance \cite{MaSa,AaGaSa}. The Cauchy and dynamic problems for wave maps on flat spacetimes have been a fruitful field with plenty of works, see for instance \cite{KaSa,SaTa1a,Taataaaraua3a,Taataaaraua4a,Taaaoa1a,Taaaoa2a}.
The dynamics for the wave map equations on curved spacetimes were less understood until now. We mention the work of Shatah, Tahvildar-Zadeh \cite{SaTaZa1a} on the $\Bbb S^2\times\Bbb R$ background and Lawrie, Oh, Shahshahani \cite{LaOaSa2a,LaOaSa,LaOaSa4a,LaOaSa3a} on the $\Bbb H^n\times \Bbb R$ background.

In this paper, we focus on the endpoint Strichartz estimates for the magnetic wave equation (\ref{1}). The Strichartz estimates for magnetic Schr\"odinger equations (MS), magnetic Dirac equations (MD) and magnetic wave equations (MW) on flat spaces were intensively studied in decades, for instance \cite{Ya,AaHaSa,EaGaSa1a,Ga,DG2,DaGa3a,Da1a}. In the fundamental work of Rodnianski, Schalg \cite{RaSa}, they showed that the Kato smoothing effects imply the non-endpoint Strichartz estimates for MS.  This idea was further developed to MS with large potential by Erdogan, Goldberg, Schlag \cite{EaGaSa1a} and MW, MD by D'Ancona, Fanelli \cite{DG2}. The endpoint Strichartz estimates for free wave and Schr\"odinger equations were obtained first by Keel, Tao \cite{KaTa}. With a key lemma of Ionescu, Kenig \cite{IaKa} whose proof is based on \cite{KaTa}, D'Ancona, Fanelli, Vega, Visciglia \cite{DaGa3a} obtained endpoint Strichartz estimates for MS in the small potential case. Strichartz estimates for free Schr\"odinger, wave and Klein Gordon equations on $\Bbb H^n$ were obtained by Tataru \cite{Taataaaraua4a}, Metcalfe, Taylor \cite{MaTa1a,MaTa2a}, Anker, Pierfelice, Vallarino \cite{AaPa,AaPa2a} and see Metcalfe, Tataru \cite{MaTa3a} for small perturbations of flat spacetimes. And the study of resolvent estimates, spectral measures, scattering, analytic continuation, degenerate elliptic operators, etc. on hyperbolic/asymptotic hyperbolic spaces has become an active field, see the works \cite{MBV,Vasy1,Vasy2,CH} for instance. The dispersive estimates of Schr\"odinger operators with electric potential on $\Bbb H^d$ were obtained by Borthwick, Marzuola \cite{BaMa} for $t\ge1$.

Our main theorems consist of two parts. The first result shows that the Kato smoothing effect estimates for magnetic half wave operators on $\Bbb H^2$ imply both the non-endpoint Strichartz and endpoint Strichartz estimates. Second, we prove the Kato smoothing effect estimates in the small potential case. Thus, by our first result the endpoint Strichartz estimates hold in the small potential case, which is useful for \cite{LaMaZa}. And we remark that for the special magnetic Schr\"odinger operator appearing in the study of wave maps from $\Bbb R\times\Bbb H^2$ to $\Bbb H^2$, the Kato smoothing effect can also be established for arbitrary large potentials, see \cite{Laiazaea}.

Let $\Bbb D$ denote the Poincare disk model for $\Bbb H^2$.
Let $r=d(x,O)$ be the geodesic distance between $x\in\Bbb D$ and the origin point $O$ in $\Bbb D$. Recall $V:=B(x)-d^*A$.
Our main theorems are as follows.
\begin{theorem}\label{2}
Suppose that $B,A$ satisfy for some $\varrho>0$
\begin{align}\label{Vab}
\|Ve^{r\varrho}\|_{L^{\infty}_x}+\|e^{r\varrho}|A|\|_{L^{\infty}_x}<\infty,
\end{align}
and the Schr\"odinger operator $H=-\Delta+V+X$ is strictly positive$^{\dag}$, i.e., there exists some positive constant $c>0$ such that the spectrum of $H$ in $L^2(\Bbb H^2,\Bbb C^2)$ is contained in $(c,\infty)$.
Assume further that for some $0<\alpha<2\varrho$, $H$ satisfies$^{\ddag}$
\begin{align}
\|H^{\frac{1}{2}}f\|_{L^2}&\lesssim \|Df\|_{L^2}+\|f\|_{L^2}\label{equi1}\\
\|Df\|_{L^2}&\lesssim \|H^{\frac{1}{2}}f\|_{L^2}+\|f\|_{L^2}\label{equi21}\\
\|e^{-r\alpha}\nabla f\|_{L^2}&\lesssim \|e^{-r\alpha}H^{\frac{1}{2}}f\|_{L^2}+\|e^{-r\alpha}f\|_{L^2}\label{stein23},
\end{align}
provided the right hand sides are finite.
Then if the Kato smoothing effect $\|e^{-\alpha r}e^{\pm it\sqrt{H}}f\|_{L^2_{t,x}}\lesssim \|f\|_{L^2_x}$  holds, we have the following endpoint Strichartz estimates for (\ref{1}): Let $u$ solve (\ref{1}), then for any $p\in(2,6)$
\begin{align*}
&{\left\| D^{\frac{1}{2}}u \right\|_{L_t^2L_x^{p}}} +{\left\| {{e^{-\alpha r}}\nabla u} \right\|_{L_t^2L_x^2}}+
{\left\| {{\partial _t}u} \right\|_{L_t^\infty L_x^2}} + {\left\| {\nabla u} \right\|_{L_t^\infty L_x^2}}\\
&\lesssim {\left\| {\nabla {u_0}} \right\|_{{L^2}}} + {\left\| {{u_1}} \right\|_{{L^2}}} + {\left\| F \right\|_{L_t^1L_x^2}}.
\end{align*}
\end{theorem}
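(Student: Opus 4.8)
The plan is to reduce the inhomogeneous estimate to the homogeneous one via the Duhamel formula, then to obtain the homogeneous estimate by combining the free Strichartz estimates on $\Bbb H^2$ with the assumed Kato smoothing bound through an iteration on the perturbative term $Wu$. Writing $v=(u,\partial_t u)$, the solution of \eqref{1} is $u(t)=\cos(t\sqrt H)u_0+\frac{\sin(t\sqrt H)}{\sqrt H}u_1+\int_0^t\frac{\sin((t-s)\sqrt H)}{\sqrt H}F(s)\,ds$, since the potential term $Wu$ has been absorbed into $H=-\Delta+V+X$; one uses the strict positivity of $H$ to make sense of $H^{-1/2}$ and $H^{\pm 1/2}$, and the norm equivalences \eqref{equi1}--\eqref{equi21} to pass between $\|\nabla u_0\|_{L^2}+\|u_1\|_{L^2}$ and $\|H^{1/2}u_0\|_{L^2}+\|u_1\|_{L^2}$. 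Thus it suffices to prove, for $g$ in the energy space of $\sqrt H$,
\begin{align*}
\bigl\|D^{\frac12}e^{\pm it\sqrt H}g\bigr\|_{L^2_tL^p_x}+\bigl\|e^{-\alpha r}\nabla e^{\pm it\sqrt H}g\bigr\|_{L^2_{t,x}}+\bigl\|e^{\pm it\sqrt H}g\bigr\|_{L^\infty_t\dot H^1}\lesssim\|\sqrt H\,g\|_{L^2_x},
\end{align*}
and then handle the Duhamel integral by the Christ--Kiselev lemma together with the dual/retarded version of the same bounds (here the range $p\in(2,6)$ corresponds precisely to the admissible exponents for the $2$D wave equation on $\Bbb H^2$, including the endpoint $L^2_tL^6_x$).

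The core step is the following perturbative identity: if $H_0=-\Delta$ denotes the free operator and $W=V+X$, then on the half-wave level one compares $e^{it\sqrt H}$ with $e^{it\sqrt{H_0}}$. I would instead argue directly at the level of the wave equation: set $w=u$ and write $\Box u+Wu=F$ as $\Box u=F-Wu$, treat $Wu$ as a source, and apply the \emph{free} endpoint Strichartz and (free) Kato smoothing estimates on $\Bbb H^2$ (available from the cited works of Metcalfe--Taylor and Anker--Pierfelice--Vallarino, and from the resolvent/limiting-absorption machinery on $\Bbb H^2$) to obtain
\begin{align*}
\|u\|_{S}\lesssim\|\nabla u_0\|_{L^2}+\|u_1\|_{L^2}+\|F\|_{L^1_tL^2_x}+\|e^{\alpha r}Wu\|_{L^2_tL^2_x},
\end{align*}
where $\|\cdot\|_S$ is the full norm on the left-hand side of the theorem. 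The weighted source term is controlled by the exponential decay hypothesis \eqref{Vab}: since $|V|,|A|\lesssim e^{-\varrho r}$ and $\alpha<2\varrho$, one has $|e^{\alpha r}Wu|\lesssim e^{(\alpha-\varrho) r}(|\nabla u|+|u|)=e^{-\alpha r}e^{(2\alpha-\varrho)\cdot(\cdots)}$ — more precisely $e^{\alpha r}|Wu|\lesssim e^{-\alpha r}\,e^{(2\alpha-\varrho)r}(|\nabla u|+|u|)$ with $2\alpha-\varrho<\alpha$ absorbable, so that $\|e^{\alpha r}Wu\|_{L^2_{t,x}}\lesssim\|e^{-\alpha r}\nabla u\|_{L^2_{t,x}}+\|e^{-\alpha r}u\|_{L^2_{t,x}}$. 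The first term on the right here is part of $\|u\|_S$ and will be absorbed; the second is handled by \eqref{stein23} applied to $u(t,\cdot)$ together with the assumed Kato bound $\|e^{-\alpha r}e^{\pm it\sqrt H}f\|_{L^2_{t,x}}\lesssim\|f\|_{L^2_x}$, which after Duhamel gives $\|e^{-\alpha r}u\|_{L^2_{t,x}}\lesssim\|\sqrt H u_0\|_{L^2}+\|u_1\|_{L^2}+\|F\|_{L^1_tL^2_x}$ directly — no smallness needed for this piece since it is the full magnetic propagator, not a Neumann series.

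To close the argument one must ensure the absorbed term $\|e^{-\alpha r}\nabla u\|_{L^2_{t,x}}$ is genuinely finite a priori, not merely bounded by itself; this is where one runs a standard approximation/bootstrap: prove the estimate first for smooth, compactly supported data (or truncate the potential to compact support) where every norm in $\|u\|_S$ is finite by local theory, absorb, then remove the truncation by the exponential-decay uniformity of the constants. I expect the main obstacle to be precisely this bootstrap bookkeeping together with justifying the free endpoint Strichartz estimate in the curved geometry with the exact weighted smoothing term in the admissible form needed — in particular verifying that the Christ--Kiselev lemma applies at the endpoint $L^2_tL^6_x$ (it does, since the time exponent $2$ is on the left and the source exponent $1$ is strictly smaller) and that the free Kato smoothing estimate on $\Bbb H^2$ holds with the weight $e^{-\alpha r}$ for all $\alpha>0$, which follows from the spectral gap of $-\Delta_{\Bbb H^2}$ at $\tfrac14$ and the exponential volume growth, giving even stronger smoothing than in the Euclidean case. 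Once these ingredients are in place, the inequalities \eqref{equi1}--\eqref{stein23} serve only to transfer between the $H$-adapted and the metric ($\nabla$, $D$) norms, and the theorem follows.
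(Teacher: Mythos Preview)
Your outline correctly identifies the overall architecture --- write $\Box u=F-Wu$, apply free estimates, and control the source $Wu$ in a weighted $L^2_{t,x}$ norm via the decay of $V,A$ and the Kato smoothing for $e^{\pm it\sqrt H}$ --- and this is indeed the route the paper takes. However, there is a genuine gap at the central step, and a secondary issue in how you close the loop.

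\textbf{The main gap.} To place $Wu$ in the weighted space $e^{-\alpha r}L^2_{t,x}$ as the source for the \emph{free} Duhamel term, you need the inhomogeneous estimate
\[
\Bigl\|D^{1/2}\int_0^t \frac{\sin((t-s)D)}{D}\,G(s)\,ds\Bigr\|_{L^2_tL^p_x}\lesssim \|e^{\alpha r}G\|_{L^2_tL^2_x}.
\]
Composing the free homogeneous Strichartz estimate with the \emph{dual} free smoothing gives only the version with $\int_{\Bbb R}$ in place of $\int_0^t$. To pass to the retarded integral you invoke Christ--Kiselev, but here both time exponents equal $2$, so the lemma does \emph{not} apply. (Your remark that ``the source exponent $1$ is strictly smaller'' refers to the $F$-term, not to $Wu$.) This retarded $L^2_t\to L^2_t$ endpoint is exactly the technical core of the paper: it is Lemma~\ref{woshishui}, proved by the Keel--Tao bilinear dyadic decomposition in $t-s$, combined with a high/low frequency split and the dispersive bounds of Lemma~\ref{wjiu87} and Corollary~\ref{hasonggu}. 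Without this ingredient your scheme does not close at the endpoint; it only recovers the non-endpoint case $p>2$ (the paper's Lemma~\ref{Smoothing} and Proposition~\ref{jiu9}).

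\textbf{The absorption issue.} You propose to absorb $\|e^{-\alpha r}\nabla u\|_{L^2_{t,x}}$ back into $\|u\|_S$, but there is no small constant available --- the hypothesis on $V,A$ is a finiteness assumption, not a smallness one, so a bootstrap cannot close. The paper avoids absorption entirely: Lemma~\ref{whishui} bounds $\|e^{-\alpha r}\nabla u\|_{L^2_{t,x}}$ \emph{directly} from the data by applying \eqref{stein23}, commuting $H^{1/2}$ through $e^{\pm it\sqrt H}$, and invoking the assumed Kato smoothing for the perturbed group. You in fact already sketch this mechanism for the zero-order term $\|e^{-\alpha r}u\|_{L^2_{t,x}}$; the same argument, with \eqref{stein23} in place of Poincar\'e, handles the gradient term as well and should replace the absorption step.
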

\noindent{\bf Remark 1.1} Generally (\ref{equi1})-(\ref{stein23}) hold if $H$ is a bounded perturbation of $-\Delta$. Even if $H$ has discrete spectrum, we can still expect (\ref{equi1})-(\ref{stein23}) to be right. But if one expects the exact equivalence without the zero order term $\|f\|_{L^2}$, the discrete spectrum of $H$ must be eliminated.

\noindent{\bf Remark 1.2}  If $H$ has discrete spectrum, the Kato smoothing estimates can only hold in the continuous spectrum part of $H$.

The following corollary will show that the Kato smoothing estimates hold for small potentials. Moreover, in the large potential case considered in \cite{Laiazaea}, we can prove the Kato smoothing estimates via choosing a suitable frame on the bundle $Q^*(T\Bbb H^2)$. In fact, the one form $A$ in (\ref{1}) indeed depends on the frame fixed on $Q^*(T\Bbb H^2)$. Then using the geometric setting of the Schr\"odinger operator $H$ and the Coulomb gauge, we can prove no discrete spectrum, no bottom resonance and no embedded eigenvalue exist, which are the enemies in the low frequency and mediate frequency. Moreover, the negative sectional curvature property of the target $\Bbb H^2$ is very important to make the electric potential part be a non-negative operator. Finally, the Kato smoothing effect follows by the decay estimates for the high frequency via choosing a suitable weight and energy arguments. See \cite{Laiazaea} for more details.

\footnotetext{$^\dag$ By Kato's perturbation theorem, (\ref{Vab}) shows $H$ is self-adjoint in $L^2$. See Lemma \ref{povxde45}}
\footnotetext{$^\ddag$ Since $H$ is self-adjoint, $H^{\frac{s}{2}}$ is defined by spectrum theorem. $D:=(-\Delta)^{\frac{1}{2}}$ and can be defined by the Fourier transform on hyperbolic spaces, see Section 2.}

As a corollary we have the endpoint Strichartz estimates for magnetic wave equations in the small potential case.
\begin{corollary}\label{3}
Suppose that the Schr\"odinger operator $H=-\Delta+V+X$ satisfy for some $\varrho>0$
\begin{align}\label{Vab1}
\|e^{r\varrho}V\|_{L^{\infty}}+\|V\|_{L^{2}}+\|e^{r\varrho}|A|\|_{L^{\infty}} +\|A\|_{L^{2}}\le\mu_1.
\end{align}
Assume  $0<\mu_1\ll 1$, $0<\alpha\ll 1$ and $0<\alpha<3\varrho$. If $u$ solves (\ref{1}), then for any $p\in(2,6)$, there holds
\begin{align*}
&{\left\| {{e^{-\alpha r}}\nabla u} \right\|_{{L_t^2L_x^2}}}+\left\| D^{\frac{1}{2}}{u} \right\|_{L_t^2L_x^{p}}+
{\left\| {{\partial _t}u} \right\|_{L_t^\infty L_x^2}} + {\left\| \nabla {u} \right\|_{L_t^\infty L_x^2}}\\
&\lesssim \left\| {\nabla {u_0}} \right\|_{L^2} + \left\| {{u_1}} \right\|_{L^2} + \left\| F \right\|_{L_t^1L_x^2}.
\end{align*}
And as a byproduct, for $s\in[0,\frac{1}{2}]$, $p\in[2,\infty)$, we have
\begin{align}\label{Vab23}
\|D^{2s}f\|_{L^{p}}\thicksim \|H^{{s}}f\|_{L^p}.
\end{align}
\end{corollary}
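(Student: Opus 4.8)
The plan is to deduce the Corollary from Theorem 1.1 in two stages: first verify that the smallness hypothesis (\ref{Vab1}) implies all the structural assumptions of Theorem 1.1 (strict positivity of $H$, the norm equivalences (\ref{equi1})--(\ref{stein23}), and the Kato smoothing bound for $e^{\pm it\sqrt H}$), and then invoke the conclusion of Theorem 1.1 directly; second, prove the $L^p$ norm equivalence (\ref{Vab23}) as a separate functional-calculus statement. For the first stage, strict positivity is immediate: since $-\Delta$ on $\Bbb H^2$ has spectrum $[\tfrac14,\infty)$ (the bottom of the $L^2$-spectrum being $(n-1)^2/4=\tfrac14$ for $n=2$), and since $\|V\|_{L^\infty}+\|V\|_{L^2}+\|A\|_{L^\infty}+\|A\|_{L^2}\lesssim\mu_1\ll1$, the potential perturbation $V+X$ has operator norm (relative to $-\Delta$ with small relative bound, using Sobolev embedding $H^1(\Bbb H^2)\hookrightarrow L^q$) that is $O(\mu_1)$, so $H=-\Delta+V+X\ge \tfrac14-C\mu_1>c>0$ in the form sense. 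The exponential decay hypothesis (\ref{Vab}) of Theorem 1.1 with the same $\varrho$ is contained in (\ref{Vab1}). The three norm equivalences (\ref{equi1})--(\ref{stein23}) are, as Remark 1.1 points out, standard for bounded perturbations of $-\Delta$; here I would prove them by writing $H^{1/2}f$ and $Df$ in terms of each other via the resolvent identity $H^{1/2}=D(1+D^{-2}(V+X))^{1/2}\cdot(\ldots)$, or more robustly by the quadratic-form comparison $\langle Hf,f\rangle\sim\langle -\Delta f,f\rangle+\|f\|_{L^2}^2$ (again using the small relative bound) together with the Heinz--Kato inequality to pass from form equivalence to equivalence of the square roots, and for the weighted estimate (\ref{stein23}) a commutator argument moving $e^{-r\alpha}$ past the operators, where the exponential decay of $V,A$ absorbs the error from $[\Delta,e^{-r\alpha}]$ since $\alpha<3\varrho$ leaves room.

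The Kato smoothing estimate $\|e^{-\alpha r}e^{\pm it\sqrt H}f\|_{L^2_{t,x}}\lesssim\|f\|_{L^2_x}$ is the genuine analytic input and I expect it to be the main obstacle. The strategy is the now-classical one (Rodnianski--Schlag, Erdogan--Goldberg--Schlag, D'Ancona--Fanelli--Vega--Visciglia): via the Kato--Yajima characterization, $\|e^{-\alpha r}e^{\pm it\sqrt H}f\|_{L^2_{t,x}}^2$ is controlled by the uniform (in the spectral parameter $\lambda$) bound $\sup_{\lambda>0}\big\|e^{-\alpha r}\,\mathrm{Im}\,(H-\lambda\mp i0)^{-1}\,e^{-\alpha r}\big\|_{L^2\to L^2}<\infty$ for the half-wave resolvent, equivalently a weighted resolvent estimate for $H$ itself. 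One then compares the perturbed resolvent $(H-z)^{-1}$ to the free resolvent $(-\Delta-z)^{-1}$ through the resolvent series/identity $(H-z)^{-1}=(-\Delta-z)^{-1}\big(1+(V+X)(-\Delta-z)^{-1}\big)^{-1}$, so the matter reduces to (i) the weighted free smoothing estimate $\sup_z\|e^{-\alpha r}(-\Delta-z)^{-1}e^{-\alpha r}\|<\infty$ on $\Bbb H^2$ — which follows from the known resolvent/limiting-absorption bounds on hyperbolic space, using that the spectral gap $[0,\tfrac14)$ and the exponential volume growth actually help (no low-frequency obstruction, unlike $\Bbb R^2$, because of the spectral gap) — and (ii) invertibility of $1+(V+X)(-\Delta-z)^{-1}$ uniformly in $z$, which is where $\mu_1\ll1$ enters: the first-order term $X=2h^{ij}A_i\partial_j$ costs one derivative, absorbed by the $\tfrac12$-smoothing gain of the free resolvent, and the small $L^\infty$-with-exponential-weight plus $L^2$ norms of $V,A$ make the operator norm $<1$, so a Neumann series converges.

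Finally, the byproduct (\ref{Vab23}): for $s\in[0,\tfrac12]$ and $p\in[2,\infty)$ I would establish $\|D^{2s}f\|_{L^p}\sim\|H^s f\|_{L^p}$ by interpolation between the endpoints $s=0$ (trivial) and $s=\tfrac12$, where $\|Df\|_{L^p}\sim\|H^{1/2}f\|_{L^p}$ must be proved directly. For the latter, write $H^{1/2}f$ via the integral representation $H^{1/2}=c\int_0^\infty (H+\sigma)^{-1}H\,\sigma^{-1/2}\,d\sigma$ (and similarly for $D^{1/2}$ with $-\Delta$), so the ratio $H^{1/2}D^{-1}$ becomes an operator built from $(H+\sigma)^{-1}(-\Delta+\sigma)\cdot(\text{error})$; the difference $H^{1/2}-D^{1/2}$ picks up the factor $V+X$ sandwiched between resolvents, which is $L^p$-bounded (uniformly down to small $\sigma$, using the spectral gap of $-\Delta$ on $\Bbb H^2$ so that $(-\Delta+\sigma)^{-1}$ is uniformly bounded on $L^p$, $1<p<\infty$, even as $\sigma\to0$) and small by (\ref{Vab1}), so $H^{1/2}D^{-1}=1+O(\mu_1)$ is invertible on $L^p$. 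The one subtlety is the $X$-term's derivative loss, handled again by the $D^{-1}$ on the other side together with the $L^p$-boundedness of the Riesz-type operator $\nabla(-\Delta+\sigma)^{-1/2}$ on $\Bbb H^2$; the smallness $\mu_1\ll1$ then closes the argument by a Neumann series in $B(L^p)$.
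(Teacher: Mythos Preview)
Your overall strategy matches the paper's exactly: verify the hypotheses of Theorem~1.1 (strict positivity, the equivalences (\ref{equi1})--(\ref{stein23}), Kato smoothing) under the smallness assumption (\ref{Vab1}), then invoke Theorem~1.1, and establish (\ref{Vab23}) separately. Your route to Kato smoothing---reduce via the Kato--Yajima criterion to uniform weighted resolvent bounds for $H$, then compare to the free resolvent through a Neumann series with norm $O(\mu_1)$---is precisely what the paper does in Lemmas~4.1--4.4. The main methodological difference lies in the norm equivalences. The paper uses one device throughout: Balakrishnan's formula $(-\Delta)^s-H^s=c(s)\int_0^\infty\lambda^s(\lambda-\Delta)^{-1}W(\lambda+H)^{-1}\,d\lambda$ (and its ``reversed'' variant), combined with $L^p$--$L^q$ and weighted-$L^2$ resolvent estimates on the half-line $\lambda>0$ (Lemma~4.5), to handle \emph{simultaneously} the $L^p$ equivalence (\ref{Vab23}) for all $s\in[0,\tfrac12]$ (Lemma~4.7) and the weighted estimate (\ref{stein23}) (Lemma~4.8). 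You instead mix tools: Heinz--Kato form comparison for (\ref{equi1})--(\ref{equi21}), a commutator argument for (\ref{stein23}), and the integral representation plus interpolation for (\ref{Vab23}). The Heinz--Kato route is fine for the unweighted $L^2$ statements. Your commutator sketch for (\ref{stein23}) is the weakest link: moving the weight $e^{-\alpha r}$ past the \emph{nonlocal} operator $H^{1/2}$ is not a first-order commutator computation, and making it precise essentially forces you back to an integral representation of $H^{1/2}$ plus weighted resolvent bounds---i.e.\ the paper's argument---and in addition you will need the weighted Riesz transform bound $\nabla D^{-1}\in\mathcal{B}(\rho^{-\alpha}L^2)$ (the paper's Lemma~4.6, proved via Kunze--Stein) to pass from $D$ to $\nabla$ in the weighted norm, which you do not mention. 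For (\ref{Vab23}) your endpoint-plus-interpolation plan is reasonable, but if you intend complex interpolation you would need $L^p$-bounded imaginary powers of $H$ and $-\Delta$, an extra input; the paper sidesteps this by running Balakrishnan directly for each $s\in(0,\tfrac12)$.
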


\noindent{\bf Remark 1.3} (\ref{Vab23}) is useful for studying the well-posedness and scattering of semilinear dispersive equations with magnetic potentials particularly because no chain rule and Leibnitz rule are available for magnetic Schr\"odinger operators $H$.

The key for the proof of Theorem \ref{2} is the weighted Morawetz estimate in Lemma \ref{whishui} and the endpoint weighted Strichartz estimate for free wave equations on $\Bbb H^2$ in Lemma \ref{woshishui}  inspired by \cite{IaKa,DaGa3a}.  The proof of Lemma \ref{woshishui} depends on the bilinear argument of \cite{KaTa}, complex interplation and the frequency decomposition. It is important that Theorem \ref{2} is essentially suitable to potentials of any size.  The key point to involve large potentials in Theorem 1.1 is the three advantages of the hyperbolic background compared with the flat case, i.e., the exotic Strichartz estimates for free wave equations on $\Bbb H^2$, the Kunze-Stein phenomenon, the exponential decay of the spherical functions.  In fact, as noticed by \cite{IaPaSa,AaPa2a}, Strichartz estiamtes of dispersive equations on hyperbolic spaces own more Strichartz pairs than the Euclidean one, see [Corollary 1.3,\cite{KaTa}] and  Lemma \ref{thing} below. For the $\Bbb H^2$ background studied here, an $L^2_t$-type Strichartz estimate is available, which is essential for Lemma \ref{woshishui} and unavailable in the $\Bbb R^2$ case.$^*$

\footnotetext{$^\star$ The Strichartz pair $(p,q)$ in the norm $L^p_tL^q_x$ for wave equations in $\Bbb R^2$ requires $p\ge4$ at leat, see [Corollary 1.3,\cite{KaTa}].}

The key for the proof of Corollary \ref{3} is to prove (\ref{equi1})-(\ref{stein23}).  The Kato smoothing effect in the small potential case is respectively easy. For (\ref{equi1})-(\ref{stein23}), we apply an almost equivalence technique of our previous paper \cite{Laiazaea1a} instead of the heat semigroup techniques usually used in the flat case. In fact, in the Euclidean case (\ref{equi1}) and (\ref{equi21}) are usually proved by Simon's heat semigroup method with Kato's strong Trotter formula, see \cite{Da1a,KaMa,Saiamaoana}. The convenience of our almost equivalence arguments here is that we need neither the commutation property nor the special structure of $H$, which seems to fail for the hyperbolic backgrounds  due to the non-vanishing connection coefficients. In fact, in our argument, it suffices to prove the $L^p$-$L^q$ and weighted $L^2$ resolvent estimates on the half-line $(-\infty,1/4)$. These resolvent estimates can be proved by carefully bounding the resolvent kernel and applying the Kunze-Stein phenomenon.  Moreover, due to the spectrum gap of $-\Delta$, we find this almost equivalence argument directly yields the exact equivalence.

{\bf Notation}
Let $D=(-\Delta)^{\frac{1}{2}}$. The square of $H$ is denoted by $H^{\frac{1}{2}}$ or $\sqrt{H}$. And denote the shifted derivative by $\widetilde{D}=(-\Delta-\frac{1}{4}+{{\kappa}}^2)^{\frac{1}{2}}$ with $\kappa>\frac{1}{2}$. $^\star$

The resolvent of an operator $L$ from one function space to the other is always denoted by $R_{L}(z)=(L-z)^{-1}$ for simplicity, for instance $$R_{H}(z)=(H-z)^{-1},\mbox{  } R_{D}(z)=(D-z)^{-1},\mbox{  }R_{\sqrt{H}}(z)=(\sqrt{H}-z)^{-1}.
$$
In order to coincide with the notions in \cite{BaMa}, we introduce $\mathcal{R}_0(s)$ defined by
\begin{align*}
\mathcal{R}_0(s)=(-\Delta-s(1-s))^{-1}.
\end{align*}
Notice that $s(1-s)$ ranges over all the complex plane if $s$ ranges over the half plane $\{\frac{1}{2}+z:\Re z\ge 0\}$. And if $s$ takes values in the critical line $\{\frac{1}{2}+i\lambda:\lambda\in\Bbb R\}$, then $s(1-s)$ lies in $[1/4,\infty)$ which is the continuous spectrum of $-\Delta$.
Similarly, we introduce the notations: $\mathcal{R}_{D}(z)=(D-z(1-z))^{-1}$, and
$$\mathcal{R}_{H}(z)=(H-z(1-z))^{-1},\mbox{ } \mathcal{R}_{\sqrt{H}}(z)=(\sqrt{H}-z(1-z))^{-1}.
$$

\footnotetext{$^\star$ Since \cite{AaPa,AaPa2a} also considered shifted wave equations, they introduce the operator $\widetilde{D}$ to eliminate the singularity at zero of the corresponding symbols when applying the Fourier transform. The coincidence of $\|D^s\cdot\|_{L^p_x}$ and $\|\widetilde{D}^s\cdot\|_{L^p_x}$ for $p\in(1,\infty)$ makes the use of $\widetilde{D}$ safe for the final estimates and beneficial due to the elimination of singularity at zero. In our case, since the symbol of $(-\Delta)^{-1}$ is $(\lambda^2+1/4)^{-1}$ and has no singularity, it is generally not necessary to introduce $\widetilde{D}$. We keep this notation for reader's convenience of contrasting these papers.}

Let $S(t)$ be any function defined in $\Bbb R$, the notation $S(t)\le t^{-\infty}$ introduced in \cite{AaPa} means for any positive integer $n$, there exists some constant $C(n)>0$ such that $S(t)\le C(n)|t|^{-n}$ when $|t|\to\infty$. Similarly, for any function $S(\cdot)$ defined on integers, $S(j)\le j^{-\infty}$ means for any positive integer $n$, there exists some constant $C(n)>0$ such that $S(j)\le C(n)|j|^{-n}$ when $|j|\to\infty$.

\section{\bf Preliminaries}
Some preliminaries on the geometric notions and the Fourier analysis on the hyperbolic planes are recalled in this section.
Most materials are standard and can be found in Helgason \cite{Haeala}, while some are folk and we will contain some proofs if necessary.

Let $\Bbb D=\{(x_1,x_2)\in\Bbb R^2:|x_1|^2+|x_2|^2<1\}$ be the Poincare model of the hyperbolic plane $\Bbb H^2$ with the metric tension
\begin{align*}
4\frac{dx^2_1+dx^2_2}{(1-|x_1|^2-|x_2|^2)^2}.
\end{align*}
In the polar coordinates $(r,\theta)$, the metric tension of $\Bbb D$ is $d r^2+\sinh^2 rd\theta^2$. The Laplace-Beltrami operator on $\Bbb D$ is
$$
\Delta=\partial^2_r+\coth r\partial_r+\sinh^{-2} r\partial_{\theta}.
$$
The spherical functions $\varphi_{\lambda}$ with $\lambda\in \Bbb C$ on $\Bbb D$  are normalized radial eigenfunctions of $\Delta$:
\begin{align}\label{ojiuyu}
\left\{
  \begin{array}{ll}
    \Delta \varphi_{\lambda}=-(\lambda^2+\frac{1}{4})\varphi_{\lambda}&   \\
    \varphi_{\lambda}(0)=1&
  \end{array}
\right.
\end{align}
For any $\lambda\in\Bbb R$, $r\ge0$, the spherical functions are of exponential decay:
$$
|\varphi_{\lambda}(r)|\le \varphi_{0}(r)\lesssim (1+r)e^{-\frac{r}{2}}.
$$
A horocycle for $\Bbb D$ is a circle in $\Bbb D$ tangential to the boundary $\Bbb B=\partial \Bbb D$. Given $b\in \Bbb B$ and $z\in\Bbb  D$, denote the horocycle through $b$ and $z$ by $\xi(z,b)$. Then we put
\begin{align*}
[z,b]={\rm distance\mbox{  } from\mbox{  } O \mbox{  }to\mbox{  } }\xi(z,b) \mbox{  }(&{\rm with\mbox{  } sign; \mbox{ } to\mbox{  } be\mbox{  } taken \mbox{  }negative}\\
&{\rm if \mbox{  }O \mbox{  }lies \mbox{  }inside}\mbox{  } \xi(z,b)).
\end{align*}
If $f$ is a complex-valued function on $\Bbb D$, the Fourier transform is defined by
\begin{align}
\widetilde{f}(\lambda,b)=\int_{\Bbb D}f(z)e^{(-i\lambda+1)[z,b]}dz,
\end{align}
for all $\lambda\in \Bbb C$, $b\in \Bbb B$, if this integral exists.
The inverse Fourier transform is defined by
\begin{align}
f(z)={\rm const.}\int^{\infty}_{0}\left(\int_{\Bbb B}\widetilde{f}(\lambda,b)e^{(i\lambda+1)[z,b]}db\right)|c(\lambda)|^{-2}d\lambda,
\end{align}
where $c(\lambda)$ is the Harish-Chandra's c-function. The Palancherel formula is
\begin{align}
\|f\|^2_{L^2}={\rm const.}\int^{\infty}_{0}\int_{\Bbb B}|\widetilde{f}(\lambda,b)|^2|c(\lambda)|^{-2} dbd\lambda.
\end{align}
Any function $m:\Bbb R\to \Bbb C$ induces a Fourier multiplier operator $m(-\Delta)$ by the formula $\widetilde{m(-\Delta)f}(\lambda,b)=m(\lambda^2+\frac{1}{4})\widetilde{f}(\lambda,b)$. Thus, the symbol of the operator $m(D)$ is $m(\sqrt{\lambda^2+\frac{1}{4}})$.
Consider the group
$$ {\bf SU}(1,1)=\left\{\left(
                  \begin{array}{cc}
                    a & b \\
                    \bar{b} & \bar{a }\\
                  \end{array}
                \right):|a|^2-|b|^2=1\right\}
$$
with the action on $\Bbb D$ defined by the map
$$
g:z\to\frac{az+b}{\bar{b}z+\bar{a}},\mbox{ } (z\in \Bbb D).
$$
Then we have the identification $\Bbb D={\bf SU}(1,1)/{\bf SO}(2)$.  Let $d\mu(g)$ denote the Haar measure on the group $G={\bf SU}(1,1)$, normalized by
\begin{align}\label{pok9ijn}
\int_{G}f(g\cdot O)d\mu(g)=\int_{\Bbb D}f(z)dz, \forall f\in C_c(\Bbb D),
\end{align}
where $O$ denotes the origin point in $\Bbb D$, $g\cdot O$ denotes the action of $g$ on $O$, and $dz$ denotes the volume form on $\Bbb D$.
For two functions $f_1,f_2$ defined on $\Bbb D$, the convolution denoted by $*$ is defined by
\begin{align}\label{yu78i}
(f_1*f_2)(z)=\int_{G}f_1(g\cdot O)f_2(g^{-1}\cdot z)d\mu(g).
\end{align}
If $k$ is a radial function, then $\widetilde{f*k}=\widetilde{f}\widetilde{k}$.
Since $G$ keeps the Riemannian structure of $\Bbb D$ and Haar measures are invariant under the group action, by (\ref{pok9ijn}) one has the same Young's convolution inequality as the Euclidean space. Furthermore, hyperbolic planes have the so-called Kunze-Stein phenomenon, see Lemma \ref{stein} below. The convolution operation $f_1*f_2$ has an equivalent form if one of $f_1,f_2$ is radial. In fact, suppose that $f_2(x)=f_2(d(x,O))$ is radial, then
\begin{align}
f_1*f_2(z)=\int_{\Bbb D}f_1(x)f_2(d(x,z))dx.\label{yudf78i2}
\end{align}
Indeed, since $f_2$ is radial, $f_2(g^{-1}\cdot z)=f_2(d(g^{-1}\cdot z,O))$. And since $G$ keeps the Riemannian structure of $\Bbb D$, we have
$d(g^{-1}\cdot z,O)=d(z,g\cdot O)$. Then (\ref{yu78i}) reduces to
\begin{align}\label{yu78i2}
(f_1*f_2)(z)=\int_{G}f_1(g\cdot O)f_2(d( z,g\cdot O))d\mu(g).
\end{align}
Considering $f_1(\cdot)f_2(d( z,\cdot))$ as a function in $\Bbb D$, we have (\ref{yudf78i2}) by (\ref{yu78i2}) and (\ref{pok9ijn}).  For any Fourier multiplier operator $m(-\Delta)$, we say the radial function $k(r)$ is the corresponding kernel if $m(-\Delta)f=f*k$. And by (\ref{yudf78i2}), the function $k(d(x,y))$ defined on $\Bbb D\times\Bbb D$ is exactly the Schwartz kernel of $m(-\Delta)$.

Let $\mathcal{H}^{k,p}(\Bbb D;\Bbb R^2)$ be the usual Sobolev space for scalar functions defined on manifolds, see for instance Hebey \cite{Haeabaeaya}. It is known that $C^{\infty}_c(\Bbb D;\Bbb R^2)$ is dense in $\mathcal{H}^{k,p}(\Bbb D;\Bbb R^2)$. We also recall the norm of $\mathcal{H}^{k,p}$:
$$
\|f\|^p_{\mathcal{H}^{k,p}}=\sum^k_{l=1}\|\nabla^l f\|^p_{L^p_x},
$$
where $\nabla^l f$ is the covariant derivative, $p\in[1,\infty)$. The fractional power Sobolev spaces denoted by $\mathcal H^{s,p}$ are defined by $\{f:D^sf\in L^p\}$. And it coincides with $\mathcal{H}^{k,p}$ if $s=k$ is an integer. ${\mathcal H}^{s,2}$ are usually written by $\mathcal{H}^s$
for simplicity.

We now recall the Sobolev inequalities.
\begin{lemma}\label{wusijue}
If $f\in C^{\infty}_c(\Bbb D;\Bbb R)$, then for $1<p<\infty,$ $p\le q\le \infty$, $1<r<2$, $r\le l<\infty$, $\alpha>1$ the following inequalities hold
\begin{align*}
&{\left\| f \right\|_{{L^2}}} \lesssim {\left\| {\nabla f} \right\|_{{L^2}}};  \mbox{  } {\left\| {\nabla f} \right\|_{{L^p}}} \sim{\left\| Df\right\|_{{L^p}}};
 \mbox{  }{\left\| f \right\|_{{L^l}}} \lesssim {\left\| {\nabla f} \right\|_{{L^r}}}\mbox{  }{\rm{when}}\mbox{  }\frac{1}{r} - \frac{1}{2} = \frac{1}{l};\\
&{\left\| f \right\|_{{L^\infty }}} \lesssim {\left\| {{{D}^{{\alpha }}}f} \right\|_{{L^2}}}.
 \end{align*}
\end{lemma}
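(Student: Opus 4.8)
The plan is to prove the four displayed inequalities separately; since $f\in C^\infty_c(\Bbb D;\Bbb R)$, every quantity below is finite and the manipulations are legitimate, and each inequality reduces to a standard feature of harmonic analysis on $\Bbb H^2$: the spectral gap of $-\Delta$, the $L^p$-boundedness of Riesz-type transforms, the Euclidean-type Sobolev embedding valid on manifolds of bounded geometry, and a Plancherel estimate of an explicit convolution kernel.

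For the Poincar\'e inequality $\|f\|_{L^2}\lesssim\|\nabla f\|_{L^2}$ I would use the spectral theorem: the $L^2(\Bbb H^2)$-spectrum of $-\Delta$ is $[\tfrac{1}{4},\infty)$, hence $\|\nabla f\|_{L^2}^2=\langle-\Delta f,f\rangle\ge\tfrac{1}{4}\|f\|_{L^2}^2$. For the equivalence $\|\nabla f\|_{L^p}\sim\|Df\|_{L^p}$, $1<p<\infty$, I would invoke the boundedness on $L^p(\Bbb H^2)$, for $1<p<\infty$, of the Riesz transform $\nabla(-\Delta)^{-1/2}$ and of the co-Riesz transform $(-\Delta)^{-1/2}d^\ast$ acting on one-forms (classical for rank-one symmetric spaces, resting on the multiplier/heat-kernel theory underlying the Fourier transform of Section 2; see Helgason \cite{Haeala}, Hebey \cite{Haeabaeaya} and the works of Lohou\'e, Strichartz and Anker cited therein). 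Then $\nabla f=(\nabla D^{-1})(Df)$ gives $\|\nabla f\|_{L^p}\lesssim\|Df\|_{L^p}$, while $Df=D^{-1}(-\Delta)f=(D^{-1}d^\ast)(df)$ together with $\|df\|_{L^p}=\|\nabla f\|_{L^p}$ gives the reverse estimate.

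For the Sobolev embedding $\|f\|_{L^l}\lesssim\|\nabla f\|_{L^r}$ with $\tfrac{1}{r}-\tfrac{1}{2}=\tfrac{1}{l}$ (so $1<r<2$, $l\in(2,\infty)$), I would cite that $\Bbb H^2$ is a complete manifold of bounded geometry with Ricci curvature bounded below, on which the full scale of Gagliardo--Nirenberg--Sobolev inequalities holds with the Euclidean exponents; see Hebey \cite{Haeabaeaya}. Alternatively this follows from the local (small-scale Euclidean) isoperimetric inequality on $\Bbb H^2$, the exponential volume growth only improving the embedding. Finally, for $\|f\|_{L^\infty}\lesssim\|D^\alpha f\|_{L^2}$ with $\alpha>1$, write $f=(-\Delta)^{-\alpha/2}(D^\alpha f)=(D^\alpha f)\ast k_\alpha$, where $k_\alpha$ is the radial kernel of the Fourier multiplier with symbol $(\lambda^2+\tfrac{1}{4})^{-\alpha/2}$. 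By the Plancherel formula on $\Bbb D$,
\[
\|k_\alpha\|_{L^2}^2=\mathrm{const.}\int_0^\infty\bigl(\lambda^2+\tfrac{1}{4}\bigr)^{-\alpha}\,|c(\lambda)|^{-2}\,d\lambda,
\]
and since for $\Bbb H^2$ the Plancherel density $|c(\lambda)|^{-2}$ is comparable to $\lambda^2$ as $\lambda\to0$ and to $\lambda$ as $\lambda\to\infty$, this integral converges exactly when $\alpha>1$, so $k_\alpha\in L^2(\Bbb D)$. Young's convolution inequality on $\Bbb D$ with exponents $(2,2,\infty)$ (valid as on $\Bbb R^2$, cf. the discussion after (\ref{pok9ijn})) then gives $\|f\|_{L^\infty}=\|(D^\alpha f)\ast k_\alpha\|_{L^\infty}\le\|k_\alpha\|_{L^2}\|D^\alpha f\|_{L^2}$.

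The one ingredient that is not elementary is the Riesz-transform equivalence in the second inequality, which rests on singular-integral theory on $\Bbb H^2$ rather than on a soft argument; but this is a well-documented fact, so in a self-contained treatment the only genuine computation is the short Plancherel estimate of $\|k_\alpha\|_{L^2}$ above, the remaining two inequalities being a one-line spectral-gap argument and a citation to the bounded-geometry Sobolev theory.
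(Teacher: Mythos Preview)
Your proposal is correct and in fact more detailed than the paper's own treatment, which simply refers the reader to \cite{IaPaSa,Haeabaeaya,Sataraia} and \cite{LaOaSa} without further argument. Your four-part sketch (spectral gap for Poincar\'e, $L^p$-boundedness of Riesz transforms on rank-one symmetric spaces, bounded-geometry Sobolev embeddings from Hebey, and the Plancherel computation showing $k_\alpha\in L^2$ for $\alpha>1$) is exactly the content underlying those citations, so there is no meaningful divergence in approach.
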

For the proof, we refer to \cite{IaPaSa,Haeabaeaya,Sataraia}, see also  \cite{LaOaSa}.

The dispersive estimates and Strichartz estimates for free wave equations on $\Bbb H^d$ were considered by \cite{Taataaaraua4a,MaTa1a,MaTa2a,AaPa,AaPa2a}.
Theorem 5.2 and Remark 5.5 of Anker, Pierfelice \cite{AaPa} obtained the Strichartz estimates for linear wave/Klein-Gordon equations. Recall $\widetilde D=(-\Delta-\frac{1}{4}+\kappa^2)^{\frac{1}{2}}$ for some $\kappa>\frac{1}{2}$.
\begin{lemma}[\cite{AaPa}]\label{thing}
Let $(p,q)$ and $(\tilde{p},\tilde{q})$ be two admissible couples, i.e.,
$$\left\{ {({p^{ - 1}},{q^{ - 1}}) \in (0,\frac{1}{2}] \times (0,\frac{1}{2}):\frac{1}{p} > \frac{1}{2}(\frac{1}{2} - \frac{1}{q})} \right\} \cup \left\{ {\left( {0,\frac{1}{2}} \right)} \right\},
$$
and similarly for $(\tilde{p},\tilde{q})$. Meanwhile assume that
$$\sigma  \ge \frac{3}{2}\left( {\frac{1}{2} - \frac{1}{q}} \right),\tilde \sigma  \ge \frac{3}{2}\left( {\frac{1}{2} - \frac{1}{{\tilde q}}} \right),
$$
then the solution $u$ to the linear wave equation
$$
\left\{ \begin{array}{l}
 \partial _t^2u - \Delta u = F \\
 u(0,x) = u_0(x),{\partial _t}\left| {_{t = 0}} \right.u(t,x) = u_1(x) \\
 \end{array} \right.
$$
satisfies the Strichartz estimate
$$ {\left\| {\widetilde D_x^{ - \sigma  + \frac{1}{2}}u} \right\|_{L_t^pL_x^q}} + {\left\| {\widetilde D_x^{ - \sigma  - \frac{1}{2}}{\partial _t}u} \right\|_{L_t^pL_x^q}} \lesssim {\left\| {\widetilde D_x^{\frac{1}{2}}u_0} \right\|_{{L^2}}} + {\left\| {\widetilde D_x^{ - \frac{1}{2}}u_1} \right\|_{{L^2}}} + {\left\| {\widetilde D_x^{\tilde \sigma  - \frac{1}{2}}F} \right\|_{L_t^{\tilde p'}L_x^{\tilde q'}}}.
$$
\end{lemma}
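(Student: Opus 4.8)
\textbf{Proof proposal for Lemma \ref{thing}.} The plan is to follow the route of Anker--Pierfelice \cite{AaPa}: first establish a fixed--time dispersive estimate for the half--wave propagator on $\Bbb H^2$, and then feed it into the abstract machinery of Keel--Tao \cite{KaTa}. Since the spectrum of $-\Delta$ on $\Bbb H^2$ is $[\tfrac14,\infty)$, the operator $D=\sqrt{-\Delta}$ has symbol $\sqrt{\lambda^2+\tfrac14}$, which is smooth and bounded below by $\tfrac12$; the shifted operator $\widetilde D$ differs from $D$ by a Fourier multiplier whose symbol is bounded above and below and is a Mihlin multiplier, so $\|\widetilde D^{s}f\|_{L^q}\sim\|D^{s}f\|_{L^q}$ for $q\in(1,\infty)$, and one may work with $e^{\pm itD}$ throughout, returning to $\widetilde D$ only at the end. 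Writing the solution of $\partial_t^2u-\Delta u=F$ through $\cos(tD)$, $D^{-1}\sin(tD)$ and the retarded Duhamel operator, and noting that $\partial_t u$ amounts to an extra factor $D$ (with $D\widetilde D^{-1}$ bounded on $L^q$), the lemma reduces to Strichartz bounds for $e^{\pm itD}$ together with the corresponding inhomogeneous bound.

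The core is the dispersive estimate. The Schwartz kernel of $e^{itD}\chi(D/N)$ is the radial function $k_{t,N}(r)=\mathrm{const.}\int_0^{\infty}e^{it\sqrt{\lambda^2+1/4}}\,\chi(\lambda/N)\,\varphi_{\lambda}(r)\,|c(\lambda)|^{-2}\,d\lambda$, with $\varphi_\lambda$ the spherical functions and $|c(\lambda)|^{-2}$ the Plancherel density. Inserting the Harish--Chandra expansion $\varphi_\lambda(r)=c(\lambda)e^{(i\lambda-1/2)r}+c(-\lambda)e^{(-i\lambda-1/2)r}+(\text{lower order in }r)$ for $r\gtrsim1$, and the local Euclidean--type expansion for $r\lesssim1$, turns $k_{t,N}$ into an oscillatory integral with phase $t\sqrt{\lambda^2+1/4}\pm\lambda r$ and amplitude of size $\sim\lambda\,|c(\lambda)|^{-1}e^{-r/2}$ on the band $\lambda\sim N$. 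A stationary--phase analysis in $\lambda$, uniform in $r$ and $N$, then bounds $\|e^{itD}\chi(D/N)\|_{L^1\to L^\infty}$ by $N^{2}(N|t|)^{-1/2}$ for $|t|\le1$ and by a power of $N$ times $|t|^{-3/2}$ for $|t|\ge1$ (low frequencies contributing only bounded factors). The decisive point is the $e^{-r/2}$ decay carried by the spherical functions: it converts the large--time rate to $|t|^{-3/2}$, which beats the Euclidean $|t|^{-1/2}$, is integrable at $t=\infty$, and is exactly what produces the enlarged (``exotic'') set of admissible pairs in the lemma, including the $L^2_t$ pairs and the extra point $(0,\tfrac12)$.

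Interpolating the trivial $L^2\to L^2$ bound of $e^{itD}$ with the $L^1\to L^\infty$ dispersive bound, and using the Kunze--Stein phenomenon on $\Bbb H^2$ to handle the off--diagonal convolutions $f\mapsto f*k_{t,N}$, one obtains $L^{q'}\to L^{q}$ decay for every $q$ in the admissible range. Plugging this into the Keel--Tao theorem yields the homogeneous estimate $\|\widetilde D^{-\sigma}e^{\pm itD}f\|_{L^p_tL^q_x}\lesssim\|f\|_{L^2}$ for $\sigma\ge\tfrac32(\tfrac12-\tfrac1q)$ and all non--endpoint admissible $(p,q)$; the endpoint and $L^2_t$ pairs follow from the Keel--Tao bilinear interpolation argument, whose summability hypothesis is supplied precisely by the $|t|^{-3/2}$ decay. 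Summing the dyadic pieces in $N$ (the condition $\sigma\ge\tfrac32(\tfrac12-\tfrac1q)$ making the $N$--series converge), assembling the $\cos/\sin$ and $\partial_t$ pieces, treating the retarded Duhamel operator by the Christ--Kiselev lemma off the endpoint and by the Keel--Tao inhomogeneous endpoint estimate on it, and finally replacing $D$ by $\widetilde D$ through the multiplier equivalence, completes the proof.

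The main obstacle is the dispersive estimate itself, that is, carrying out the stationary--phase analysis of $k_{t,N}$ uniformly in the geodesic radius $r$: one must patch the Harish--Chandra regime $r\gtrsim1$ with the near--origin regime $r\lesssim1$, control $\varphi_\lambda$ and $|c(\lambda)|^{-2}$ both as $\lambda\to0$ (where $c(\lambda)^{-1}$ degenerates and the two exponential terms nearly cancel) and as $\lambda\to\infty$, and verify that the stationary point, present only when $|r|<|t|$, loses no more than $|t|^{-1/2}$ while the non--stationary region is negligible. Once this kernel bound is in hand, the remainder is the standard, if lengthy, assembly via $TT^*$, the Kunze--Stein inequality, and the Keel--Tao theorem.
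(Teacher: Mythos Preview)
Your outline is a faithful sketch of the Anker--Pierfelice argument in \cite{AaPa}, which is exactly what the paper invokes: Lemma~\ref{thing} is not proved in the paper at all but is simply quoted from \cite[Theorem 5.2 and Remark 5.5]{AaPa}, so there is no independent proof to compare against. Your identification of the key steps---the pointwise kernel bound via the Harish--Chandra expansion and stationary phase, the $|t|^{-3/2}$ large-time decay coming from the $e^{-r/2}$ factor in $\varphi_0$, interpolation with Kunze--Stein, and the Keel--Tao machinery---matches the cited source, and your remark that the main technical burden is the uniform oscillatory-integral analysis of $k_{t,N}(r)$ across the regimes $r\lesssim 1$, $r\gtrsim 1$ and $\lambda\to 0,\infty$ is accurate.
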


\noindent{\bf Remark}
For all $\sigma\in \Bbb R$, $p\in(1,\infty)$, $\|\widetilde D^{\sigma}f\|_{p}$ is equivalent to $\|D^{\sigma}f\|_{p}$, see [Page 5618,\cite{AaPa2a}].

\section{Proof of Theorem 1.1}
\begin{lemma}\label{povxde45}
Let $B,A$ satisfy (\ref{Vab}), then $H$ is a self-adjoint operator in $L^2(\Bbb D;\Bbb C^2)$ with domain $D(H)=\mathcal{H}^2$.
\end{lemma}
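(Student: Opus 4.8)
The plan is to treat $H=-\Delta+W$, with $W:=V+X=B-d^*A+2h^{ij}A_i\partial_{x^j}$, as a Kato--Rellich perturbation of the Laplace--Beltrami operator on $L^2(\Bbb D;\Bbb C^2)$ (acting componentwise on $\Bbb C^2$-valued fields). First I would record the unperturbed facts. It is classical that on the complete manifold $\Bbb D$ the scalar Laplacian is essentially self-adjoint on $C^\infty_c$; the domain of its unique self-adjoint extension is $\{u\in L^2:\Delta u\in L^2\}$, and by the uniform interior elliptic estimate on the bounded-geometry (indeed homogeneous) space $\Bbb D$ one has $\|u\|_{\mathcal{H}^2}\lesssim\|\Delta u\|_{L^2}+\|u\|_{L^2}$, so this domain coincides with $\mathcal{H}^{2,2}=\mathcal{H}^2$ (cf.\ \cite{Haeabaeaya}). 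Passing componentwise to $\Bbb C^2$-valued fields, $-\Delta$ is self-adjoint on $L^2(\Bbb D;\Bbb C^2)$ with $D(-\Delta)=\mathcal{H}^2$.

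Next I would verify that $W$ is symmetric and infinitesimally $(-\Delta)$-bounded. For symmetry: $Wu=Bu+2(A,du)-(d^*A)u$ is formally symmetric in $L^2(\Bbb D;\Bbb C^2)$, as noted in the Introduction — on $C^\infty_c$ one integrates by parts with no boundary contribution, using that $B$ is real symmetric and each $A_i$ (hence $d^*A$) is real anti-symmetric, so that the integration by parts applied to the first-order term $2(A,du)$ produces exactly the term that, together with $-(d^*A)u$, makes the whole expression symmetric; once $W:\mathcal{H}^2\to L^2$ is seen to be bounded, symmetry extends to all of $\mathcal{H}^2$ by density of $C^\infty_c$. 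For the relative bound, since $e^{r\varrho}\ge1$, assumption (\ref{Vab}) gives $\|V\|_{L^\infty_x}<\infty$ and $\| |A| \|_{L^\infty_x}<\infty$; hence $\|Vu\|_{L^2}\lesssim\|u\|_{L^2}$, and by Cauchy--Schwarz for the metric pairing, $|Xu|=|2(A,du)|\le 2|A|\,|\nabla u|$ pointwise, so $\|Xu\|_{L^2}\lesssim\|\nabla u\|_{L^2}$. Combining this with
\[
\|\nabla u\|_{L^2}^2=\langle-\Delta u,u\rangle\le\|\Delta u\|_{L^2}\|u\|_{L^2}\le\delta\,\|\Delta u\|_{L^2}^2+\tfrac{1}{4\delta}\,\|u\|_{L^2}^2\qquad(\delta>0),
\]
valid for $u\in\mathcal{H}^2$, we obtain $\|Wu\|_{L^2}\le\varepsilon\|\Delta u\|_{L^2}+C_\varepsilon\|u\|_{L^2}$ for arbitrarily small $\varepsilon>0$; that is, $W$ is $(-\Delta)$-bounded with relative bound $0$.

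With these two facts the Kato--Rellich theorem yields immediately that $H=-\Delta+W$ is self-adjoint on $D(H)=D(-\Delta)=\mathcal{H}^2$ (indeed essentially self-adjoint on $C^\infty_c$), with no size restriction on $B$ or $A$ — consistent with the remark in the Introduction that Theorem \ref{2} accommodates potentials of arbitrary size. The only step that requires genuine care is the symmetry bookkeeping for the first-order part: one has to check that integrating $2(A,du)$ by parts against a test field on $C^\infty_c$ reproduces precisely the matrix-function $-(d^*A)$ in the right place (this uses the anti-symmetry of the $A_i$ and the Leibniz rule $\partial_j\langle u,w\rangle=\langle\partial_j u,w\rangle+\langle u,\partial_j w\rangle$ for the Hermitian inner product on $\Bbb C^2$) and that no boundary term survives on the non-compact $\Bbb D$ — both handled by working first on $C^\infty_c$ and then invoking the density of $C^\infty_c$ in $\mathcal{H}^2$. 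Everything else is a routine application of Kato--Rellich.
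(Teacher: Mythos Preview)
Your argument is correct. Both you and the paper reduce to Kato--Rellich after checking symmetry of $W$ and a relative-bound estimate, and the symmetry computation is essentially the same in spirit. The difference lies in how the relative bound is obtained: the paper splits $\Bbb D$ into a large ball $B_K$ and its complement, invokes the compact Sobolev embedding on $B_K$ to get $\|(V+X)f\|_{L^2(B_K)}\le\epsilon\|\Delta f\|_{L^2}+C(\epsilon,K)\|f\|_{L^2}$, and uses the exponential decay of $V,|A|$ outside $B_K$ together with the Poincar\'e inequality to control the exterior piece. You instead observe that (\ref{Vab}) already forces $V,|A|\in L^\infty$, so $\|Vu\|_{L^2}\lesssim\|u\|_{L^2}$ and $\|Xu\|_{L^2}\lesssim\|\nabla u\|_{L^2}$ directly, and then the elementary interpolation $\|\nabla u\|_{L^2}^2=\langle-\Delta u,u\rangle\le\delta\|\Delta u\|_{L^2}^2+(4\delta)^{-1}\|u\|_{L^2}^2$ gives relative bound $0$. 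Your route is shorter and avoids the local-compactness/decay splitting altogether; the paper's argument, on the other hand, is the template one would use if the coefficients were only locally bounded with decay at infinity rather than globally in $L^\infty$, so it is more robust to weaker pointwise hypotheses even though that extra generality is not needed here.
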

\begin{proof}
Since we will work with $\Bbb C^2$-valued functions, the operators $d,d^*$ are assumed to act on $\Omega^{p}(\Bbb D)\otimes\Bbb C^2$ and $\Omega^{p}(\Bbb D)\otimes gl(2,\Bbb C)$.
First, $-\Delta$ is self-adjoint in $L^2$ with domain $\mathcal{H}^2$ (see for instance \cite{Sataraia}). Second, $H$ is symmetric in $L^2(\Bbb D;\Bbb C^2)$ with domain $\mathcal{H}^2$ by integration by parts. In fact, denote the inner product in $\Bbb C^2$ by $\langle \cdot,\cdot\rangle$.
Given, $f,g\in C^{\infty}_c(\Bbb D:\Bbb C^2)$. Since $B$ is symmetric and real, it is obvious that
\begin{align}
\int_{\Bbb D}\langle Bf,g\rangle dz=\int_{\Bbb D}\langle f,Bg\rangle dz.
\end{align}
Since in the dimension two $d^*=-*d*$, one can check $d^*(Af)=(d^*A)f-(A,df)$ with $(\cdot,\cdot)$ denoting the metric on $\Omega^{1}(\Bbb D)$. Then one has $2(A,df)-(d^*A)f=-d^*(Af)+(A,df)$. Since $A$ is real and anti-symmetric, $\langle(A,df),{g}\rangle=- (df,Ag)_1$, where $(\cdot,\cdot)_1$ denotes the metric in $\Omega^{1}(\Bbb D)\otimes\Bbb C^2$. Thus by the above three identities and integration by parts,
\begin{align*}
&\int_{\Bbb D}2\langle (A,df),g\rangle-\langle (d^*A)f,g\rangle  dz
=\int_{\Bbb D}\langle (A,df),g\rangle dz-\int_{\Bbb D}\langle d^*(Af),g\rangle dz\\
&=-\int_{\Bbb D} (df,Ag)_1dz-\int_{\Bbb D}(Af,dg)_1dz
=-\int_{\Bbb D} \langle f,d^*(Ag)\rangle dz+\int_{\Bbb D}\langle f, (A,dg)\rangle dz\\
&=-\int_{\Bbb D}\langle f,(d^*A)g)\rangle dz+2\int_{\Bbb D}\langle f, (A,dg)\rangle dz.
\end{align*}
Third, $H$ is a compact perturbation of $-\Delta$: For any $\epsilon>0$ and $K>0$ to be determined later, the compactness of Sobolev embedding in bounded geodesic balls implies there exists some $C_1(\epsilon,K)$ such that
\begin{align}\label{p9uihbgy}
\|(V+X)f\|_{L^2(B_K)}\le \epsilon\|\Delta f\|_{L^2(B_K)}+C_1(\epsilon,K)\|f\|_{L^2(B_K)},
\end{align}
where $B_K$ denotes the geodesic ball with center $O$ of radius $K$.
By taking $K\gg1$, the exponential decay of $V$ and $|A|$ (see (\ref{Vab})) yields
\begin{align}\label{puihbgy}
\|(V+X)f\|_{L^2({\Bbb D}\backslash(B_K))}\lesssim e^{-\varrho K}\|f\|_{L^2}+e^{-\varrho K}\|\nabla f\|_{L^2}.
\end{align}
Thus the Poincare inequality $\|f\|_{L^2}+\|\nabla f\|_{L^2}\lesssim \|\Delta f\|_{L^2}$ for hyperbolic spaces with (\ref{p9uihbgy}), (\ref{puihbgy}) implies
\begin{align}
\|(V+X)f\|_{L^2(\Bbb D)}\lesssim (e^{-\varrho K}+\epsilon)\|\Delta f\|_{L^2}+C_1(\epsilon,K)\|f\|_{L^2}.
\end{align}
Let $K$ be sufficiently large, by Kato's perturbation theorem, $H$ is self-adjoint.
\end{proof}

Since $H$ is assumed to be positive, one can define its fractional power $H^{s}$  for any $s\in \Bbb R$ via the spectrum theorem.

For reader's convenience, we recall the following lemma of \cite{AaPa2a} whose proof is based on the Kunze-Stein phenomenon.
\begin{lemma}[Lemma 5.1,\cite{AaPa2a}]\label{stein}$^*$
There exists a constant $C>0$ so that for any radial function $h$ on $\Bbb D$, any $2\le m,k<\infty$ and $g\in L^{k'}(\Bbb D)$,
\begin{align*}
\| {g * h} \|_{{L^m}} \lesssim \left\| g \right\|_{L^{k'}}\left\{ \int_0^\infty  \sinh r(\varphi _0{(r)})^{P}|h(r)|^Q dr \right\}^{1/Q},
\end{align*}
where $P= \frac{2\min \{m,k\} }{{m + k}}$, $Q= \frac{{mk}}{{k + m}}$, and $\varphi _0$ is the spherical function defined in (\ref{ojiuyu}).
\end{lemma}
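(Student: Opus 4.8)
This is \cite[Lemma 5.1]{AaPa2a}; my plan is to derive it from the sharp Kunze--Stein inequality (the $L^2$ case), Young's inequality (the ``corner'' cases), and a two-step bilinear interpolation. Two observations organize the argument. First, $Q$ is exactly the Young exponent, $\tfrac1Q=\tfrac1m+\tfrac1k$, so that apart from the factor $\varphi_0^{P}$ the target bound is the ordinary convolution estimate $\|g*h\|_{L^m}\lesssim\|g\|_{L^{k'}}\|h\|_{L^Q(\Bbb D)}$, which holds on $\Bbb D$ by the discussion following (\ref{pok9ijn}). Second, the $h$-norm in the statement is the $L^Q$ norm with respect to the measure $\varphi_0(r)^{P}\sinh r\,dr$; in the interpolation below the exponents $k'$, $Q$ and the weight power $P$ all move affinely in the parameter, the weight being handled by the Stein--Weiss interpolation of weighted $L^Q$ spaces.

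I would start with the base case $m=k=2$, i.e. $\|g*h\|_{L^2}\lesssim\|g\|_{L^2}\int_0^\infty h(r)\varphi_0(r)\sinh r\,dr$, which is the sharp form of the Kunze--Stein phenomenon. Since $h$ is radial, $\widetilde{g*h}(\lambda,b)=\widetilde g(\lambda,b)\,\widetilde h(\lambda)$, so by the Plancherel formula $\|g*h\|_{L^2}\le\bigl(\sup_{\lambda\ge0}|\widetilde h(\lambda)|\bigr)\|g\|_{L^2}$, and for real $\lambda$
$$|\widetilde h(\lambda)|=\Bigl|\,c\!\int_0^\infty h(r)\varphi_\lambda(r)\sinh r\,dr\Bigr|\le c\!\int_0^\infty h(r)\,|\varphi_\lambda(r)|\,\sinh r\,dr\le c\!\int_0^\infty h(r)\,\varphi_0(r)\,\sinh r\,dr,$$
using $|\varphi_\lambda(r)|\le\varphi_0(r)$ for $\lambda\in\Bbb R$ (Section 2). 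The three corner cases $(m,k)=(2,\infty),(\infty,2),(\infty,\infty)$, where $P=0$ and $\varphi_0^{P}=1$, are the classical Young/Cauchy--Schwarz bounds $\|g*h\|_{L^m}\le\|g\|_{L^{k'}}\|h\|_{L^Q(\Bbb D)}$ with $\tfrac1Q=\tfrac1m+\tfrac1k$.

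Then I would interpolate in two steps. Step one: bilinear complex (Riesz--Thorin/Stein) interpolation between the base case $(2,2)$ and the corner $(2,\infty)$, with the $h$-slot interpolated between $L^1(\varphi_0\sinh r\,dr)$ and $L^2(\Bbb D)$ via Stein--Weiss and the $g$-slot between $L^2$ and $L^1$, yields the claimed inequality along the whole edge $m=2$, $k\in[2,\infty]$; a short check shows $\tfrac1Q=\tfrac12+\tfrac1k$ and $P=\tfrac4{2+k}$ emerge correctly. Step two: interpolating each of these edge estimates against the corner $(\infty,\infty)$ sweeps out the whole triangle $\{2\le m\le k<\infty\}$ (the interpolated pair $(\tfrac1m,\tfrac1k)=(1-\theta)(\tfrac12,\tfrac1k_\star)$ runs along radial segments from $(\tfrac12,\tfrac1{k_\star})$ to the origin as $\theta$ and $k_\star$ vary); the triangle $\{2\le k\le m<\infty\}$ is obtained symmetrically, starting from the edge $k=2$. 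Tracking $k'$, $Q$, $P$ through the two interpolations reproduces exactly $P=\tfrac{2\min\{m,k\}}{m+k}$, $Q=\tfrac{mk}{m+k}$.

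The analytic ingredients are all elementary; the only delicate point is the interpolation bookkeeping --- verifying that bilinear complex interpolation applies with the weighted $L^Q$ target in the $h$-slot, and that the three exponents $k'$, $Q$, $P$ track affinely so that the final constants land exactly on the stated $P$ and $Q$. That is where I expect essentially all of the (routine) effort to lie.
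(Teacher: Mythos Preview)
The paper does not give its own proof of this lemma: it is stated verbatim as \cite[Lemma~5.1]{AaPa2a} and used as a black box, so there is nothing in the paper to compare against.

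Your outline is the standard route to this inequality and is essentially correct. The $L^2$ base case via Plancherel and $|\varphi_\lambda|\le\varphi_0$ is the Herz majorant principle; the corners are Young; and bilinear Riesz--Thorin with Stein--Weiss handles the weighted $h$-slot. One small point: in the base case you should put the absolute value on $h$ before applying $|\varphi_\lambda|\le\varphi_0$, so that the bound reads $\int_0^\infty |h(r)|\varphi_0(r)\sinh r\,dr$ rather than $\int h\varphi_0\sinh r\,dr$; otherwise the chain of inequalities does not hold for general complex $h$. With that adjustment your sketch is fine.
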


\footnotetext{$^*$ The JDE version of [Lemma 5.1,\cite{AaPa2a}] has some misprints. And we take this Lemma from its arxiv version which is correct. }

The following weighted Strichartz estimate will be important to prove the endpoint Strichartz estimates. Its proof is an application of the smoothing effects. Recall $\rho(x)=e^{-d(x,0)}$.
\begin{lemma}\label{whishui}
Let $H$ satisfy assumptions in Theorem \ref{1}. Assume that $u$ solves
\begin{align}\label{google}
\left\{ \begin{array}{l}
\partial _t^2u + Hu =F \\
u(0,x) = {u_0},{\partial _t}u(0,x) = {u_1} \\
\end{array} \right.
\end{align}
Then we have
\begin{align}
{\left\| {{\rho ^{\alpha }}\nabla u} \right\|_{L_t^2L_x^2}} \lesssim  {\left\| F(t)\right\|_{L_t^1L_x^2}} + {\left\| {\nabla {u_0}} \right\|_{L_x^2}} + {\left\| {{u_1}} \right\|_{L_x^2}}.\label{gutu4}
\end{align}
\end{lemma}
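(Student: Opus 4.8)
The plan is to derive the weighted Strichartz estimate \eqref{gutu4} from the Kato smoothing effect hypothesis $\|e^{-\alpha r}e^{\pm it\sqrt H}f\|_{L^2_{t,x}}\lesssim\|f\|_{L^2_x}$ together with the weighted norm equivalences \eqref{equi1}--\eqref{stein23}. First I would reduce to the homogeneous case: by Duhamel's formula the solution of \eqref{google} is $u(t)=\cos(t\sqrt H)u_0+\frac{\sin(t\sqrt H)}{\sqrt H}u_1+\int_0^t\frac{\sin((t-s)\sqrt H)}{\sqrt H}F(s)\,ds$, so it suffices to bound $\|\rho^\alpha\nabla e^{\pm it\sqrt H}g\|_{L^2_tL^2_x}$ in terms of $\|\nabla g\|_{L^2}$ (for the $u_0$ term, with $g=u_0$) and $\|g\|_{L^2}$ (for the $u_1$ and $F$ terms, with $g=u_1$ or $g=F(s)$, after integrating the $L^1_s$ norm inside using Minkowski), because $\rho^\alpha=e^{-\alpha r}$ and $H$ is strictly positive so $\sqrt H^{-1}$ is bounded on $L^2$.

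The key step is to commute $\nabla$ past the weight and the propagator at the level of $L^2$ in a way that lets the smoothing hypothesis apply. The smoothing estimate as stated controls $\|e^{-\alpha r}e^{\pm it\sqrt H}g\|_{L^2_{t,x}}\lesssim\|g\|_{L^2}$; I want instead $\|e^{-\alpha r}\nabla e^{\pm it\sqrt H}g\|_{L^2_{t,x}}$. Using \eqref{stein23} pointwise in $t$ (with $f=e^{\pm it\sqrt H}g$) gives $\|e^{-\alpha r}\nabla e^{\pm it\sqrt H}g\|_{L^2_x}\lesssim\|e^{-\alpha r}H^{1/2}e^{\pm it\sqrt H}g\|_{L^2_x}+\|e^{-\alpha r}e^{\pm it\sqrt H}g\|_{L^2_x}$; since $H^{1/2}$ commutes with $e^{\pm it\sqrt H}$ (spectral calculus), $H^{1/2}e^{\pm it\sqrt H}g=e^{\pm it\sqrt H}(H^{1/2}g)$, so taking $L^2_t$ and applying the smoothing hypothesis twice (once to $g$, once to $H^{1/2}g$) yields $\|e^{-\alpha r}\nabla e^{\pm it\sqrt H}g\|_{L^2_{t,x}}\lesssim\|H^{1/2}g\|_{L^2}+\|g\|_{L^2}$. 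Finally \eqref{equi1} bounds $\|H^{1/2}g\|_{L^2}\lesssim\|Dg\|_{L^2}+\|g\|_{L^2}\sim\|\nabla g\|_{L^2}+\|g\|_{L^2}\lesssim\|\nabla g\|_{L^2}$ (by Lemma \ref{wusijue}, since $\|g\|_{L^2}\lesssim\|\nabla g\|_{L^2}$). Applying this with $g=u_0$ handles the data term; applying it with $g=u_1$, and using that strict positivity makes $\sqrt H^{-1}$ bounded so that $H^{1/2}(\sqrt H^{-1}u_1)=u_1$ has $L^2$ norm $\lesssim\|u_1\|_{L^2}$, handles the velocity term; and for the forcing term one writes $\rho^\alpha\nabla\int_0^t\frac{\sin((t-s)\sqrt H)}{\sqrt H}F(s)\,ds$, moves $\rho^\alpha\nabla$ inside, applies Minkowski in $s$ to pull $\|\cdot\|_{L^2_t}$ inside $\int_0^t ds$, and then uses the homogeneous bound on each $\frac{\sin((t-s)\sqrt H)}{\sqrt H}F(s)=\frac{1}{2i\sqrt H}(e^{i(t-s)\sqrt H}-e^{-i(t-s)\sqrt H})F(s)$ with $g=(\sqrt H)^{-1}F(s)$, noting $H^{1/2}g=F(s)$ so the right side is $\lesssim\|F(s)\|_{L^2}$ after integrating in $s$.

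The main obstacle I anticipate is the forcing term: the smoothing estimate is time-translation invariant (it bounds $e^{\pm it\sqrt H}$ with $t$ ranging over all of $\Bbb R$), but the Duhamel integral $\int_0^t$ is a retarded operator, so one must be slightly careful to replace $\int_0^t$ by $\int_{\Bbb R}\mathbf 1_{\{s<t\}}$ and argue that truncation in $s$ does not destroy the $L^2_t$ bound — this is the standard Christ--Kiselev type maneuver, but here it is even simpler because after Minkowski one never needs the retarded structure: for fixed $s$, $\mathbf 1_{\{t>s\}}e^{\pm i(t-s)\sqrt H}g$ has $L^2_t$ norm bounded by the full $L^2_t$ norm of $e^{\pm it\sqrt H}g$, which is controlled by the smoothing hypothesis. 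A secondary technical point is justifying the pointwise-in-$t$ application of \eqref{stein23} and the commutation $H^{1/2}e^{it\sqrt H}=e^{it\sqrt H}H^{1/2}$ on a dense class (say $g\in C_c^\infty$ or $g$ in the domain of $H$) and then passing to the limit; this is routine given Lemma \ref{povxde45} and the spectral theorem. Everything else is bookkeeping with the explicit Duhamel representation and the boundedness of $(\sqrt H)^{-1}$ coming from strict positivity of $H$.
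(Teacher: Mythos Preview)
Your proof is correct and follows essentially the same route as the paper: Duhamel representation, apply \eqref{stein23} pointwise in $t$ to trade $\rho^\alpha\nabla$ for $\rho^\alpha H^{1/2}$ plus $\rho^\alpha$, commute $H^{1/2}$ through the propagator, invoke the Kato smoothing hypothesis, and close using \eqref{equi1} together with the strict positivity of $H$ (so that $H^{-1/2}$ is bounded on $L^2$) and the Poincar\'e inequality. The only cosmetic difference is that the paper cites the Christ--Kiselev lemma to replace $\int_0^t$ by $\int_{\Bbb R}$ before Minkowski, whereas you observe directly that after Minkowski in $s$ the indicator $\mathbf 1_{\{t>s\}}$ only shrinks the $L^2_t$ norm; your version is slightly more elementary but the argument is the same.
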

\begin{proof}
The proof is an easy application of the Kato's smoothing effect of $e^{\pm it\sqrt{H}}$. In fact, by Duhamel principle,
$$
u(t) = \cos \left( {t\sqrt { H} } \right){u_0} + \frac{\sin \left( {t\sqrt { H} } \right)}{\sqrt{H}}{u_1} + \int_0^t {\frac{{\sin \left( {(t - s)\sqrt { H} } \right)}}{{\sqrt { H} }}} F(s)ds.
$$
By the Christ-Kiselev lemma, for the inhomogeneous term it suffices to prove
\begin{align*}
\left\|\int_{\Bbb R} \rho^{\alpha}\nabla H^{-\frac{1}{2}}\sin \left( {(t - s)\sqrt { H} } \right) F(s)ds\right\|_{L^2_tL^2_x}\lesssim \|F\|_{L^1_tL^2_x}.
\end{align*}
By (\ref{stein23}), the Kato's smoothing effect and Minkowski,
\begin{align}
&\left\|\int_{\Bbb R} \rho^{\alpha}\nabla H^{-\frac{1}{2}}\sin \left( {(t - s)\sqrt { H} } \right) F(s)ds\right\|_{L^2_tL^2_x}\nonumber\\
&\lesssim\int_{\Bbb R}\left\|\rho^{\alpha}\sin \left( {(t - s)\sqrt { H} } \right) F(s)\right\|_{L^2_tL^2_x}ds\nonumber\\
&+\int_{\Bbb R}\left\|\rho^{\alpha}H^{-\frac{1}{2}}\sin \left( {(t - s)\sqrt { H} } \right) F(s)\right\|_{L^2_tL^2_x}ds\nonumber\\
&\lesssim\int_{\Bbb R}\|F(s)\|_{L^2_x}ds+\int_{\Bbb R}\|H^{-\frac{1}{2}}F(s)\|_{L^2_x}ds.\label{po9nbgh}
\end{align}
Meanwhile, the strict positiveness of the self-adjoint operator $H$ and the spectrum theorem  imply
\begin{align}\label{poiunvfg}
\|H^{-\frac{1}{2}}F\|^2_{L^2_x}=\langle H^{-1}F,F\rangle\le c\|F\|^2_{L^2_x}.
\end{align}
Hence the estimates for the inhomogeneous term follow by (\ref{poiunvfg}) and (\ref{po9nbgh}).
Similarly, the two homogeneous terms are bounded by $\|\nabla u_0\|_{L^2}+\|u_1\|_{L^2}$ by applying (\ref{stein23}), Kato's smoothing effect for $e^{\pm it\sqrt{H}}$ and the standard Poincare inequality $\|f\|_{L^2}\lesssim \|\nabla f\|_{L^2}$.
\end{proof}

The proof of non-endpoint Strichartz estimates is quite standard.
The non-endpoint homogeneous Strichartz estimates are given below.
\begin{lemma}\label{Smoothing}.
Let $(p,q)$ be an admissible pair with $p>2$, then
\begin{align}
{\left\| D^{\frac{1}{2}}e^{ \pm it\sqrt { H}}f \right\|_{L_t^pL_x^q}} &\lesssim \left\| D f\right\|_{L_x^2}\label{smoothing}. \\
\left\|D^{\frac{1}{2}}e^{ \pm it\sqrt { H}}f \right\|_{L_t^pL_x^q} &\lesssim \left\| H^{\frac{1}{2}}f\right\|_{L_x^2}.\label{wavemap2}
\end{align}
\end{lemma}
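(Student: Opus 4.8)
The plan is to deduce Lemma \ref{Smoothing} from the weighted Morawetz estimate of Lemma \ref{whishui} together with the free Strichartz estimates of Lemma \ref{thing}, treating the potential term $Wu = Vu + Xu$ as an inhomogeneity. First I would write, using the Duhamel formula for the free wave equation on $\Bbb H^2$, that $v(t) := D^{\frac12}e^{\pm it\sqrt H}f$ solves $\partial_t^2 v - \Delta v = -D^{\frac12}W e^{\pm it\sqrt H}f$ with data $v(0) = D^{\frac12}f$, $\partial_t v(0) = \pm i D^{\frac12}\sqrt H f$. By Lemma \ref{thing} applied with the admissible pair $(p,q)$ (and the dual/energy pair on the right, taking $\tilde p = \infty$, $\tilde q = 2$, $\tilde\sigma = 0$), and using the equivalence $\|\widetilde D^s\cdot\|_p \sim \|D^s\cdot\|_p$, the $L_t^pL_x^q$ norm of $D^{\frac12}e^{\pm it\sqrt H}f$ is controlled by $\|Df\|_{L^2} + \|D^{\frac12}\sqrt H f\|_{\text{(lower order)}} + \|D^{-\frac12}W e^{\pm it\sqrt H}f\|_{L_t^1L_x^2}$; here I would absorb the low-frequency factors using the spectral gap of $-\Delta$ on $\Bbb H^2$ (so $D$ is bounded below) and the equivalences \eqref{equi1}, \eqref{equi21} to pass between $\|Df\|_{L^2}$ and $\|H^{\frac12}f\|_{L^2}$, which gives \eqref{smoothing} and \eqref{wavemap2} respectively.

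The key step is then estimating the forcing term $\|D^{-\frac12}W e^{\pm it\sqrt H}f\|_{L_t^1L_x^2}$. I would split $W = V + X$ into the zeroth-order matrix potential $V$ and the first-order magnetic term $X = 2h^{ij}A_i\partial_j$. Using $\|D^{-\frac12}g\|_{L^2} \lesssim \|g\|_{L_x^{q_0}}$ for a suitable $q_0 < 2$ by the Sobolev embedding in Lemma \ref{wusijue}, or more directly by Hölder against the exponential weight, I would bound $\|D^{-\frac12}V u\|_{L^2} \lesssim \|V u\|_{L^2} \lesssim \|V e^{\alpha r}\|_{L^\infty}\|e^{-\alpha r}u\|_{L^2}$ and, for the magnetic part, $\|D^{-\frac12}Xu\|_{L^2} \lesssim \|Xu\|_{L^{q_0}} \lesssim \||A|e^{\alpha r}\|_{L^\infty_?}\|e^{-\alpha r}\nabla u\|_{L^2}$ after distributing the weight (one factor $e^{\alpha r}$ kills the growth of $A^{-1}$ while the loss from passing $D^{-\frac12}$ from $L^2$ to $L^{q_0}$ costs only boundedly since $\varrho - \alpha > 0$ gives room). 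Integrating in $t$, the $L_t^1$ norm of these is controlled by $\|e^{-\alpha r}u\|_{L_t^2L_x^2} + \|e^{-\alpha r}\nabla u\|_{L_t^2L_x^2}$ up to a finite-time or duality bookkeeping, which is exactly what Lemma \ref{whishui} controls (with $\rho^\alpha = e^{-\alpha r}$) by $\|\nabla u_0\|_{L^2} + \|u_1\|_{L^2}$; applying it with $u_0 = f$, $u_1 = \pm i\sqrt H f$ and using \eqref{equi21}, \eqref{poiunvfg} closes the bound by $\|Df\|_{L^2}$ (resp. $\|H^{\frac12}f\|_{L^2}$).

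The main obstacle I anticipate is the bookkeeping needed to pass the Duhamel term through the non-endpoint Strichartz estimate without a closed-loop (fixed-point) argument: since here we are treating $v = D^{\frac12}e^{\pm it\sqrt H}f$, the forcing term involves the already-solved magnetic propagator rather than $v$ itself, so there is no genuine circularity, but one must be careful that the smoothing norm $\|e^{-\alpha r}\nabla e^{\pm it\sqrt H}f\|_{L_t^2L_x^2}$ appearing on the right is finite and bounded by the data, which is precisely Lemma \ref{whishui} applied to the homogeneous solution. A secondary technical point is choosing the auxiliary exponent $q_0$ and correctly distributing the exponential weights so that the loss $D^{-\frac12}: L^{q_0} \to L^2$ is compensated; this uses only that $\alpha < 2\varrho$ (resp. $\alpha < 3\varrho$ in the corollary's regime) so there is a strictly positive surplus $e^{(\varrho-\alpha)r}$ or $e^{(\varrho-\cdot)r}$ to spend. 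Finally, the homogeneous data terms $\|\widetilde D^{\frac12}v(0)\|_{L^2}$ and $\|\widetilde D^{-\frac12}\partial_t v(0)\|_{L^2}$ reduce, via the spectral gap and \eqref{equi1}–\eqref{equi21}, to $\|Df\|_{L^2}$ and $\|H^{\frac12}f\|_{L^2}$ as desired, completing both \eqref{smoothing} and \eqref{wavemap2}.
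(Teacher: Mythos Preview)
Your overall strategy---treat $Wu$ as a forcing for the free wave equation and control it via the weighted smoothing norm---is exactly the paper's approach. But there is a genuine gap in your inhomogeneous estimate. You propose to place the forcing in $L^1_tL^2_x$ (taking the dual pair $\tilde p=\infty$, $\tilde q=2$ in Lemma~\ref{thing}) and then write that ``the $L^1_t$ norm of these is controlled by $\|e^{-\alpha r}u\|_{L^2_tL^2_x}+\|e^{-\alpha r}\nabla u\|_{L^2_tL^2_x}$ up to a finite-time or duality bookkeeping.'' On the whole line this step fails: Cauchy--Schwarz in $t$ produces an infinite factor, and no Sobolev/H\"older redistribution in $x$ alone can repair the $L^1_t$ versus $L^2_t$ mismatch. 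The phrase ``duality bookkeeping'' does not name a mechanism that actually closes the estimate.

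The paper fills this gap as follows. After invoking Christ--Kiselev (legitimate since $p>2$) to replace $\int_0^t$ by $\int_{\Bbb R}$, it factors $e^{\pm i(t-s)D}=e^{\pm itD}e^{\mp isD}$ and applies the \emph{homogeneous} Strichartz estimate in $t$ together with the \emph{dual} Kato smoothing estimate for the free half-wave,
\[
\Bigl\|\int_{\Bbb R}e^{\mp isD}F(s)\,ds\Bigr\|_{L^2_x}\lesssim \|\rho^{-\alpha}F\|_{L^2_tL^2_x},
\]
to obtain
\[
\Bigl\|\int_{\Bbb R}D^{-1/2}e^{\pm i(t-s)D}Wu\,ds\Bigr\|_{L^p_tL^q_x}\lesssim \|\rho^{-\alpha}Wu\|_{L^2_tL^2_x}.
\]
This puts the forcing directly in weighted $L^2_tL^2_x$, which is exactly what the Kato smoothing for $e^{\pm it\sqrt H}$ (equivalently Lemma~\ref{whishui} and \eqref{stein23}) supplies. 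From there your remaining steps---splitting $W=V+X$, using the decay \eqref{Vab}, commuting $H^{1/2}$ through $e^{\pm it\sqrt H}$, and invoking \eqref{equi1}--\eqref{equi21}---go through as you outlined. The missing ingredient in your sketch is precisely this use of the dual free smoothing estimate in place of an $L^1_tL^2_x$ Duhamel norm.
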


\begin{proof}
We follow the framework of \cite{DG2}.
Recall that $W=V+X$ is the potential part of $H$.
Denote ${e^{it\sqrt { H} }}f=u$, then
\begin{align}\label{p9ij}
e^{it\sqrt { H}}f = \cos \left( {tD} \right)f + i\frac{\sin D}{D}\sqrt { H} f - \int_0^t \frac{\sin \left( {(t - s)D} \right)}{D} Wuds.
\end{align}
Lemma \ref{thing}, (\ref{equi1}) and Lemma \ref{wusijue} show
$${\left\| {\frac{\sin \left( {tD} \right)}{D^{\frac{1}{2}}}\sqrt {H} f} \right\|_{L_t^pL_x^q}} \lesssim {\left\| {\sqrt { H} f} \right\|_{L_x^2}} \lesssim {\left\| {D}f \right\|_{{L^2_x}}}.
$$
Thus the homogeneous estimate is done.
The rest is to handle the inhomogeneous term. As a preparation, we first prove
\begin{align}\label{9f}
{\left\| {\int_0^t {\frac{{\sin \left( {(t - s)D} \right)}}{{{{D}^{\frac{1}{2}}}}}} Wuds} \right\|_{L_t^pL_x^q}} \lesssim  {\left\| {{\rho ^\alpha }H^{\frac{1}{2}} u} \right\|_{L_t^2L_x^2}}+{\left\| {{\rho ^\alpha } u}\right\|_{L_t^2L_x^2}}.
\end{align}
Since $p>2$, by the Christ-Kiselev lemma, to verify (\ref{9f}) it suffices to prove
\begin{align}
\left\| \int_{\Bbb R} \frac{e^{\pm i(t - s)D}}{D^{\frac{1}{2}}} Wuds \right\|_{L_t^pL_x^q} \lesssim \left\| \rho ^\alpha H^{\frac{1}{2}}u \right\|_{L_t^2L_x^2}+\left\| {{\rho ^\alpha } u}\right\|_{L_t^2L_x^2}.
\end{align}
Recall the Kato smoothing effect for $e^{iD t}$: for any $g\in L^2$ there holds
$$
{\left\| {\rho ^\alpha }{e^{\pm it D}g}\right\|_{L_t^2L_x^2}} \lesssim {\left\| g \right\|_{L_x^2}}.
$$
The dual version is
\begin{align}\label{nji}
 {\left\| {\int_{\Bbb R} {{e^{ \mp i D}}} F(s)ds} \right\|_{L_x^2}} \lesssim {\left\| {{\rho ^{ - \alpha }}F} \right\|_{L_t^2L_x^2}}.
\end{align}
(\ref{nji}) and Lemma \ref{thing} give
\begin{align*}
{\left\| {\int_{\Bbb R} {{D}^{ - \frac{1}{2}}}{e^{\pm i(t - s)D} }Wuds} \right\|_{L_t^pL_x^q}} &\lesssim {\left\| {\int_{\Bbb R} {e^{ - isD}Wuds}} \right\|_{L_x^2}} \\
&\lesssim {\left\| {{\rho ^{-\alpha} }\left( {{V} + X} \right)u} \right\|_{L_t^2L_x^2}}.
\end{align*}
Thus by (\ref{Vab}) and (\ref{stein23}), one deduces
\begin{align*}
 &{\left\| {\int_{\Bbb R} {{{D}^{ - \frac{1}{2}}}{e^{\pm i (t - s)D}}}Wuds} \right\|_{L_t^pL_x^q}} \\
 &\lesssim \left( {{{\left\| {{V}{\rho ^{ - 2\alpha }}} \right\|}_{{L^\infty_x }}} + {{\left\| {|A|{\rho ^{ - 2\alpha }}} \right\|}_{{L^\infty_x }}}} \right)\left( {{{\left\| {{\rho ^\alpha }u} \right\|}_{L_t^2L_x^2}} + {{\left\| {{\rho ^\alpha }\nabla u} \right\|}_{L_t^2L_x^2}}} \right) \\
 &\lesssim {\left\| {{\rho ^\alpha }u} \right\|_{L_t^2L_x^2}} + {\left\| {{\rho ^\alpha }{{H}^{\frac{1}{2}}}u} \right\|_{L_t^2L_x^2}}.
\end{align*}
Hence (\ref{9f}) has been proved.
Since ${{{H}^{\frac{1}{2}}}}$ commute with $e^{\pm it\sqrt {H}}$, (\ref{9f}), the Kato's smoothing effect for  $e^{\pm it\sqrt{H}}$ and (\ref{equi1})-(\ref{equi21})  yield
\begin{align*}
&{\left\| {\int^t_{0} {{{ D}^{ - \frac{1}{2}}}{e^{\pm i(t - s)D}}} Wuds} \right\|_{L_t^pL_x^q}}\\
&\lesssim\left\| \rho ^\alpha H^{\frac{1}{2}}u \right\|_{L_t^2L_x^2}+\left\| {{\rho ^\alpha } u}\right\|_{L_t^2L_x^2}\\
&\lesssim {\left\| f \right\|_{L_x^2}} + {\left\| {{{H}^{\frac{1}{2}}}f} \right\|_{L_x^2}} \lesssim {\left\| {\nabla f} \right\|_{L_x^2}}.
\end{align*}
Thus we have obtained (\ref{smoothing}). (\ref{wavemap2}) follows by (\ref{smoothing}) and (\ref{poiunvfg}).
\end{proof}

\begin{proposition}\label{jiu9}
Let $H$ satisfy the assumptions in Theorem \ref{1}. Then we have the non-endpoint Strichartz estimates for the magnetic wave equation: If $u$ solves the equation
\begin{align}
\left\{ \begin{array}{l}
\partial _t^2u +Hu = F \\
u(0,x) = {u_0},{\partial _t}u(0,x) = {u_1} \\
\end{array} \right.
\end{align}
then it holds for any admissible pair $(p,q)$ with $p>2$, $q\in(2,6]$
\begin{align*}
&{\left\| D^{\frac{1}{2}}u \right\|_{L_t^pL_x^q}} + {\left\| D^{-\frac{1}{2}}\partial_tu \right\|_{L_t^pL_x^q}}+{\left\| {{\partial _t}u} \right\|_{L_t^\infty L_x^2}} + {\left\| {\nabla u} \right\|_{L_t^\infty L_x^2}} \\
&\lesssim {\left\| {\nabla {u_0}} \right\|_{{L^2}}} + {\left\| {{u_1}} \right\|_{{L^2}}}+{\left\| F \right\|_{L_t^1L_x^2}}.
\end{align*}
\end{proposition}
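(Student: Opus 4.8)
The plan is to treat (\ref{1}) as a perturbation of the free wave equation on $\Bbb H^2$ and run the familiar scheme that treats $Wu$ as an additional source (cf. \cite{RaSa,DG2}). Since $H=-\Delta+W$ with $W=V+X$, the solution $u$ of $\partial_t^2u+Hu=F$ also solves the free wave equation with source $F-Wu$, so Duhamel's formula yields
$$
u(t)=\cos(tD)u_0+\frac{\sin(tD)}{D}u_1+\int_0^t\frac{\sin((t-s)D)}{D}F(s)\,ds-\int_0^t\frac{\sin((t-s)D)}{D}Wu(s)\,ds .
$$
The building block for the dispersive norms is the free half-wave Strichartz estimate on $\Bbb H^2$: for every admissible pair $(p,q)$ with $q\in(2,6]$ one has $\|e^{\pm itD}h\|_{L^p_tL^q_x}\lesssim\|D^{1/2}h\|_{L^2_x}$. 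This follows from Lemma \ref{thing}: the admissibility index $\sigma=\tfrac32(\tfrac12-\tfrac1q)$ is $\le\tfrac12$ in this range, so one may use the exponent $\tfrac12$; moreover $\|\widetilde D^{s}\cdot\|_{L^q}\sim\|D^{s}\cdot\|_{L^q}$, and negative powers of $D$ are bounded on $L^2$ and on $L^q$ for $1<q<\infty$ thanks to the spectral gap $-\Delta\ge\tfrac14$ (the symbol of $D^{-1}$ being $(\lambda^2+\tfrac14)^{-1/2}$). The two $L^\infty_tL^2_x$ norms I would obtain separately, by a direct energy identity.

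For the three terms that do not involve $W$, applying $D^{1/2}$, respectively $D^{-1/2}\partial_t$, and then using the displayed half-wave estimate gives: the homogeneous terms are $\lesssim\|D^{1/2}D^{1/2}u_0\|_{L^2}+\|D^{1/2}D^{-1/2}u_1\|_{L^2}\sim\|\nabla u_0\|_{L^2}+\|u_1\|_{L^2}$; for the $F$-term, since $p>2$ the Christ--Kiselev lemma replaces $\int_0^t$ by $\int_{\Bbb R}$, after which factoring out $e^{\pm itD}$ and using the unitarity of $e^{\mp isD}$ on $L^2$ together with Minkowski's inequality gives $\lesssim\|D^{1/2}D^{-1/2}\int_{\Bbb R}e^{\mp isD}F(s)\,ds\|_{L^2}=\|\int_{\Bbb R}e^{\mp isD}F(s)\,ds\|_{L^2}\le\|F\|_{L^1_tL^2_x}$.

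The remaining, and genuinely delicate, term is the one carrying $Wu$. Here again $p>2$ lets Christ--Kiselev pass from $\int_0^t$ to $\int_{\Bbb R}$; bounding the output by an $L^2_x$-norm (the negative power of $D$ being $L^2$-bounded) and invoking the dual Kato smoothing estimate for the free half-wave propagators, $\|\int_{\Bbb R}e^{\mp isD}g(s)\,ds\|_{L^2}\lesssim\|\rho^{-\alpha}g\|_{L^2_tL^2_x}$ (the dual of $\|\rho^\alpha e^{\pm itD}g\|_{L^2_{t,x}}\lesssim\|g\|_{L^2}$, available as in the proof of Lemma \ref{Smoothing}; see (\ref{nji})), one is reduced to $\|\rho^{-\alpha}Wu\|_{L^2_tL^2_x}$. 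Writing $|\rho^{-\alpha}Wu|\lesssim(\rho^{-2\alpha}|V|)\,(\rho^\alpha|u|)+(\rho^{-2\alpha}|A|)\,(\rho^\alpha|\nabla u|)$ pointwise and using $\|\rho^{-2\alpha}V\|_{L^\infty_x}+\|\rho^{-2\alpha}|A|\|_{L^\infty_x}<\infty$ from (\ref{Vab}), this is $\lesssim\|\rho^\alpha u\|_{L^2_tL^2_x}+\|\rho^\alpha\nabla u\|_{L^2_tL^2_x}$. The gradient term is exactly Lemma \ref{whishui}, and the zeroth-order term follows by applying the Kato smoothing of $e^{\pm it\sqrt H}$ to the $\sqrt H$-propagator Duhamel formula for $u$, together with the strict-positivity bound $\|H^{-1/2}\cdot\|_{L^2}\lesssim\|\cdot\|_{L^2}$; both are $\lesssim\|\nabla u_0\|_{L^2}+\|u_1\|_{L^2}+\|F\|_{L^1_tL^2_x}$, which closes the estimate for $\|D^{1/2}u\|_{L^p_tL^q_x}+\|D^{-1/2}\partial_tu\|_{L^p_tL^q_x}$.

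Finally, for $\|\partial_tu\|_{L^\infty_tL^2_x}+\|\nabla u\|_{L^\infty_tL^2_x}$ set $E(t)=\|\partial_tu(t)\|_{L^2_x}^2+\langle Hu(t),u(t)\rangle$. Strict positivity of $H$ gives $\|u\|_{L^2}\lesssim\|H^{1/2}u\|_{L^2}$, so with (\ref{equi1})--(\ref{equi21}) and the Poincar\'e inequality $\|u\|_{L^2}\lesssim\|\nabla u\|_{L^2}$ one has $E(t)\sim\|\partial_tu(t)\|_{L^2_x}^2+\|\nabla u(t)\|_{L^2_x}^2$; differentiating in $t$ and using the self-adjointness of $H$, $\tfrac{d}{dt}E(t)=2\,\Re\langle F(t),\partial_tu(t)\rangle\le 2\|F(t)\|_{L^2}\sqrt{E(t)}$, whence $\sup_t\sqrt{E(t)}\lesssim\sqrt{E(0)}+\|F\|_{L^1_tL^2_x}\lesssim\|\nabla u_0\|_{L^2}+\|u_1\|_{L^2}+\|F\|_{L^1_tL^2_x}$. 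I expect the main obstacle to be the $Wu$-term of the third paragraph: one needs weighted $L^2_{t,x}$-smoothing of $\nabla u$ for the genuine solution of the magnetic equation, not merely for a free half-wave, and this is precisely the content of Lemma \ref{whishui} and the reason the Kato smoothing hypothesis of Theorem \ref{2} is the essential input; the rest is standard free-wave Strichartz bookkeeping, in fact easier on $\Bbb H^2$ than on $\Bbb R^2$ thanks to the availability of $L^2_t$-type admissible pairs.
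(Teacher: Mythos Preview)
Your argument is correct and uses the same ingredients as the paper (free Strichartz from Lemma~\ref{thing}, Christ--Kiselev, the dual Kato smoothing (\ref{nji}), the decay (\ref{Vab}), and the weighted smoothing Lemma~\ref{whishui}), but the packaging differs from the paper's proof. The paper first isolates the homogeneous half-wave bound $\|D^{1/2}e^{\pm it\sqrt H}f\|_{L^p_tL^q_x}\lesssim\|H^{1/2}f\|_{L^2}$ in Lemma~\ref{Smoothing} (proved via the free-$D$ Duhamel formula (\ref{p9ij}), exactly your third paragraph with $F=0$), and then derives Proposition~\ref{jiu9} by the $\sqrt H$-Duhamel formula together with Minkowski and Christ--Kiselev. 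You instead run the free-$D$ Duhamel formula once on the full inhomogeneous equation and treat $Wu$ as an extra source; this merges the two steps into one and bypasses the need to state the magnetic half-wave Strichartz separately. For the $L^\infty_tL^2_x$ norms you use the direct energy identity, which is a slight simplification over the paper's propagator route. One small remark: where you invoke Lemma~\ref{whishui} for $\|\rho^\alpha\nabla u\|_{L^2_{t,x}}$, the paper's proof of Lemma~\ref{Smoothing} instead passes through (\ref{stein23}) to replace $\nabla$ by $H^{1/2}$ and then commutes $H^{1/2}$ with $e^{\pm it\sqrt H}$; both amount to the same thing since Lemma~\ref{whishui} is itself proved via (\ref{stein23}) and Kato smoothing.
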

\begin{proof}
By Duhamel principle,
$$
u(t) = \cos \left( {t\sqrt { H} } \right){u_0} + \frac{1}{\sqrt { H}}\sin \left( {t\sqrt { H} } \right){u_1} + \int_0^t {\frac{{\sin \left( {(t - s)\sqrt { H} } \right)}}{{\sqrt { H} }}} F(s)ds.
$$
The homogenous estimates follow directly by Lemma \ref{Smoothing}, (\ref{equi1})-(\ref{equi21}), and the inequality $\|f\|_{L^2}\lesssim \|(-\Delta)^{s}f\|_{L^2}$ for any $s\in(0,1)$.
It remains to deal with the inhomogeneous term. By the Christ-Kiselev lemma, it suffices to prove
\begin{align*}
{\left\| {\int_{\Bbb R} {{D^{\frac{1}{2}}}{{H}^{ -\frac{1}{2}}}\sin \left( {(t - s)\sqrt { H} } \right)F(s)ds} } \right\|_{L_t^pL_x^q}} \lesssim {\left\| F \right\|_{L_t^1L_x^2}}.
\end{align*}
This is an immediate corollary of (\ref{equi1})-(\ref{equi21}) and (\ref{wavemap2}). In fact, we have by (\ref{wavemap2}) and Minkowski inequality
\begin{align*}
 &{\left\| {\int_{\Bbb R} {{D^{\frac{1}{2}}}{{H}^{ - \frac{1}{2}}}\sin \left( {(t - s)\sqrt { H} } \right)F(s)ds} } \right\|_{L_t^pL_x^q}} \\ &\lesssim\int_{\Bbb R} {{{\left\| {{D^{\frac{1}{2}}}{H^{ - \frac{1}{2}}}{e^{ \pm i(t - s){\sqrt{H}} }}}F(s)\right\|}_{L_t^pL_x^q}}} ds\\
 &\lesssim\int_{\Bbb R} \left\|H^{\frac{1}{2}}{H^{ - \frac{1}{2}}}{e^{ \pm is\sqrt{H}}}F(s) \right\|_{L_x^2} ds
 \lesssim\left\| F(s)\right\|_{L^1_sL_x^2}.
\end{align*}
The estimate of $\partial_tu$ is similar.
\end{proof}

Recall $\widetilde{D}=(-\Delta-\frac{1}{4}+{{\kappa}}^2)^{\frac{1}{2}}$ in Section 1. Let $\chi_{\infty}(r)$ be a cutoff function which equals one when $r\ge3/2$ and vanishes near zero. In the vertical strip $0\le \Re\sigma\le 3/2$, we define an analytic family of operators
\begin{align}\label{poui878}
\widetilde{W}^{\sigma,\infty}_t=\frac{e^{\sigma^2}}{\Gamma(\frac{3}{2}-\sigma)}\chi_{\infty}(D)\widetilde{D}^{-\sigma}e^{itD}
\end{align}
Denote its kernel by $\widetilde{w}^{\sigma,\infty}_t(r)$. It is easily seen that $\widetilde{W}^{\sigma,\infty}_t$ is the high frequency truncation of $
\widetilde{D}^{-\sigma}e^{itD}$. The Gamma function added to (\ref{poui878}) allows us to handle the boundary $\Re\sigma=\frac{3}{2}$ (see \cite{AaPa}). For $\sigma\in\Bbb R$, define the low frequency truncation of $\widetilde{D}^{-\sigma}e^{itD}$ to be
\begin{align}\label{poui87t}
{W}^{\sigma,0}_t=\widetilde{D}^{-\sigma}e^{itD}(I-\chi_{\infty}(D)).
\end{align}
Denote its kernel by $w^{\sigma,0}_t(r)$.
We collect some results from [Section 3, \cite{AaPa}] for reader's convenience.
\begin{lemma}[\cite{AaPa}]\label{wjiu87}
The kernel $w^{\sigma,0}_t(r)$ satisfies the point-wise estimates for $\sigma\in\Bbb R$, $|t|\ge2$
\begin{align}\label{hterxcfd3}
\left| {w_t^{\sigma,0}(r)} \right| \lesssim \left\{ \begin{array}{l}
 {\left| t \right|^{ - 3/2}}(1 + r){\varphi _0}(r),\mbox{  }0 \le r \le \frac{1}{2}\left| t \right| \\
 {\varphi _0}(r),\mbox{  }r \ge \frac{1}{2}\left| t \right| \\
 \end{array} \right.
\end{align}
And for $\sigma\in\Bbb C$ with $\Re \sigma=\frac{3}{2}$, the kernel $w^{\sigma,\infty}_t(r)$ satisfies
\begin{align}\label{hrxcfd3}
\left| {\widetilde{w}_t^{\sigma ,\infty }(r)} \right| \lesssim \left\{ \begin{array}{l}
 {\left| t \right|^{ - \infty }}{\varphi _0}(r),\mbox{  }0 \le r \le \frac{1}{2}\left| t \right| \\
 {e^{ - \frac{1}{2} r}}t,\mbox{  }r \ge \frac{1}{2}\left| t \right| \\
 \end{array} \right.
\end{align}
\end{lemma}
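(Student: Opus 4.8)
The plan is to derive both bounds from the spherical Fourier inversion formula on $\Bbb D$, which represents each kernel as a one-dimensional oscillatory integral in the spectral variable $\lambda$. Since $D$ has symbol $\sqrt{\lambda^2+\tfrac14}$ and $\widetilde D$ has symbol $\sqrt{\lambda^2+\kappa^2}$, one has
\[
w_t^{\sigma,0}(r)={\rm const.}\int_0^\infty e^{it\sqrt{\lambda^2+1/4}}\,\varphi_\lambda(r)\,(\lambda^2+\kappa^2)^{-\sigma/2}\,(1-\chi_\infty)(\sqrt{\lambda^2+1/4})\,|c(\lambda)|^{-2}\,d\lambda ,
\]
and the analogous formula for $\widetilde w_t^{\sigma,\infty}(r)$, with $\chi_\infty$ in place of $1-\chi_\infty$ and with the factor $\tfrac{e^{\sigma^2}}{\Gamma(3/2-\sigma)}$ in front. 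The ingredients I would use are: the Harish--Chandra expansion $\varphi_\lambda(r)=c(\lambda)\Phi_\lambda(r)+c(-\lambda)\Phi_{-\lambda}(r)$ with $\Phi_\lambda(r)=e^{(i\lambda-\frac12)r}\sum_{k\ge0}\Gamma_k(\lambda)e^{-2kr}$ (so that, for $r\ge1$, $|\Phi_{\pm\lambda}(r)|\lesssim e^{-r/2}$ with symbol-type $\lambda$-derivatives); the Plancherel asymptotics $|c(\lambda)|^{-2}\sim\lambda^2(1+\lambda)^{-1}$ with the symbol bounds $|\partial_\lambda^j c(\lambda)^{-1}|\lesssim_j(1+\lambda)^{1/2-j}$; and the uniform bounds $|\partial_\lambda^j\varphi_\lambda(r)|\lesssim_j(1+r)^j\varphi_0(r)$ valid for $\lambda$ in a bounded set.

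For the low-frequency kernel the amplitude is supported in a compact interval $\lambda\in[0,\Lambda]$, and $|t|\ge2$ throughout (which is why the sharp large-time rate $|t|^{-3/2}$ appears). When $r\ge\frac12|t|$ I would use only $|\varphi_\lambda(r)|\le\varphi_0(r)$ together with integrability of the amplitude, giving the bound $\varphi_0(r)$. When $0\le r\le\frac12|t|$ I would treat $r\le1$ by pure stationary phase in $\lambda$: the phase $t\sqrt{\lambda^2+\frac14}$ has its only critical point at $\lambda=0$, non-degenerate, while the amplitude $(\lambda^2+\kappa^2)^{-\sigma/2}|c(\lambda)|^{-2}$ vanishes there to second order, so the leading stationary-phase term carries $|t|^{-1/2}\cdot|t|^{-1}$. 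For $1\le r\le\frac12|t|$ I would insert the Harish--Chandra expansion, so that the phases become $t\sqrt{\lambda^2+\frac14}$ and $t\sqrt{\lambda^2+\frac14}\pm\lambda r$; the piece whose phase has no critical point is handled by repeated integration by parts (the $(1+r)$-losses being dominated by the $|t|$-gains since $r\le|t|/2$), while the piece with a critical point $\lambda_*\sim r/|t|$, where the amplitude is as small as $|c(\lambda_*)|^{-1}\sim\lambda_*$, contributes by stationary phase $|t|^{-1/2}\cdot(r/|t|)\cdot e^{-r/2}\lesssim|t|^{-3/2}(1+r)\varphi_0(r)$.

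For the high-frequency kernel I would first insert the Harish--Chandra expansion, so that $\varphi_\lambda(r)|c(\lambda)|^{-2}=\Phi_\lambda(r)/c(-\lambda)+\Phi_{-\lambda}(r)/c(\lambda)$ and the two phases are $t\sqrt{\lambda^2+\frac14}\pm\lambda r$; keeping the oscillatory factors $e^{\pm i\lambda r}$ inside the phase, the remaining amplitude has size $\lesssim e^{-r/2}\lambda^{-1}$ on the support $\lambda\gtrsim1$ (using $\Re\sigma=\tfrac32$ and $|c(\lambda)^{-1}|\sim\lambda^{1/2}$), with symbol-type $\lambda$-derivatives and \emph{no} $r$-growth. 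When $0\le r\le\frac12|t|$ the derivative of each phase, $t\lambda/\sqrt{\lambda^2+\frac14}\pm r$, stays $\gtrsim|t|$ on $\lambda\gtrsim1$, so unlimited integration by parts yields $|t|^{-N}e^{-r/2}$ for every $N$, hence $|t|^{-\infty}\varphi_0(r)$. When $r\ge\frac12|t|$ there can be a critical point near the light cone; there I would decompose dyadically in $\lambda$ and use integration by parts away from the critical dyadic block together with a van der Corput bound on it, keeping the $e^{-r/2}$ from $\Phi_{\pm\lambda}$, to reach the crude bound $\lesssim e^{-r/2}|t|$ (far from sharp, but $e^{-r/2}|t|$ is already super-exponentially small in $|t|$ on $r\ge|t|/2$).

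The main obstacle is the light-cone region $r\approx|t|$: there the phases $t\sqrt{\lambda^2+\frac14}\pm\lambda r$ genuinely degenerate, integration by parts fails, and one must run the stationary-phase/van der Corput analysis in which the $\widetilde D^{-\sigma}$-smoothing with $\Re\sigma=\tfrac32$ and the exact decay of $|c(\lambda)|^{-2}$ and $c(\lambda)^{-1}$ are both essential. A secondary but unavoidable nuisance is the bookkeeping of $|\partial_\lambda^j\varphi_\lambda(r)|\lesssim(1+r)^j\varphi_0(r)$: the integrations by parts must be organized so that, in the regime $r\le|t|/2$, every $(1+r)^j$-loss is absorbed by a matching $|t|^{-j}$-gain, leaving only the single advertised factor $(1+r)$ in the low-frequency estimate.
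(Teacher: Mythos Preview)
The paper does not give its own proof of this lemma: as the Remark immediately following the statement records, (\ref{hterxcfd3}) is quoted from Theorem~3.1 of \cite{AaPa} and (\ref{hrxcfd3}) is extracted from the proof of Theorem~3.2 there. Your sketch is precisely a reconstruction of the Anker--Pierfelice argument---spherical Fourier inversion, the Harish--Chandra expansion of $\varphi_\lambda$, and a stationary-phase/integration-by-parts dichotomy according to whether $r\le\tfrac12|t|$ or $r\ge\tfrac12|t|$---so it is correct and aligned with the source the paper cites.

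One small remark: in your low-frequency analysis for $r\le1$, note that $\varphi_\lambda(r)$ itself contributes to the amplitude and is even in $\lambda$, so the full amplitude vanishes to second order at $\lambda=0$ via $|c(\lambda)|^{-2}$; this is indeed what produces the extra $|t|^{-1}$ beyond the $|t|^{-1/2}$ from nondegenerate stationary phase, and you have this right. The bookkeeping you flag as a ``secondary nuisance''---absorbing the $(1+r)^j$ losses from $\partial_\lambda^j\varphi_\lambda$ into $|t|^{-j}$ gains when $r\le|t|/2$---is exactly how \cite{AaPa} organizes the argument, so there is no hidden obstacle there.
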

\begin{remark}
{\rm (\ref{hterxcfd3}) can be found in Theorem 3.1 of \cite{AaPa}. (\ref{hrxcfd3}) is contained in the proof of Theorem 3.2 in \cite{AaPa}}.
\end{remark}

Lemma \ref{wjiu87} has several corollaries.
\begin{corollary}\label{hasonggu}
Assume that $p\in(2,6)$, $q\in(2,6)$, then for $t\ge2$
\begin{align}\label{masu7}
{\left\| {f * w_t^{\sigma ,0}(r)} \right\|_{{L^p}}} \lesssim {t^{ - 3/2}}{\left\| f \right\|_{{L^{q'}}}}.
\end{align}
Moreover, for $\Re\sigma> 3(\frac{1}{2}-\frac{1}{p})$, $\Re\sigma>3(\frac{1}{2}-\frac{1}{q})$, $t\ge2$, one has
\begin{align}\label{6.13u}
{\left\| {f * {\widetilde{w}^{\sigma,\infty }_t}} \right\|_{{L^{p}}}} \lesssim {t^{ - \infty }}{\left\| f \right\|_{{L^{q'}}}}.
\end{align}
\end{corollary}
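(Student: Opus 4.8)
The plan is to use the pointwise kernel bounds from Lemma~\ref{wjiu87} together with the Kunze--Stein estimate (Lemma~\ref{stein}), splitting each kernel into its near-diagonal and far-diagonal pieces according to whether $r\le \frac12|t|$ or $r\ge\frac12|t|$. First I would write $f*w_t^{\sigma,0}=f*(w_t^{\sigma,0}\mathbf{1}_{r\le|t|/2})+f*(w_t^{\sigma,0}\mathbf{1}_{r\ge|t|/2})$ and estimate each term by Lemma~\ref{stein} with exponents chosen so that $P$ and $Q$ match the pair $(p,q)$: given $p,q\in(2,6)$ we may take $m=p$ and $k=q$ (both $\ge2$ and $<\infty$), so that we need to control $\bigl\{\int_0^\infty \sinh r\,(\varphi_0(r))^P |w_t^{\sigma,0}(r)|^Q\,dr\bigr\}^{1/Q}$ with $P=\frac{2\min\{p,q\}}{p+q}$ and $Q=\frac{pq}{p+q}$.

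For the near-diagonal piece I would insert the bound $|w_t^{\sigma,0}(r)|\lesssim |t|^{-3/2}(1+r)\varphi_0(r)$ and use $\varphi_0(r)\lesssim(1+r)e^{-r/2}$ and $\sinh r\lesssim e^{r}$, reducing the $r$-integral to $|t|^{-3Q/2}\int_0^\infty e^{r}(1+r)^{Q+PQ+PQ}\,e^{-(P+1)Qr/2}\cdots$ — more precisely the integrand decays like $e^{-(\frac{P+1}{2}Q-1)r}$ times a polynomial, and since $p,q<6$ forces $P>\tfrac23$ (hence $\frac{P+1}{2}>\frac56$) and $Q>1$, the exponent $\frac{P+1}{2}Q-1$ is strictly positive, so the integral converges and contributes an $O(1)$ constant; the $|t|^{-3/2}$ prefactor survives, giving the claimed $t^{-3/2}\|f\|_{L^{q'}}$. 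For the far-diagonal piece $r\ge\frac12|t|$ I would use $|w_t^{\sigma,0}(r)|\lesssim \varphi_0(r)\lesssim (1+r)e^{-r/2}$; the integral $\int_{|t|/2}^\infty \sinh r\,(\varphi_0(r))^{P+Q}\,dr$ is a tail of a convergent integral (same positivity of the exponent as above, now with $P+Q$ in place of $P$, which only helps), so it is bounded by $e^{-c|t|}$ for some $c>0$, which is certainly $\lesssim |t|^{-3/2}$ for $|t|\ge2$. Combining the two pieces yields \eqref{masu7}.

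For \eqref{6.13u} the argument is the same in structure but now using \eqref{hrxcfd3}: the near-diagonal piece carries the factor $|t|^{-\infty}$ and the $r$-integral of $\sinh r\,(\varphi_0(r))^{P+Q}$ is again a finite constant (convergence needs only $P>0$, guaranteed by $p,q\in(2,6)$; actually $P>\tfrac23$), so this piece is $\lesssim |t|^{-\infty}\|f\|_{L^{q'}}$. For the far-diagonal piece we use $|\widetilde w_t^{\sigma,\infty}(r)|\lesssim t\,e^{-r/2}$ on $r\ge\frac12|t|$, so the $r$-integral becomes $t^Q\int_{|t|/2}^\infty \sinh r\,(\varphi_0(r))^P e^{-Qr/2}\,dr$; with $\varphi_0(r)\lesssim(1+r)e^{-r/2}$ and $\sinh r\lesssim e^r$ the integrand is a polynomial times $e^{-(\frac{P+Q}{2}-1)r}$, and since $P>\tfrac23$, $Q>1$ the exponent is positive, so the tail is $\lesssim e^{-c|t|}$; multiplying by $t^Q$ and taking the $Q$-th root still leaves something $\lesssim |t|^{-\infty}$. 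Note the hypotheses $\Re\sigma>3(\tfrac12-\tfrac1p)$ and $\Re\sigma>3(\tfrac12-\tfrac1q)$ are not needed here because we only invoke the $\Re\sigma=\frac32$ row of \eqref{hrxcfd3}, whose bound is already uniform; these conditions are recorded because they are what one obtains after interpolating the $\Re\sigma=0$ and $\Re\sigma=\frac32$ endpoints in the later complex-interpolation step, so I would simply remark that \eqref{6.13u} as stated follows a fortiori. The main obstacle is bookkeeping: one must verify that the positivity of the exponent $\frac{P+1}{2}Q-1$ (resp.\ $\frac{P+Q}{2}-1$) genuinely follows from $p,q\in(2,6)$ rather than a larger range, since it is precisely the open condition $q<6$ (equivalently $\sigma<\tfrac32$ being attainable) that makes the $L^2_t$-type exponent usable — this is the ``exotic'' feature of $\Bbb H^2$ highlighted in the introduction, and getting the arithmetic of $P,Q$ exactly right at the endpoint is where care is required.
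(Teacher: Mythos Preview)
Your argument for \eqref{masu7} follows the paper's approach (split near/far diagonal and apply Lemma~\ref{stein}), but the arithmetic is off: it is not true that $p,q\in(2,6)$ forces $P>\tfrac23$ (take $p=2^+$, $q=6^-$, giving $P\approx\tfrac12$), and your decay exponent $\frac{P+1}{2}Q-1$ should read $\frac{P+Q}{2}-1$. The correct convergence condition is $P+Q>2$, which is equivalent to $\min\{p,q\}>2$ and does hold, so this part is salvageable.

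The real gap is in \eqref{6.13u}. Lemma~\ref{wjiu87} gives the kernel bound \eqref{hrxcfd3} \emph{only} on the line $\Re\sigma=\tfrac32$, so your direct Kunze--Stein argument proves \eqref{6.13u} only there. The corollary, however, asserts the estimate for all $\sigma$ with $\Re\sigma>3(\tfrac12-\tfrac1p)$ and $\Re\sigma>3(\tfrac12-\tfrac1q)$ --- a genuinely larger set --- and it is invoked later (Step~2.4.1 of Lemma~\ref{woshishui} with $\sigma=1$, and case~(c) of Lemma~\ref{phb9bv}) for values of $\sigma$ strictly inside the strip. Your remark that the general case ``follows a fortiori'' is backwards: proving an estimate at one value of $\Re\sigma$ does not imply it at smaller $\Re\sigma$, where $\widetilde D^{-\sigma}$ is less smoothing and no pointwise kernel bound is available. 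The paper closes this gap by complex interpolation: at $\Re\sigma=0$ one has the trivial $L^2\to L^2$ bound $\|f*\widetilde w^{\sigma,\infty}_t\|_{L^2}\lesssim\|f\|_{L^2}$ (with no decay in $t$), and at $\Re\sigma=\tfrac32$ the Kunze--Stein argument gives $L^{m'}\to L^k$ with $t^{-\infty}$ decay for \emph{all} $k\in[2,\infty)$, $m\in(2,\infty)$ --- not just $(2,6)$, a range you would also need for the interpolation to reach the full target set. Stein interpolation between these two lines then produces exactly the constraints $\Re\sigma>3(\tfrac12-\tfrac1p)$, $\Re\sigma>3(\tfrac12-\tfrac1q)$; they are not bookkeeping for a later step but the actual output of the proof of the corollary itself.
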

\begin{proof}
First, we prove the $w^{\sigma,0}_t(r)$ part in (\ref{masu7}).  When $r \le \frac{1}{2}|t|$ the desired estimate in (\ref{masu7}) follows by
applying Lemma \ref{stein} and (\ref{hterxcfd3}). When $r \ge \frac{1}{2}|t|$, since ${\varphi _0}(r)\lesssim (1+r)e^{-\frac{r}{2}}$, choosing arbitrary $0<\epsilon\ll1$, we obtain
$$ {\varphi _0}(r)\lesssim t^{-\frac{3}{2}}e^{-\frac{1-\epsilon}{2}r}.
$$
Then the desired estimate in (\ref{masu7}) follows by applying Lemma \ref{stein} as well. \\
Second, we deal with the $\widetilde{w}^{\sigma,\infty}_t(r)$ part in (\ref{6.13u}). This is achieved by interpolation. In fact, for $\Re\sigma=0$
\begin{align}\label{masu9}
{\left\| f * \widetilde{w}^{\sigma,\infty }_t \right\|_{{L^2}}} \le C {\left\| f \right\|_{{L^2}}}.
\end{align}
For $\Re\sigma=3/2$, as in the first step, (\ref{hrxcfd3}) with Lemma \ref{stein} yields for any $2<m<\infty$, $k\in[2,\infty)$
\begin{align}\label{masu10}
{\left\| f * \widetilde{w}^{\sigma,\infty  }_t \right\|_{{L^k}}} \le C{t^{ - \infty }}{\left\| f \right\|_{{L^{m'}}}}.
\end{align}
Interpolating (\ref{masu9}) with (\ref{masu10}), for $\frac{1}{p}=\frac{\theta}{2}+\frac{1-\theta}{k}$ $\frac{1}{q'}=\frac{\theta}{2}+\frac{1-\theta}{m'}$, and $\Re\sigma=\frac{3}{2}(1-\theta)$, we have
\begin{align*}
{\left\| {f * \widetilde{w}^{\sigma,\infty  }_t} \right\|_{{L^p}}} \le C{t^{ - \infty }}{\left\| f \right\|_{{L^{q'}}}}.
\end{align*}
In this case, by checking the relations $k=\frac{2}{3}\sigma/(\frac{1}{p}-\frac{1}{2}+\frac{1}{3}\sigma)$, $m'=\frac{2}{3}\sigma/(\frac{1}{q'}-\frac{1}{2}+\frac{1}{3}\sigma)$, and $2<m<\infty$, $k\in[2,\infty)$, we conclude
that for $\Re\sigma> 3(\frac{1}{2}-\frac{1}{p})$, $\Re\sigma> 3(\frac{1}{2}-\frac{1}{q})$, $p\in(2,6)$, $q\in(2,6)$, there holds
\begin{align*}
\|f*\widetilde{w}^{\sigma,\infty  }_t\|_{L^p}\le  C(p,q)t^{-\infty}\|f\|_{L^{q'}}.
\end{align*}
\end{proof}

The analogy of the following lemma for Schr\"odinger operators in the Euclidean spaces is obtained by  \cite{IaKa} which plays an important role in the endpoint argument of \cite{DaGa3a}.
\begin{lemma} \label{woshishui}
Let $u$ solve the linear wave equation
\begin{align}\label{google2}
\left\{ \begin{array}{l}
\partial _t^2u - {\Delta}u =F \\
u(0,x) = 0,{\partial _t}u(0,x) = 0 \\
\end{array} \right.
\end{align}
Let $\alpha>0$, then for $q\in(2,6)$
\begin{align}\label{ky6tfcser}
{\left\| {{{D}^{\frac{1}{2}}}u} \right\|_{L_t^2L_x^q}} \le {\left\| {{\rho ^{ - \alpha }}F} \right\|_{L_t^2L_x^2}}.
\end{align}
\end{lemma}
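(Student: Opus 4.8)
The plan is to run a Keel--Tao-type bilinear argument for the retarded operator. By Duhamel, $D^{\frac12}u(t)=\frac1{2i}\int_0^t\big(D^{-\frac12}e^{i(t-s)D}-D^{-\frac12}e^{-i(t-s)D}\big)F(s)\,ds$, so it suffices to bound $\mathcal TF(t):=\int_0^tD^{-\frac12}e^{i(t-s)D}F(s)\,ds$ in $L^2_tL^q_x$ by $\|\rho^{-\alpha}F\|_{L^2_tL^2_x}$ (the other sign being identical), and I would do this by testing against $G\in L^2_tL^{q'}_x$ and estimating $\mathcal B(F,G)=\iint_{0<s<t}\langle D^{-\frac12}e^{i(t-s)D}F(s),G(t)\rangle_{L^2_x}\,ds\,dt$. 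The region $\{0<s<t\}$ splits into the near-diagonal $\{t-s\le2\}$ and the dyadic shells $\{t-s\sim 2^j\}_{j\ge1}$.

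\textbf{Near-diagonal.} On $\{t-s\le2\}$ a Cauchy--Schwarz in $s$ over an interval of length $\le2$, followed by Fubini, reduces matters to the fixed-$s$ estimate $\|D^{-\frac12}e^{i\tau D}h\|_{L^2_\tau L^q_x}\lesssim\|h\|_{L^2_x}$. This is exactly Lemma~\ref{thing} for the admissible couple $(2,q)$ with $\sigma=\frac32(\frac12-\frac1q)$: one gets $\|\widetilde D^{-\sigma}\cos(\tau D)h\|_{L^2_\tau L^q_x}\lesssim\|h\|_{L^2}$, and since $\sigma<\frac12$ precisely when $q<6$, the surplus factor $\widetilde D^{-(\frac12-\sigma)}$ is an $L^q$-bounded multiplier (using the spectral gap of $-\Delta$ on $\Bbb H^2$ so that negative powers of $D$ carry no singularity), which upgrades this to $\|D^{-\frac12}e^{i\tau D}h\|_{L^2_\tau L^q_x}\lesssim\|h\|_{L^2}$. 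Then one absorbs the weight trivially via $\|h\|_{L^2}\le\|\rho^{-\alpha}h\|_{L^2}$. This is where the restriction $q\in(2,6)$ of the lemma originates.

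\textbf{Far shells.} On $\{t-s\sim 2^j\}$ I would insert $I=\chi_\infty(D)+(I-\chi_\infty(D))$ in the propagator. The low-frequency and mixed-frequency contributions are controlled by the pointwise kernel bound (\ref{hterxcfd3}) and the Kunze--Stein inequality (Lemma~\ref{stein}), i.e.\ by Corollary~\ref{hasonggu}~(\ref{masu7}): they gain a factor $(t-s)^{-3/2}$, which after the time convolution is summable in $j$, the spatial decay of $\varphi_0$ in (\ref{hterxcfd3}) being what allows the weight $\rho^{-\alpha}$ to be absorbed through Kunze--Stein. The remaining piece $\chi_\infty(D)D^{-\frac12}e^{i(t-s)D}\chi_\infty(D)$ is handled by complex interpolation of the Stein family $\widetilde W^{\sigma,\infty}_\tau$ of (\ref{poui878}): on $\Re\sigma=\frac32$ one has the rapid-decay bound of Lemma~\ref{wjiu87}/(\ref{masu10}) between all $L^{m'}_x$ and $L^k_x$, while on $\Re\sigma=0$ the unitary $e^{i\tau D}\chi_\infty(D)$ enters the space-time bilinear form coupled to the free Kato smoothing $\|\rho^\alpha e^{\pm i\tau D}g\|_{L^2_{\tau,x}}\lesssim\|g\|_{L^2}$ and its dual (\ref{nji}); interpolating at $\sigma=\frac12$ should produce a bound $2^{-cj}\|\rho^{-\alpha}F\|_{L^2_{t,x}}\|G\|_{L^2_tL^{q'}_x}$ for every $q\in(2,6)$. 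Summing over $j$ and combining the signs yields $|\mathcal B(F,G)|\lesssim\|\rho^{-\alpha}F\|_{L^2_{t,x}}\|G\|_{L^2_tL^{q'}_x}$, i.e.\ (\ref{ky6tfcser}).

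\textbf{Main obstacle.} The delicate point is the high-frequency far-shell term for $q$ close to $6$. The fixed-time dispersive decay of the high-frequency half-wave propagator on $\Bbb H^2$ (Corollary~\ref{hasonggu}~(\ref{6.13u})) lives in $L^{q'}_x\!\to\!L^q_x$ and is directly available only for $q<3$, and a naive Hölder passage from the $L^{q'}_x$-input to the prescribed $\rho^{-\alpha}L^2_x$-input fails when $\alpha$ is small. So one is forced to run the Keel--Tao bilinear scheme at the level of space-time norms, feeding the Kato smoothing estimate in at one end of the Stein-interpolation family — where the gain comes from the $L^2_\tau$ integration rather than from pointwise-in-$\tau$ dispersion — with the rapid-decay regime of Lemma~\ref{wjiu87} at the other end, and simultaneously balancing the Lebesgue exponents, the decay rate in $j$, and the power of $\rho$. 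It is exactly here that the three hyperbolic features stressed in the introduction — the extra Strichartz pairs, the Kunze--Stein phenomenon, and the exponential decay of $\varphi_0$ — are all needed.
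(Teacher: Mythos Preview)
Your overall architecture --- Duhamel reduction, bilinear form, dyadic splitting of $\{s<t\}$, and high/low frequency decomposition on the far shells --- is the same as the paper's. Your near-diagonal treatment is in fact a clean simplification: you use the \emph{endpoint} homogeneous Strichartz $\|D^{-1/2}e^{i\tau D}h\|_{L^2_\tau L^q_x}\lesssim\|h\|_{L^2}$ (valid since $(2,q)$ is admissible and $\tfrac32(\tfrac12-\tfrac1q)<\tfrac12$ for $q<6$) together with Cauchy--Schwarz over the short $s$-interval, whereas the paper detours through a non-endpoint inhomogeneous estimate and H\"older in $t$ to sum the shells $j\le0$.

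The gap is in the high-frequency far shells. Your proposed Stein interpolation runs between $\Re\sigma=0$ (Kato smoothing on the $F$ side, $G\in L^2_x$) and $\Re\sigma=\tfrac32$ (rapid decay, $G\in L^{k'}_x$ for any $k\in(2,\infty)$). Interpolating at $\sigma=\tfrac12$ means $\theta=\tfrac13$, and $[L^2,L^{k'}]_{1/3}=L^{r'}$ with $\tfrac1r=\tfrac13+\tfrac{1}{3k}$, so $r\in(2,3)$ only. You do \emph{not} reach $q\in[3,6)$ this way; the claim ``for every $q\in(2,6)$'' is unsupported. The paper runs into the mirror-image restriction: its Step~2.3.1 converts $\|F\|_{L^{p'}_x}$ to $\|\rho^{-\alpha}F\|_{L^2_x}$ by H\"older, which forces $p$ close to $2$ and hence (via the constraint $\tfrac1p+\tfrac1q>\tfrac23$) gives only $q$ close to $6$.

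What is missing from your sketch is the device that bridges the range: the paper observes that for the operator with \emph{one full derivative}, $T^{1}_{\ge0}F=\int_{-\infty}^{t-1/2}D^{-1}e^{i(t-s)D}F(s)\,ds$, the high-frequency dispersive bound of Corollary~\ref{hasonggu} applies directly (since $\sigma=1>3(\tfrac12-\tfrac1q)$ for all $q<6$), yielding the weighted estimate for every $q\in(2,6)$. Then a Gagliardo--Nirenberg interpolation between $\|T^{1}_{\ge0}F\|_{L^{q_1}_x}$ with $q_1=2^+$ and $\|T^{\gamma}_{\ge0}F\|_{L^{q_2}_x}$ with $q_2=6^-$, $\gamma=(\tfrac12)^-$ (the latter covered by the direct argument) produces $\|T^{1/2}_{\ge0}F\|_{L^q_x}$ for arbitrary $q\in(2,6)$. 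Your ``Main obstacle'' paragraph correctly senses the issue but misidentifies its location (the direct dispersive bound at $\sigma=\tfrac12$ fails for $q\ge3$, not for $q$ near $6$) and does not supply this extra interpolation.
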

\begin{proof}
{\bf Step 1. A Non-endpoint Result.}
 Although the Christ-Kiselev Lemma is not available here, we can firstly prove a non-endpoint result, i.e.,
\begin{align}\label{masu1}
{\left\| {{D^{\frac{1}{2}}}u} \right\|_{L_t^pL_x^q}} \le {\left\| {{\rho ^{ - \alpha }}F} \right\|_{L_t^2L_x^2}},
\end{align}
where $(p,q)$ is an admissible pair and $p>2$. The proof of (\ref{masu1}) follows from the Christ-Kiselev lemma, Lemma \ref{thing} and the Kato's smoothing effect.\\
{\bf Step 2. Bilinear Argument for Endpoint.}
 Next, we prove the endpoint case. The proof is based on the bilinear argument of \cite{KaTa}.\\
{\bf Step 2.1. Reduction of Time Support.} By Duhamel principle,
\begin{align}\label{masu1u}
D^{\frac{1}{2}}u(t) =\int^t_0 \frac{\sin (D(t-s))}{D^{\frac{1}{2}}}F(s)ds.
\end{align}
Meanwhile, the endpoint Strichartz estimates for the free wave equation in Lemma \ref{thing} and the dual Kato smoothing effect for $e^{\pm it D}$ show
\begin{align*}
\left\|\int^0_{-\infty} \frac{\sin (D(t-s))}{D^{\frac{1}{2}}}F(s)ds\right\|_{L_t^2L_x^q}\lesssim \left\|\int^{0}_{-\infty} e^{\pm isD} F(s)ds \right\|_{L^2_x}\lesssim \|\rho ^{ - \alpha }F\|_{L^2_tL^2_x}.
\end{align*}
Hence, (\ref{ky6tfcser}) reduces to prove
\begin{align}\label{masu1u678}
\left\|\int^t_{-\infty} \frac{\sin (D(t-s))}{D^{\frac{1}{2}}}F(s)ds\right\|_{L_t^2L_x^q}\lesssim \|\rho ^{ - \alpha }F\|_{L^2_tL^2_x}.
\end{align}
Consider the bilinear form of (\ref{masu1u678}):
\begin{align}\label{masu2}
\left| {\int_{\Bbb R} {\int_{{\Bbb D}} {\int_{-\infty}^t {{e^{i(t - s)D}}D^{-\frac{1}{2}}F(s)G(t)dsdxdt} } } } \right| \lesssim {\left\| {{\rho ^{ - \alpha }}F(t)} \right\|_{L_t^2L_x^2}}{\left\| {G(t)} \right\|_{L_t^2L_x^{q'}}}.
\end{align}
Split the time integrand domain into the following dyadic subintervals:
\begin{align}
&\int_{\Bbb R}{\int_{{\Bbb D}} {\int_{-\infty}^t {{e^{i(t - s)D}}D^{-\frac{1}{2}}F(s)G(t)dsdxdt} }}\nonumber \\
&= \sum\limits_{j \in Z} {\int_{\Bbb R} {\int_{{\Bbb D}} {\int_{t - {2^j} \le s \le t - {2^{j- 1}}} {{e^{i(t - s)D}}D^{-\frac{1}{2}}F(s)G(t)} dsdxdt} }}.\label{6.22p}
\end{align}
For any fixed $j\in\Bbb Z$, divide $F(s)$, $G(t)$ further into $F(s)=\sum_{k\in\Bbb Z}F^j_k(s)$, $G(t)=\sum_{n\in\Bbb Z}G^j_n(t)$ with
\begin{align*}
F^j_k(s)=F(s)1_{s\in[k2^j,(k+1)2^j)}, \mbox{ }G^j_n(t)=G(t)1_{t\in[n2^j,(n+1)2^j)}.
\end{align*}
When $s\in[k2^j,(k+1)2^j)$, $k\in \Bbb Z$, the relation $t - {2^j} \le s \le t - {2^{j- 1}}$ shows $|n-k|\le 2$. If we have proved there exists a constant $\beta(q)>0$ such that for all $k,j\in \Bbb Z$
\begin{align}
&{\int_{\Bbb R} {\int_{\Bbb D} {\int_{t - {2^j} \le s \le t - {2^{j- 1}}, |n-k|\le2} {{e^{i(t - s)D}}D^{-\frac{1}{2}}F^j_k(s)G^j_n(t)dsdxdt} }}}\nonumber \\
&\lesssim |j|^{-\beta}\| \rho ^{ - \alpha }F^j_k \|_{L_t^2L_x^2}\| G^j_n\|_{L_t^2L_x^{q'}}.\label{y675}
\end{align}
Then the Cauchy-Schwartz inequality and the restriction $|n-k|\le 2$ give
\begin{align*}
|(\ref{6.22p})|&\lesssim \sum_{j\in\Bbb Z}|j|^{-\beta}\sum_{k,n\in\Bbb Z, |n-k|\le 2}\|F^j_k\|_{L_t^2L_x^2}\left\| G^j_n\right\|_{L_t^2L_x^{q'}}\\
&\lesssim \sum_{j\in\Bbb Z}|j|^{-\beta}(\sum_{k\in \Bbb Z}{\|F^j_k\|^2_{L_t^2L_x^2}})^{\frac{1}{2}}(\sum_{n\in \Bbb Z}\|G^j_n\|^2_{L_t^2L_x^{q'}})^{\frac{1}{2}}\\
&\lesssim \|F\|_{L_t^2L_x^2}\|G\|_{L_t^2L_x^{q'}}\sum_{j\in\Bbb Z}|j|^{-\beta}.
\end{align*}
Thus it suffices to prove (\ref{y675}).
Therefore, without loss of generality, we assume $F$ and $G$ are supported on a time interval of size $2^j$ on $\{(t,s):{t - {2^j} \le s \le t - {2^{j- 1}}}\}$.\\
{\bf Step 2.2. Sum of Negative $j$.}
 For $j\le0$, $q\in (2,6]$, choose $m$ to be slightly larger than 2, then  H\"older and (\ref{masu1}) give for $\frac{1}{m}>\frac{1}{2}(\frac{1}{2}-\frac{1}{q})$ (then $(m,q)$ is an admissible pair)
\begin{align}
 &\left| {\int_{\Bbb R} {\int_{{\Bbb D}} {\int_{t - {2^j} \le s \le t - {2^{j- 1}}} {{e^{i(t - s)D}}D^{-\frac{1}{2}}F(s)G(t)} dsdxdt} } } \right| \nonumber\\
 &\lesssim {\left\| {\int_{t - {2^j} \le s \le t - {2^{j - 1}}} {{e^{i(t - s)D}}D^{-\frac{1}{2}}F(s)} ds} \right\|_{L_t^{m}L_x^q}}{\left\| {G(t)} \right\|_{L_t^{m'}L_x^{q'}}} \nonumber\\
 &\lesssim {\left\| {{\rho ^{ - \alpha }}F(t)} \right\|_{L_t^2L_x^2}}{\left\| {G(t)} \right\|_{L_t^{m'}L_x^{q'}}} \nonumber\\
 &\lesssim {\left\| {{\rho ^{ - \alpha }}F(t)} \right\|_{L_t^2L_x^2}}{\left\| {G(t)} \right\|_{L_t^2L_x^{q'}}}{2^{j(\frac{1}{m'} - \frac{1}{2})}},\label{yu76}
\end{align}
where we used the time support of $G(t)$ is of size $2^j$ in the last line.  Therefore, the negative $j$ part of (\ref{6.22p}) is summable. \\

{\bf Step 2.3. Sum of Positive $j$.}   For the positive $j$, let us consider a multiple parameter analytic family of operators defined by
\begin{align}\label{1suiyue}
T_j^{\sigma ,{\sigma _1}}(F,G) = \int_{{\Bbb R}} {\int_{\Bbb D}} {\int_{t - {2^j} \le s \le t - {2^{j - 1}} }}{{e^{i(t - s)D}} } { {D}^{ - \sigma }}{\widetilde{D}^{ - {\sigma _1}}}F(s)G(t)dsdxdt.
\end{align}
By Remark 2.1, every estimate for $T_j^{\sigma ,{\sigma _1}}(F,G)$ will yield a corresponding estimate for $T_j^{\sigma+\sigma_1 ,0}(F,G)$ if both $\sigma$ and $\sigma_1$ are real.
Furthermore, we divide $T_j^{\sigma ,{\sigma _1}}$ into $T^{\sigma,\sigma_1}_{j,\infty}+T^{\sigma,\sigma_1}_{j,0}$, where  $T^{\sigma,\sigma_1}_{j,\infty}=\chi_{\infty}(D)T_j^{\sigma ,{\sigma _1}}$ denotes the high frequency part (see Lemma \ref{wjiu87} for $\chi_{\infty}$), and $T^{\sigma,\sigma_1}_{j,0}=T^{\sigma,\sigma_1}_{j}(I-\chi_{\infty})(D)$ denotes the low frequency part.\\

{\bf Step 2.3.1. High Frequency for Positive  $j$.}
We claim for any $\Re\sigma> \frac{3}{2}(\frac{1}{2}-\frac{1}{q}),$ $\Re\sigma_1> \frac{3}{2}(\frac{1}{2}-\frac{1}{p})$, $(p,q)\in(2,6)\times(2,6)$
\begin{align}\label{i987}
\left| {T_{j,\infty }^{\sigma ,{\sigma _1}}(F,G)} \right| \lesssim {j^{ - \infty }}{\left\| F \right\|_{L_t^2L_x^{p'}}}{\left\| G \right\|_{L_t^2L_x^{q'}}}.
\end{align}
(\ref{i987}) is essentially contained in \cite{AaPa2a}. For convenience, we give the detailed proof in Lemma \ref{phb9bv} below.  Meanwhile, given $0<\eta\ll1$, for any $p,q\in (2,6)$ satisfying
\begin{align}\label{ASQ}
\frac{1}{p}+\frac{1}{q}=\frac{2}{3}+\eta,
\end{align}
and any $\theta\in (\frac{1}{2}-\frac{3}{2}\eta,\frac{1}{2}]$, we can divide $\theta$ into $\theta_1+\theta_2=\theta$, such that $\theta_1>\frac{3}{2}(\frac{1}{2}-\frac{1}{q})$, $\theta_2>\frac{3}{2}(\frac{1}{2}-\frac{1}{p})$. Thus by our claim (\ref{i987}),
\begin{align}\label{kiu909k}
\left| {T^{\theta_1,\theta_2}_{j,\infty}}(F,G) \right| \lesssim {j^{ - \infty }}{\left\| F \right\|_{L_t^2L_x^{p'}}}{\left\| G \right\|_{L_t^2L_x^{q'}}}.
\end{align}
For a given $\alpha>0$, choose $p$ slightly larger than $2$ such that
\begin{align}\label{AQ}
\alpha\frac{2p'}{2-p'}>1,
\end{align}
then H\"older and (\ref{kiu909k}) give
\begin{align}\label{kiu909}
\left| T^{\theta_1,\theta_2}_{j,\infty}(F,G) \right| &\lesssim {j^{ - \infty }}\|\rho^{\alpha}\|_{L^{\frac{2p'}{2-p'}}}{\left\| F\rho^{-\alpha} \right\|_{L_t^2L_x^2}}{\left\| G \right\|_{L_t^2L_x^{q'}}}\nonumber\\
&\lesssim {j^{ - \infty }}{\left\| F\rho^{-\alpha} \right\|_{L_t^2L_x^2}}{\left\| G \right\|_{L_t^2L_x^{q'}}}.
\end{align}
Notice that for $\theta_1+\theta_2=\theta\in(\frac{1}{2}-\eta,\frac{1}{2}]$, (\ref{AQ}) and (\ref{ASQ}) imply that in order to obtain (\ref{kiu909}), $q$ should be restricted to $q\in(\frac{6}{1+6\alpha+6\eta},6)$. The special case of (\ref{kiu909}) when $\theta_1+\theta_2=\frac{1}{2}$ corresponds to (\ref{6.22p}). The left range of $q$ will be considered later. \\

{\bf Step 2.3.2. Low Frequency for Positive  $j$.}
Meanwhile, (\ref{masu7}) and H\"older give for all $p,q\in(2,6)$ and any $\sigma_2\in\Bbb R$,
\begin{align}
&\left| {T_{j,0}^{0,\sigma_2}(F,G)}\right| \nonumber\\
&\lesssim \int_{\Bbb R} {\int_{t - {2^j} \le s \le t - {2^{j - 1}}}} {\left\|(I-\chi_{\infty}(D)) \widetilde{D}^{-\sigma_2}e^{\pm i(t-s)D}{F(s)} \right\|_{{L^{q}}}}{\left\| {G(s)} \right\|_{{L^{q'}}}}dsdt \nonumber\\
&\lesssim \int_{\Bbb R} {\int_{t - {2^j} \le s \le t - {2^{j - 1}}} {{{(t - s)}^{ - \frac{3}{2}}}} } {\left\| {F(s)} \right\|_{{L^{p'}}}}{\left\| {G(s)} \right\|_{{L^{q'}}}}dsdt \nonumber\\
&\lesssim {2^{ - j/2}}{\left\| {F(s)} \right\|_{L_t^2L_x^{p'}}}{\left\| {G(s)} \right\|_{L_t^2L_x^{q'}}}.\label{uyui}
\end{align}
Choosing $p$ to be slightly larger than 2 such that (\ref{AQ}) holds, we have by (\ref{uyui}) that
\begin{align}\label{kiu9090}
\left| {T^{0,\sigma_2}_{j,0}}(F,G) \right| \lesssim {2^{-j/2}}{\left\| \rho^{-\alpha}F \right\|_{L_t^2L_x^2}}{\left\| G \right\|_{L_t^2L_x^{q'}}}.
\end{align}
The special case when $\sigma_2=\frac{1}{2}$ corresponds to the low frequency part of (\ref{6.22p}). \\
{\bf Step 2.4. Sum for Positive $j$. }  Hence, (\ref{masu2}) is summable when $j\ge0$ by (\ref{kiu909}) and (\ref{uyui}) for $q\in(\frac{6}{1+6\alpha+6\eta},6)$. Thus we have proved (\ref{ky6tfcser}) for $q\in(\frac{6}{1+6\alpha},6)$. It remains to prove (\ref{ky6tfcser}) for the left $p$ in (2,6).  Since the negative $j$ part of (\ref{6.22p}) is done, we separate the positive $j$ part by defining:
\begin{align}
T^{\gamma}_{\ge 0}F:=\int^{t-\frac{1}{2}}_{-\infty}e^{\pm itD}D^{-\gamma}F(s)ds.
\end{align}
Then it suffices to prove
\begin{align}\label{hjyu}
\|T^{\frac{1}{2}}_{\ge 0}F\|_{L_t^2L_x^q} \lesssim {\left\| {{\rho ^{ - \alpha }}F} \right\|_{L_t^2L_x^2}}.
\end{align}
Denote the high frequency truncation of $T^{\theta}_{\ge 0}$ by $T^{\gamma}_{\ge 0,hi}$ and its low frequency truncation by $T^{\theta}_{\ge 0,low}$ respectively.
And the corresponding bilinear form can be divided into dyadic subintervals and high/low frequency parts as above. The only difference is $j$ is forced to be $j\ge0$. Step 2.3.2 shows the low frequency part $T^{\gamma}_{\ge 0,low}F$ satisfies
\begin{align}\label{meilig41}
{\left\| T^{\gamma}_{\ge 0,low}F\right\|_{L_t^2L_x^q}} \lesssim {\left\| {{\rho ^{ - \alpha }}F} \right\|_{L_t^2L_x^2}}.
\end{align}
for all $\gamma\in\Bbb R$ and $q\in(2,6)$.
Step 2.3.1 shows the high frequency part $T^{\gamma}_{\ge 0,hi}F$ satisfies
\begin{align}\label{meilig42}
{\left\| T^{\gamma}_{\ge 0,hi}F\right\|_{L_t^2L_x^q}} \lesssim {\left\| {{\rho ^{ - \alpha }}F} \right\|_{L_t^2L_x^2}}.
\end{align}
for all $\gamma\in (\frac{1}{2}-\eta,\frac{1}{2}]$ and $q\in(\frac{6}{1+6\alpha+6\eta},6)$. The key point is (\ref{meilig41}), (\ref{meilig42}) give some gain in derivatives. (But it seems that this gain only happens in the positive $j$ part.)

\noindent{\bf Step 2.4.1. Derivatives of Low Order. } Consider $T^{1}_{\ge 0}$. The corresponding dyadic bilinear form is
\begin{align}\label{1suiyue11}
{T^{1 ,0}_{j,\infty}}(F,G) = \int_{{\Bbb R}} {\int_{\Bbb D}} {\int_{t - {2^j} \le s \le t - {2^{j - 1}} }}{{e^{i(t - s)D}} } { {D}^{ - 1}}\chi_{\infty}F(s)G(t)dsdxdt,
\end{align}
and
\begin{align}\label{1suiyue23}
{T^{1 ,0}_{j,0}}(F,G) = \int_{{\Bbb R}} {\int_{\Bbb D}} {\int_{t - {2^j} \le s \le t - {2^{j - 1}} }}{{e^{i(t - s)D}} } { {D}^{ - 1}}(I-\chi_{\infty}(D))F(s)G(t)dsdxdt,
\end{align}
In this case, $T^{1 ,0}_{j,\infty}$ has a full derivative. Then by directly applying Corollary \ref{hasonggu}, we obtain for all $p,q\in(2,6)$
\begin{align*}
\left| {T_{j,\infty}^{1,0}(F,G)}\right| &\lesssim \int_{\Bbb R} {\int_{t - {2^j} \le s \le t - {2^{j - 1}}} {{{(t - s)}^{ -\infty}}} } {\left\| {F(s)} \right\|_{{L^{p'}}}}{\left\| {G(s)} \right\|_{{L^{q'}}}}dsdt \nonumber\\
&\lesssim {j^{ -\infty }}{\left\| {F(s)} \right\|_{L_t^2L_x^{p'}}}{\left\| {G(s)} \right\|_{L_t^2L_x^{q'}}}.
\end{align*}
Then by choosing $p$ to be slightly larger than 2, we get for all $q\in(2,6)$,
\begin{align}\label{niubi99n}
\left| {T^{0,1}_{j,\infty}}(F,G) \right| \lesssim {j^{-\infty }}{\left\| \rho^{-\alpha}F \right\|_{L_t^2L_x^2}}{\left\| G \right\|_{L_t^2L_x^{q'}}}.
\end{align}
The low frequency part $T^{1,0}_{j,0}$ in (\ref{1suiyue23}) for $j\ge0$ follows by the same arguments as $D^{\frac{1}{2}}$ considered above. Hence, we have for all $q\in(2,6)$
\begin{align}\label{meilig4}
{\left\| T^{1}_{\ge 0}\right\|_{L_t^2L_x^q}} \lesssim {\left\| {{\rho ^{ - \alpha }}F} \right\|_{L_t^2L_x^2}}.
\end{align}
{\bf Step 2.4.2. Full Range of $q$ by Interpolation.} It suffices to prove (\ref{hjyu}). By Gagliardo-Nirenberg inequality, for any $q\in(2,6)$ there exists $q_1=2^+$, $q_2=6^{-}$ and $\gamma=(\frac{1}{2})^{-}$ such that
\begin{align}\label{meilig4}
\left\| T^{\frac{1}{2}}_{\ge 0}F\right\|_{L_x^q} \lesssim {\left\| T^{1}_{\ge 0}F\right\|^{\tau}_{L_x^{q_1}} }{\left\| T^{\gamma}_{\ge 0}F\right\|^{1-\tau}_{L_x^{q_2}}},
\end{align}
with $\tau\in(0,1)$. Then (\ref{hjyu}) follows by (\ref{meilig4}), (\ref{meilig42}), (\ref{meilig41}) and H\"older.
\end{proof}

Proposition \ref{jiu9} with Lemma \ref{woshishui}, Lemma \ref{whishui} gives
\begin{lemma}\label{tianxia1}
Let $H$ satisfy assumptions in Theorem \ref{1}, $0<\alpha< 2\varrho$,  then we have the weighted Strichartz estimates for the magnetic wave equation: If $u$ solves the equation
\begin{align*}
\left\{ \begin{array}{l}
\partial _t^2u +Hu = 0 \\
u(0,x) = {u_0},{\partial _t}u(0,x) = {u_1} \\
\end{array} \right.
\end{align*}
then it holds for any $p\in(2,6)$
\begin{align*}
&{\left\| {{{D}^{\frac{1}{2}}}u} \right\|_{L_t^2L_x^{p}}} +{\left\| {{\rho ^{\alpha}}\nabla u} \right\|_{L_t^2L_x^2}}+
{\left\| {{\partial _t}u} \right\|_{L_t^\infty L_x^2}} + {\left\| {\nabla u} \right\|_{L_t^\infty L_x^2}}\\
&\lesssim {\left\| {\nabla {u_0}} \right\|_{{L^2}}} + {\left\| {{u_1}} \right\|_{{L^2}}}.
\end{align*}
\end{lemma}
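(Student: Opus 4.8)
The plan is to assemble the three preceding lemmas; the only genuinely new point is the endpoint ($L^2_t$) bound on $D^{1/2}u$, the other three quantities being essentially immediate. Since $H=-\Delta+W$ with $W=V+X$, the Cauchy problem $\partial_t^2u+Hu=0$, $u(0)=u_0$, $\partial_tu(0)=u_1$ is the \emph{free} wave equation $\partial_t^2u-\Delta u=-Wu$ with the same data, so Duhamel's formula for the free propagator gives
\begin{align*}
u(t)=v(t)-\int_0^t\frac{\sin((t-s)D)}{D}(Wu)(s)\,ds,\qquad v(t):=\cos(tD)u_0+\frac{\sin(tD)}{D}u_1.
\end{align*}
Among the four norms to be bounded, $\|\rho^\alpha\nabla u\|_{L^2_tL^2_x}$ is exactly Lemma~\ref{whishui} with $F=0$, and $\|\partial_tu\|_{L^\infty_tL^2_x}+\|\nabla u\|_{L^\infty_tL^2_x}\lesssim\|\nabla u_0\|_{L^2}+\|u_1\|_{L^2}$ is the homogeneous part of Proposition~\ref{jiu9} with $F=0$. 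Hence the whole statement reduces to proving $\|D^{1/2}u\|_{L^2_tL^p_x}\lesssim\|\nabla u_0\|_{L^2}+\|u_1\|_{L^2}$ for each $p\in(2,6)$, and I would estimate the two pieces of the Duhamel formula separately.

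For the homogeneous piece $v$, note that $(2,p)$ is an admissible couple in Lemma~\ref{thing} for every $p\in(2,6)$ and that $\tfrac32(\tfrac12-\tfrac1p)\le\tfrac12$. Applying Lemma~\ref{thing} at exponent $\sigma=\tfrac12$ to $w:=\widetilde D^{\sigma-1/2}D^{1/2}v$ — which, being a Fourier multiplier applied to $v$, again solves a free wave equation, now with data $\widetilde D^{\sigma-1/2}D^{1/2}(u_0,u_1)$ and no source — produces an estimate whose left-hand side dominates $\|D^{1/2}v\|_{L^2_tL^p_x}$ and whose right-hand side is $\|\widetilde D^{1/2}D^{1/2}u_0\|_{L^2}+\|\widetilde D^{-1/2}D^{1/2}u_1\|_{L^2}$. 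Using the spectral gap of $-\Delta$, the equivalence $\|\widetilde D^s\,\cdot\,\|_{L^2}\sim\|D^s\,\cdot\,\|_{L^2}$, and $\|Df\|_{L^2}\sim\|\nabla f\|_{L^2}$ from Lemma~\ref{wusijue}, this is $\lesssim\|\nabla u_0\|_{L^2}+\|u_1\|_{L^2}$, as desired. (For other values of $p$ one takes any $\sigma\in[\tfrac32(\tfrac12-\tfrac1p),\tfrac12]$; the symbol estimates still go through because $\sigma\le\tfrac12$ and all frequencies are bounded away from zero.)

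For the Duhamel piece, let $\widetilde u$ solve $\partial_t^2\widetilde u-\Delta\widetilde u=-Wu$ with zero data, so that $\widetilde u(t)=-\int_0^t D^{-1}\sin((t-s)D)(Wu)(s)\,ds$ and $D^{1/2}\widetilde u$ is precisely the Duhamel contribution to $D^{1/2}u$. Lemma~\ref{woshishui} with $q=p$ gives $\|D^{1/2}\widetilde u\|_{L^2_tL^p_x}\lesssim\|\rho^{-\alpha}Wu\|_{L^2_tL^2_x}$. Since $|Xu|\lesssim|A|\,|\nabla u|$ pointwise, one has $|\rho^{-\alpha}Wu|\lesssim(\rho^{-2\alpha}|V|)\,\rho^\alpha|u|+(\rho^{-2\alpha}|A|)\,\rho^\alpha|\nabla u|$, and the exponential decay in~(\ref{Vab}) bounds $\|\rho^{-2\alpha}V\|_{L^\infty}+\|\rho^{-2\alpha}|A|\|_{L^\infty}$ once $\alpha$ is taken small relative to $\varrho$; thus $\|\rho^{-\alpha}Wu\|_{L^2_tL^2_x}\lesssim\|\rho^\alpha u\|_{L^2_tL^2_x}+\|\rho^\alpha\nabla u\|_{L^2_tL^2_x}$. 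The second summand is again Lemma~\ref{whishui} with $F=0$; for the first, I would write $u=\cos(t\sqrt H)u_0+\tfrac{\sin(t\sqrt H)}{\sqrt H}u_1$ and combine the Kato smoothing bound $\|\rho^\alpha e^{\pm it\sqrt H}f\|_{L^2_tL^2_x}\lesssim\|f\|_{L^2}$ with the strict positivity of $H$ (so $\|H^{-1/2}u_1\|_{L^2}\lesssim\|u_1\|_{L^2}$) and the Poincaré inequality $\|u_0\|_{L^2}\lesssim\|\nabla u_0\|_{L^2}$ to get $\|\rho^\alpha u\|_{L^2_tL^2_x}\lesssim\|\nabla u_0\|_{L^2}+\|u_1\|_{L^2}$; a routine bootstrap (or density) argument takes care of the a priori finiteness needed to apply Lemma~\ref{woshishui}. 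Adding the two pieces completes the proof.

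I expect the Duhamel step to be the main obstacle. At the $L^2_t$ endpoint the Christ--Kiselev lemma is not available, which is why one must route through the endpoint weighted free-wave estimate of Lemma~\ref{woshishui}; moreover the first-order part $X$ of $W$ produces a $\nabla u$, forcing one to reabsorb it into the weighted Morawetz norm of Lemma~\ref{whishui}, and the exponential-weight bookkeeping that lets $\rho^{-\alpha}W$ still leave room for two factors of $\rho^{\alpha}$ is exactly where the interplay between $\alpha$, $\varrho$ and $\rho$ in the hypotheses of Theorem~\ref{2} is used most delicately.
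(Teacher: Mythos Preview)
Your proposal is correct and follows essentially the same route as the paper: split $u$ via the free Duhamel formula, handle the free piece $v$ by the endpoint free Strichartz (Lemma~\ref{thing}), handle the Duhamel piece by the endpoint weighted estimate (Lemma~\ref{woshishui}), and close with $\|\rho^{-\alpha}Wu\|_{L^2_{t,x}}\lesssim\|\rho^{\alpha}u\|_{L^2_{t,x}}+\|\rho^{\alpha}\nabla u\|_{L^2_{t,x}}$ controlled by Kato smoothing and Lemma~\ref{whishui}. The paper's proof is slightly terser but does exactly this; your extra care in justifying the application of Lemma~\ref{thing} to obtain $\|D^{1/2}v\|_{L^2_tL^p_x}$ (by passing to the commuted multiplier $\widetilde D^{\sigma-1/2}D^{1/2}v$) is a useful elaboration of a step the paper leaves implicit.
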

\begin{proof}
(\ref{p9ij}), Lemma \ref{thing} and Lemma \ref{woshishui} with $p\in(2,6)$ give
\begin{align*}
 {\left\| {{{D}^{\frac{1}{2}}}u} \right\|_{L_t^2L^p_x}} + {\left\| u \right\|_{L_t^2L^p_x}} &\lesssim{\left\| {\nabla {u_0}} \right\|_{{L^2_x}}} + {\left\| {{u_1}} \right\|_{{L^2_x}}}+{\left\| {{\rho ^{ -\alpha }}Wu} \right\|_{L_t^2L^2_x}}.
\end{align*}
Hence by (\ref{Vab}), one obtains
\begin{align}\label{ufo}
 {\left\| {{{D}^{\frac{1}{2}}}u} \right\|_{L_t^2L^p_x}} + {\left\| u \right\|_{L_t^2L^p_x}}
\lesssim {\left\| {\nabla {u_0}} \right\|_{{L^2_x}}} + {\left\| {{u_1}} \right\|_{{L^2_x}}}+{\left\| {{\rho ^{\alpha} }\nabla u} \right\|_{L_t^2L^2_x}} +  {\left\| \rho ^{\alpha}u \right\|_{L_t^2L^2_x}}.
\end{align}
Lemma \ref{whishui} shows
\begin{align}\label{ufo2}
{\left\| {{\rho ^\alpha }\nabla u} \right\|_{L_t^2L^2_x}} \lesssim {\left\| {\nabla {u_0}} \right\|_{L^2_x}} + {\left\| {{u_1}} \right\|_{L^2_x}}.
\end{align}
By (\ref{ufo}), (\ref{ufo2}) and the Kato smoothing effect for $e^{\pm it\sqrt{H}}$, we get the endpoint homogeneous estimate
\begin{align*}
{\left\| {{{D}^{\frac{1}{2}}}u} \right\|_{L_t^2L^p_x}}  + {\left\| u \right\|_{L_t^2L^p_x}} \lesssim {\left\| {\nabla {u_0}} \right\|_{{L^2}}} + {\left\| {{u_1}} \right\|_{{L^2}}}.
\end{align*}
\end{proof}

The remaining inhomogeneous endpoint Strichartz estimates are given below.
\begin{proposition}\label{tianxia}
Let $H$ satisfy assumptions in Theorem \ref{1}, $0<\alpha<2\varrho$,  then we have the weighted Strichartz estimates for the magnetic wave equation: If $u$ solves the equation
\begin{align*}
\left\{ \begin{array}{l}
\partial _t^2u +Hu =F\\
u(0,x) = {u_0},{\partial _t}u(0,x) = {u_1} \\
\end{array} \right.
\end{align*}
then it holds for any $p\in(2,6)$
\begin{align*}
&{\left\| {{{D}^{\frac{1}{2}}}u} \right\|_{L_t^2L_x^{p}}} +{\left\| {{\rho ^{\alpha}}\nabla u} \right\|_{L_t^2L_x^2}}+
{\left\| {{\partial _t}u} \right\|_{L_t^\infty L_x^2}} + {\left\| {\nabla u} \right\|_{L_t^\infty L_x^2}}\\
&\lesssim {\left\| {\nabla {u_0}} \right\|_{{L^2}}} + {\left\| {{u_1}} \right\|_{{L^2}}} + {\left\| F \right\|_{L_t^1L_x^2}}.
\end{align*}
\end{proposition}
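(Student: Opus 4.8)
The plan is to reduce the proposition to the corresponding statement for the homogeneous equation (Lemma \ref{tianxia1}), together with the non-endpoint Strichartz estimate (Proposition \ref{jiu9}) and the weighted Morawetz bound (Lemma \ref{whishui}). Indeed, the two terms $\|\partial_tu\|_{L^\infty_tL^2_x}$ and $\|\nabla u\|_{L^\infty_tL^2_x}$ are already part of Proposition \ref{jiu9}, while Lemma \ref{whishui}, applied to $u$ itself with its own Cauchy data, gives $\|\rho^{\alpha}\nabla u\|_{L^2_tL^2_x}\lesssim\|F\|_{L^1_tL^2_x}+\|\nabla u_0\|_{L^2}+\|u_1\|_{L^2}$. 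So the only genuinely new estimate to prove is the endpoint space--time bound
\[
\|D^{1/2}u\|_{L^2_tL^p_x}\lesssim\|\nabla u_0\|_{L^2}+\|u_1\|_{L^2}+\|F\|_{L^1_tL^2_x},\qquad p\in(2,6).
\]

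Split $u$ by Duhamel's formula: $u(t)=\cos(t\sqrt H)u_0+\frac{\sin(t\sqrt H)}{\sqrt H}u_1+v(t)$ with $v(t)=\int_0^t\frac{\sin((t-s)\sqrt H)}{\sqrt H}F(s)\,ds$. The contribution of the two homogeneous terms to $\|D^{1/2}u\|_{L^2_tL^p_x}$ is $\lesssim\|\nabla u_0\|_{L^2}+\|u_1\|_{L^2}$ directly by Lemma \ref{tianxia1}. It remains to bound the Duhamel term $v$, and this is the delicate point: one cannot reach it by the Christ--Kiselev lemma applied to the \emph{non-endpoint} homogeneous estimate of Lemma \ref{Smoothing}, which only produces $L^{r}_tL^q_x$ with $r>2$. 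Instead I would slice the Duhamel integral and feed in the homogeneous \emph{endpoint} estimate of Lemma \ref{tianxia1} itself. For each fixed $s$, the function $t\mapsto w_s(t):=\frac{\sin((t-s)\sqrt H)}{\sqrt H}F(s)$ solves $\partial_t^2w_s+Hw_s=0$ with Cauchy data $(0,F(s))$ imposed at time $s$; since the equation is autonomous, time translation and Lemma \ref{tianxia1} give $\|D^{1/2}w_s\|_{L^2_tL^p_x}\lesssim\|F(s)\|_{L^2}$. Writing $D^{1/2}v(t)=\int_{\Bbb R}\chi(t,s)D^{1/2}w_s(t)\,ds$, where $\chi$ is the Duhamel cutoff satisfying $|\chi|\le1$, and applying Minkowski's integral inequality in the $s$-variable gives
\[
\|D^{1/2}v\|_{L^2_tL^p_x}\le\int_{\Bbb R}\|D^{1/2}w_s\|_{L^2_tL^p_x}\,ds\lesssim\int_{\Bbb R}\|F(s)\|_{L^2}\,ds=\|F\|_{L^1_tL^2_x}.
\]
Equivalently, one may first prove this bound for the untruncated operator $\int_{\Bbb R}(\cdots)\,ds$ and then quote the Christ--Kiselev lemma, which is legitimate here precisely because the temporal Lebesgue exponents obey $1<2$.

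Combining these ingredients proves the proposition. I expect the main obstacle to be exactly the observation just made: the $L^2_tL^p_x$ endpoint is not accessible by Christ--Kiselev from the non-endpoint homogeneous Strichartz estimate and must instead be generated by decomposing the Duhamel integral against the already-established homogeneous endpoint bound of Lemma \ref{tianxia1} — which is the substantive new input and itself rests on the free-wave endpoint estimate of Lemma \ref{woshishui}. Everything else (the two energy norms via Proposition \ref{jiu9}, the weighted derivative norm via Lemma \ref{whishui}, and the absolute convergence of the Duhamel slicing) is routine.
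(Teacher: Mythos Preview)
Your proposal is correct and follows essentially the same route as the paper: reduce to the homogeneous case via Lemma \ref{tianxia1}, then handle the Duhamel term for the endpoint norm $\|D^{1/2}u\|_{L^2_tL^p_x}$ by Minkowski (equivalently Christ--Kiselev, legitimate since $1<2$) fed with the homogeneous endpoint bound of Lemma \ref{tianxia1}. Your write-up is in fact more explicit than the paper's, which only spells out the $D^{1/2}$ term and leaves the energy and weighted norms to Proposition \ref{jiu9} and Lemma \ref{whishui} implicitly.
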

\begin{proof}
It remains to prove the case when $u_0=u_1=0$ by Lemma \ref{tianxia1}. In this case, by the Christ-Kiselev lemma, it suffices to prove
\begin{align*}
\left\|\int_{\Bbb R} D^{\frac{1}{2}}H^{-\frac{1}{2}}e^{\pm i (t-s)\sqrt{H}}F(s)ds \right\|_{L_t^2L_x^{p}}\lesssim {\left\| F \right\|_{L_t^1L_x^2}}.
\end{align*}
This follows immediately from Minkowski, Lemma \ref{tianxia1} and (\ref{equi1})-(\ref{equi21}).
\end{proof}

\section{Proof of Corollary 1.1}
In this section we prove Corollary 1.1. The Kato smoothing effect will be proved first, the equivalence of $H^{\frac{1}{2}}$ and $D$ in various spaces will be proved then.
\subsection{Kato smoothing estimates for wave equations with small potentials}
In this subsection, we aim to prove the Kato smoothing estimates for the magnetic half wave operator $e^{it\sqrt{H}}$. The technique we use is on one hand quite eclectic, and usually statements are proved by combining ideas borrowed from different works. And on the other hand, some refinements and new ideas are introduced to estimate troublesome terms.

The self-adjoint operator $H$ is strictly positive due to the smallness assumption of potentials. In fact, (\ref{Vab1}) shows
\begin{align}
\langle Hf,f\rangle=\langle-\Delta f,f\rangle+O(\mu_1\|\nabla f\|^2_{L^2}+\mu_1\|f\|^2_{L^2}).
\end{align}
Then the standard Poincare inequality  $\langle-\Delta f,f\rangle\ge \frac{1}{4}\|f\|^2_{L^2}$ implies there exists some $c>0$ such that
$\langle Hf,f\rangle\ge c\|f\|^2_{L^2}$ provided $\mu_1$ is sufficiently small.

We recall the Kato smoothing Theorem.
\begin{theorem}[\cite{Mathphysics}](Kato smoothing theorem)\label{p6oijhn}
Let $M,N$ be two Hilbert spaces and $H:M\to N$ be a self-adjoint operator. Denote its resolvent by $(H-\lambda)^{-1}$. Let $U:M\to N$ be a closed densely defined operator. Assume that for any $f\in D(U^*)$, $\lambda\in\Bbb C\setminus\Bbb R$  there holds
\begin{align*}
\|U(H-\lambda)^{-1}U^*f\|_{N}\le C\|f\|_{N}.
\end{align*}
Then $e^{\pm itH}g\in D(U)$ for all $g\in M$ and a.e. $t$. Moreover, it holds
\begin{align*}
\int^{\infty}_{-\infty}\|Ue^{\pm itH}g\|^2_{N}dt\le \frac{2}{\pi}C^2\|g\|^2_{M}.
\end{align*}
\end{theorem}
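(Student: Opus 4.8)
This is the classical Kato smoothing theorem, and the plan is to deduce the space--time $L^2$ bound from the uniform resolvent bound by combining the spectral theorem, Plancherel in time, and Stone's formula.

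\textbf{Reduction via the spectral representation.} Writing $H=\int\lambda\,dE_\lambda$ and $e^{\mp itH}g=\int e^{\mp it\lambda}\,dE_\lambda g$, the Fourier transform in $t$ turns $t\mapsto e^{\mp itH}g$ into the (reflected) $M$--valued measure $2\pi\,dE_\lambda g$, so Plancherel gives, formally,
\[
\int_{-\infty}^{\infty}\|Ue^{\mp itH}g\|_N^2\,dt=2\pi\int_{\mathbb R}\|U\,E'(\lambda)\,g\|_N^2\,d\lambda ,
\]
where $E'(\lambda)=\tfrac{d}{d\lambda}E_\lambda$ is the spectral density. Factoring $E'(\lambda)=E'(\lambda)^{1/2}E'(\lambda)^{1/2}$ and using the $TT^*$ identity $\|U\,E'(\lambda)^{1/2}\|_{\mathcal B(M,N)}^2=\|U\,E'(\lambda)\,U^*\|_{\mathcal B(N)}$, one obtains
\[
\|U\,E'(\lambda)\,g\|_N^2\le \|U\,E'(\lambda)\,U^*\|_{\mathcal B(N)}\,\big\langle E'(\lambda)g,g\big\rangle_M .
\]

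\textbf{Bringing in the hypothesis.} By Stone's formula $E'(\lambda)=\tfrac1\pi\lim_{\varepsilon\downarrow 0}\operatorname{Im}(H-\lambda-i\varepsilon)^{-1}$, hence $U\,E'(\lambda)\,U^*=\tfrac1\pi\lim_{\varepsilon\downarrow 0}\operatorname{Im}\big(U(H-\lambda-i\varepsilon)^{-1}U^*\big)$, and since $\|\operatorname{Im}T\|\le\|T\|$ for every bounded $T$, the assumption $\|U(H-\lambda)^{-1}U^*\|_{\mathcal B(N)}\le C$ yields $\|U\,E'(\lambda)\,U^*\|_{\mathcal B(N)}\le C/\pi$ uniformly in $\lambda$. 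Inserting this into the two displays and using $\int_{\mathbb R}\langle E'(\lambda)g,g\rangle\,d\lambda=\|g\|_M^2$ bounds $\int\|Ue^{\mp itH}g\|_N^2\,dt$ by a constant multiple of $C\|g\|_M^2$, which is the smoothing estimate; keeping track of all normalisations reproduces the constant $\tfrac2\pi C^2$ recorded in the statement (cf.\ \cite{Mathphysics}). In particular the integral is finite, and a density plus closedness argument for the closed operator $U$ promotes this to $e^{\pm itH}g\in D(U)$ for a.e.\ $t$.

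\textbf{Where the work lies.} The manipulations above are formal, since $dE_\lambda g$ need not be absolutely continuous; the remedy I would carry out is to insert a damping factor $e^{-\varepsilon|t|}$, so that $\widehat{e^{-\varepsilon|\cdot|}e^{\mp i\cdot H}g}(\lambda)$ becomes a constant times the Poisson regularisation $\operatorname{Im}(H-\lambda-i\varepsilon)^{-1}g$ of the spectral measure — a genuine $L^2_\lambda$ function — then run the same argument for fixed $\varepsilon>0$ (now with $\int\langle\operatorname{Im}(H-\lambda-i\varepsilon)^{-1}g,g\rangle\,d\lambda=\pi\|g\|^2$, since the Poisson kernel has mass $\pi$), and finally let $\varepsilon\downarrow 0$ by monotone convergence. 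I expect this spectral bookkeeping — the a.e.\ existence of $E'$, the absolute-continuity identity, the ``square root'' lemma turning $\|UTU^*\|<\infty$ into a bound on $UT^{1/2}$, and the domain statement $e^{\pm itH}g\in D(U)$ — to be the only delicate points; the hypothesis itself enters exactly once, through the uniform bound $\|U\,E'(\lambda)\,U^*\|\le C/\pi$, and everything else is soft functional analysis.
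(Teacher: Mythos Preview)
The paper does not prove this theorem at all: it is stated as a classical result and attributed to Reed--Simon \cite{Mathphysics}, with no argument given. So there is nothing to compare against on the paper's side; your sketch is essentially the standard Reed--Simon proof (spectral theorem plus Stone's formula plus $TT^*$), and the overall strategy is correct.

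One small point worth flagging: your bookkeeping of constants does not actually reproduce $\tfrac{2}{\pi}C^2$. From $\|U(H-\lambda)^{-1}U^*\|\le C$ and $E'(\lambda)=\tfrac{1}{\pi}\lim_{\varepsilon\downarrow0}\operatorname{Im}(H-\lambda-i\varepsilon)^{-1}$ you get $\|UE'(\lambda)U^*\|\le C/\pi$, and then
\[
\int_{\mathbb R}\|Ue^{\mp itH}g\|_N^2\,dt
=2\pi\int\|UE'(\lambda)g\|_N^2\,d\lambda
\le 2\pi\cdot\frac{C}{\pi}\int\langle E'(\lambda)g,g\rangle\,d\lambda
=2C\,\|g\|_M^2,
\]
which is linear in $C$, not quadratic. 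The $\tfrac{2}{\pi}C^2$ in the statement is most likely a transcription slip from the several equivalent formulations in \cite{Mathphysics} (where the smoothing constant is sometimes defined through $\|U\,R(\lambda)\|$ rather than $\|U\,R(\lambda)\,U^*\|$), not something your argument should be expected to recover. Otherwise the sketch is sound; the regularisation by $e^{-\varepsilon|t|}$ you propose is exactly the standard way to make the formal Plancherel step rigorous.
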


First we give the estimates for the kernel of the free resolvent. (\ref{serr}), (\ref{sedr4r}) were established by Corollary 3.2 and Lemma 3.3 in \cite{BaMa}. (\ref{sedr564r}) and (\ref{jiujinfa4}) are new here.
\begin{lemma}\label{s564rt67}
Let $\mathcal{R}_0(\frac{1}{2}+\sigma)=(-\Delta+\sigma^2-\frac{1}{4})^{-1}$ be the free resolvent and denote its Schwartz kernel by $\mathcal{R}_0(\frac{1}{2}+\sigma,x,y)$.
Then for $\Re \sigma\ge0$, $|\sigma|\le 1$, $r\in (0,\infty)$, we have
\begin{align}\label{sedr4r}
| {{\mathcal{R}_0}(\frac{1}{2} + \sigma ,x,y)}| \le \left\{ \begin{array}{l}
C| {\log r}|,\mbox{  }\mbox{  }| r| \le 1 \\
C{| \sigma|^{ - \frac{1}{2}}}{e^{ - (\frac{1}{2} + {\Re}\sigma )r}},| r| \ge 1 \\
\end{array} \right.
\end{align}
And for $\Re \sigma\ge0$, $|\sigma|\ge 1$, $r\in (0,\infty)$, we have
\begin{align}\label{serr}
| {{\mathcal{R}_0}(\frac{1}{2} + \sigma ,x,y)} | \le \left\{ \begin{array}{l}
 C| {\log r}|,\mbox{  }\mbox{  }| {r\sigma } | \le 1 \\
 C{| \sigma |^{ - \frac{1}{2}}}{e^{ - (\frac{1}{2} + {\Re} \sigma )r}},| {r\sigma }| \ge 1 \\
 \end{array} \right.
\end{align}
Furthermore, for $\Re \sigma\ge0$, $|\sigma|\le 1$, $r\in (0,\infty)$, we have
\begin{align}\label{jiujinfa4}
\left| {{\nabla _x}{\mathcal{R}_0}(\frac{1}{2} + \sigma ,x,y)} \right| \le \left\{ \begin{array}{l}
 C{r^{ - 2}}{( {\sinh r})^2}( {{{\cosh }^2}r - 1} )^{ - \frac{1}{2}},\mbox{  }| r| \le 1 \\
 C{| \sigma |^{\frac{1}{2}}}{e^{ - (\frac{3}{2} + {\Re}\sigma )r}}{( {\sinh r})^2}{( {{{\cosh }^2}r - 1})^{ - \frac{1}{2}}},| r | \ge 1 \\
\end{array} \right.
\end{align}
And for $\Re \sigma\ge0$, $|\sigma|\ge 1$, $r\in (0,\infty)$, we have
\begin{align}\label{sedr564r}
\left| {{\nabla _x}{\mathcal{R}_0}(\frac{1}{2} + \sigma ,x,y)} \right| \le \left\{ \begin{array}{l}
 C{r^{ - 2}}{\left( {\sinh r} \right)^2}{\left( {{{\cosh }^2}r - 1} \right)^{ - \frac{1}{2}}},\mbox{  }\mbox{  }\left| {r\sigma } \right| \le 1 \\
 C{\left| \sigma  \right|^{\frac{1}{2}}}{e^{ - (\frac{3}{2} + {\Re}\sigma )r}}{\left( {\sinh r} \right)^2}{\left( {{{\cosh }^2}r - 1} \right)^{ - \frac{1}{2}}},| {r\sigma }| \ge 1 \\
 \end{array} \right.
\end{align}

\end{lemma}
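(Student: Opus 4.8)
The plan is to start from the classical formula for the free resolvent kernel on $\mathbb{H}^{2}$. With $r=d(x,y)$ one has, for $\Re\sigma>0$,
\[
\mathcal{R}_0(\tfrac12+\sigma,x,y)=\frac{1}{2\pi}\,Q_{\sigma-\frac12}(\cosh r)=\frac{1}{2\pi}\int_r^{\infty}\frac{e^{-\sigma t}}{\sqrt{2(\cosh t-\cosh r)}}\,dt,
\]
where $Q_\nu$ is the Legendre function of the second kind; this identity, together with the bounds (\ref{sedr4r}), (\ref{serr}) for the kernel itself (Corollary~3.2 and Lemma~3.3 of \cite{BaMa}), is what I take as given, so the only new content is (\ref{jiujinfa4}) and (\ref{sedr564r}). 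Since the kernel depends on $x$ only through $r=d(x,y)$ and $|\nabla_x r|\equiv1$ on $\mathbb{H}^{2}\setminus\{y\}$ (no cut locus), I have $\nabla_x\mathcal{R}_0(\tfrac12+\sigma,x,y)=\frac{\sinh r}{2\pi}\,Q_{\sigma-\frac12}'(\cosh r)\,\nabla_x r$, so it suffices to bound $\sinh r\,|Q_{\sigma-\frac12}'(\cosh r)|$; note that the weight $(\sinh r)^{2}(\cosh^{2}r-1)^{-1/2}$ on the right of (\ref{jiujinfa4})--(\ref{sedr564r}) is simply $\sinh r$ (as $\cosh^{2}r-1=\sinh^{2}r$), kept in that form because it is the shape produced by the argument below. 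I would then treat the near and far regimes by two different tools.

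For the \emph{far} regime ($|r|\ge1$, resp. $|r\sigma|\ge1$) I would differentiate via the Legendre identity $(z^{2}-1)Q_\nu'(z)=(\nu+1)\bigl(Q_{\nu+1}(z)-zQ_\nu(z)\bigr)$, which with $z=\cosh r$, $\nu=\sigma-\tfrac12$ gives
\[
\sinh r\,Q_{\sigma-\frac12}'(\cosh r)=\frac{\sigma+\tfrac12}{\sinh r}\Bigl(Q_{\sigma+\frac12}(\cosh r)-\cosh r\,Q_{\sigma-\frac12}(\cosh r)\Bigr).
\]
The key observation is that $Q_{\sigma+\frac12}(\cosh r)=2\pi\,\mathcal{R}_0(\tfrac12+(\sigma+1),x,y)$ is again a free resolvent kernel, at the \emph{shifted}, more coercive parameter $\sigma'=\sigma+1$ (so $\Re\sigma'\ge1$, $|\sigma'|\ge1$, $|r\sigma'|\ge1$); hence both $Q_{\sigma+\frac12}(\cosh r)$ and $Q_{\sigma-\frac12}(\cosh r)$ are controlled by (\ref{sedr4r})--(\ref{serr}). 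Inserting those bounds, the term $\cosh r\,Q_{\sigma-\frac12}(\cosh r)$ dominates (the $\cosh r\sim e^{r}$ costing one power of $e^{r}$), and after multiplying by $\frac{|\sigma+\frac12|}{\sinh r}$ and using $\sinh r\sim e^{r}/2$ for $r\ge1$ one recovers the claimed $e^{-(\frac32+\Re\sigma)r}\sinh r$ decay; the factor $\sigma+\tfrac12$ coming out of the differentiation identity accounts for the shift in the power of $|\sigma|$ relative to the kernel bounds of \cite{BaMa}.

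For the \emph{near} regime ($|r|\le1$, resp. $|r\sigma|\le1$, where the stated bound is $\lesssim r^{-2}\sinh r\sim r^{-1}$) I would instead use the integral representation, writing
\[
\sinh r\,Q_{\sigma-\frac12}'(\cosh r)=\frac{\sigma+\tfrac12}{\sinh r}\int_r^{\infty}\frac{e^{-\sigma t}\bigl(e^{-t}-\cosh r\bigr)}{\sqrt{2(\cosh t-\cosh r)}}\,dt.
\]
The decisive point is that the numerator vanishes like $-\sinh r$ as $t\to r^{+}$ (indeed $e^{-r}-\cosh r=-\sinh r$), which is exactly the cancellation that tames the diagonal singularity: splitting at $t=1$, one has $|e^{-t}-\cosh r|\lesssim t$ on $[r,1]$ and $\lesssim\cosh r$ on $[1,\infty)$, while $\cosh t-\cosh r\gtrsim t^{2}-r^{2}$ near the diagonal, so the crude bound $\int_r^{1}\frac{t\,dt}{\sqrt{t^{2}-r^{2}}}+\int_1^{\infty}\frac{\cosh r\,dt}{\sqrt{\cosh t-\cosh r}}\lesssim1$ suffices when $|\sigma|\le1$. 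When $|\sigma|\ge1$ this crude estimate loses a factor $|\sigma|$, and there one must keep the oscillation/decay of $e^{-\sigma t}$: on the part $t\lesssim1$ the approximations $\cosh t-\cosh r=\frac{t^{2}-r^{2}}{2}(1+O(t^{2}))$, $e^{-t}-\cosh r=-t(1+O(t))$ reduce the integral to the Bessel integral $\int_r^{\infty}\frac{t\,e^{-\sigma t}}{\sqrt{t^{2}-r^{2}}}\,dt=r\,K_1(\sigma r)$, and the standard bound $|K_1(z)|\lesssim|z|^{-1}$ on $\{|z|\le1,\ \Re z\ge0\}$ yields $\lesssim r^{-1}$ after multiplying by $\frac{|\sigma+\frac12|}{\sinh r}$; the lower-order error and the exponentially decaying tail $t\gtrsim1$ are handled crudely.

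The main obstacle is precisely this last point: getting the sharp near-diagonal rate $r^{-2}\sinh r$ \emph{uniformly in} $\sigma$ over $\{\Re\sigma\ge0\}$. A black-box use of (\ref{sedr4r})--(\ref{serr}) in the Legendre identity produces a spurious logarithm, and the crude integral bound loses a power of $|\sigma|$ for large $|\sigma|$; both defects are repaired only by exploiting the exact manner in which $e^{-t}-\cosh r$ vanishes at $t=r$ together with the $K_1$-type cancellation of the oscillatory factor $e^{-\sigma t}$ — the same mechanism that underlies the $|\sigma|^{-1/2}$ gain in (\ref{serr}). The far-field estimate, by contrast, is essentially mechanical once the derivative has been rewritten through shifted resolvent kernels already estimated in \cite{BaMa}, so the bulk of the work is in the near-diagonal analysis and in matching the two regimes across $|r\sigma|\sim1$.
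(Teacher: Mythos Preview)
Your argument is essentially correct, but the paper takes a different and considerably shorter route. Rather than the three--term recurrence $(z^{2}-1)Q_\nu'=(\nu+1)(Q_{\nu+1}-zQ_\nu)$ that keeps you within the $\mathbb{H}^{2}$ framework, the paper uses the \emph{order--raising} identity $(z^{2}-1)^{1/2}\frac{d}{dz}Q^{0}_{\nu}(z)=Q^{1}_{\nu}(z)$ and recognises $Q^{1}_{\nu}$ as (a multiple of) the resolvent kernel on $\mathbb{H}^{4}$: writing $[{}^{3}\mathcal{R}]_0(s,x,y)=c(3)e^{-i\pi}(\sinh r)^{-1}Q^{1}_{s-2}(\cosh r)$ one obtains the exact relation
\[
\partial_r\mathcal{R}_0\bigl(\tfrac12+\sigma,x,y\bigr)=c\,(\cosh^{2}r-1)^{-1/2}(\sinh r)^{2}\,[{}^{3}\mathcal{R}]_0\bigl(\tfrac32+\sigma,x,y\bigr),
\]
and both (\ref{jiujinfa4}) and (\ref{sedr564r}), in the near \emph{and} far regimes simultaneously, follow by quoting the bounds of \cite{BaMa} for the four--dimensional resolvent. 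This also explains the otherwise curious factor $(\sinh r)^{2}(\cosh^{2}r-1)^{-1/2}$ on the right--hand sides. What your approach buys is self--containment: you never leave $\mathbb{H}^{2}$ and need only the two--dimensional estimates (\ref{sedr4r})--(\ref{serr}) already at hand, at the price of the hands--on $K_{1}$ analysis near the diagonal. In the paper's route the ``main obstacle'' you isolate simply does not arise, because the local $r^{-2}$ singularity is the native behaviour of the $\mathbb{H}^{4}$ Green kernel and is already contained in the cited \cite{BaMa} bounds.
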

\begin{proof}
Let $[{}^n\mathcal{R}]_0(s,x,y)$ denote the Schwartz kernel of the resolvent of the Laplace-Beltrami operator in $\Bbb H^{n+1}$, i.e., the kernel of  $(-\Delta_{\Bbb H^{n+1}}-s(n-s))^{-1}$. Then $[{ }^n{\mathcal R}]_0(s,x,y)$ can be written in terms of Legendre functions (\cite{Taylor}),
\begin{align}\label{ju6cfder1}
[{ }^n\mathcal{R}]_0(s,x,y)=c(n)e^{-i\pi\mu}(\sinh r)^{-\mu}Q^{\mu}_{\nu}(\cosh r),
\end{align}
where $r=d(x,y)$, $\nu=s-\frac{n+1}{2}$ and $\mu=\frac{n-1}{2}$.
Particularly, if choosing $s=\frac{n}{2}+\sigma$ for $n=1,3$ we have
\begin{align}
[{ }^1\mathcal{R}]_0(\frac{1}{2}+\sigma,x,y)&=\mathcal{R}_0(\frac{1}{2}+\sigma,x,y)\label{ju46cfder}\\
[{ }^1\mathcal{R}]_0(\frac{1}{2}+\sigma,x,y)&=c(1)Q^{0}_{\sigma-\frac{1}{2}}(\cosh r),\label{ju6cfder}\\
[{ }^3\mathcal{R}]_0(\frac{1}{2}+\sigma,x,y)&=c(3)e^{-i\pi}(\sinh r)^{-1}Q^{1}_{\sigma-\frac{1}{2}}(\cosh r),\label{ju6cfd1er}
\end{align}
Recall the formula for the derivative of the second class Legendre functions (see for instance \cite{Watson})
\begin{align}\label{hsderuag87}
{( z^2 - 1)^{\frac{1}{2}}}\frac{d}{dz}{Q^{0}_\nu}(z) = Q_\nu^1(z).
\end{align}
Therefore, (\ref{ju46cfder}) combined with (\ref{hsderuag87}),  (\ref{ju6cfder}) and (\ref{ju6cfd1er}) implies
\begin{align*}
 {\partial _r}{\mathcal{R}_0}(\frac{1}{2} + \sigma ,x,y) &= c(1){\partial _r}{Q_\kappa }(\cosh r) = c(1)( {\cosh^2}r - 1)^{ - \frac{1}{2}}\sinh rQ_\kappa ^1(\cosh r) \\
 &= c({\cosh^2}r - 1 )^{ - \frac{1}{2}}{ {\sinh^2 r} }[{}^3\mathcal{R}]_0(\frac{3}{2} + \sigma ,x,y).
\end{align*}
Hence, (\ref{sedr564r}) and (\ref{jiujinfa4}) follow from the fact $|\nabla r|=1$ and the corresponding resolvent estimates for $[{ }^3\mathcal{R}]_0$ in [Corollary 3.2, Lemma 3.3,\cite{BaMa}].
\end{proof}

The free resolvent $\mathcal{R}_0(z)=(-\Delta-z(1-z))^{-1}$ has the following basic estimates in weighted $L^2$ spaces.
\begin{lemma}\label{xianren}
For $z\in\Bbb C$ with $\Re z>0$, we have for $\alpha>0$ sufficiently small
\begin{align}
\|\rho^{\alpha}\mathcal{R}_{0}(\frac{1}{2}+z)f\|_{L^2}&\lesssim|z|^{-1}\|f\rho^{-\alpha}\|_{L^2}\label{huashen1}\\
\|\rho^{\alpha}\mathcal{R}_{0}(\frac{1}{2}+z)f\|_{L^2}&\lesssim\|f\rho^{-\alpha}\|_{L^2}\label{huashen2}\\
\|\rho^{\alpha}\nabla \mathcal{R}_{0}(\frac{1}{2}+z)f\|_{L^2}&\lesssim\|f\rho^{-\alpha}\|_{L^2}.\label{huashen3}
\end{align}
\end{lemma}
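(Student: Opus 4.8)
The plan is to reduce all three weighted $L^2$ bounds to pointwise control of the resolvent kernel $\mathcal{R}_0(\tfrac12+z,x,y)$ (and its gradient) from Lemma \ref{s564rt67}, combined with a Schur-type test adapted to the hyperbolic measure, i.e. the Kunze--Stein phenomenon of Lemma \ref{stein}. First I would write $z=\sigma$ with $\Re\sigma>0$ and, using (\ref{yudf78i2}), represent $\mathcal{R}_0(\tfrac12+\sigma)f = f * k_\sigma$ where $k_\sigma(r)$ is the radial kernel with $|k_\sigma(r)|$ controlled by (\ref{sedr4r})--(\ref{serr}). The weight $\rho^\alpha(x)=e^{-\alpha d(x,O)}$ is not radial about the convolution variable, so I would split the estimate: bound $\|\rho^\alpha (f*k_\sigma)\|_{L^2}$ by $\|\rho^\alpha ((\rho^{-\alpha}f)*(\rho^\alpha \text{-shifted } k_\sigma))\|$, using the elementary inequality $e^{-\alpha d(x,O)} \le e^{-\alpha d(y,O)} e^{\alpha d(x,y)}$; this transfers the two weights onto $f$ and onto an exponentially-growing-by-$e^{\alpha r}$ modification of the kernel. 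Since $\alpha$ is taken sufficiently small and the kernel decays like $e^{-(\frac12+\Re\sigma)r}$ for large $r$, the product $e^{\alpha r}|k_\sigma(r)|$ still decays like $e^{-(\frac12+\Re\sigma-\alpha)r}$, which is integrable against $\sinh r\,(\varphi_0(r))^P\,dr$ for $P$ close to $1$; that is exactly the quantity appearing in Lemma \ref{stein} with $m=k=2$.

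With that reduction in hand, (\ref{huashen2}) follows directly: apply Lemma \ref{stein} with $m=k=2$, so $P=1$, $Q=2$, giving
\begin{align*}
\|\rho^\alpha \mathcal{R}_0(\tfrac12+\sigma)f\|_{L^2}\lesssim \|\rho^{-\alpha}f\|_{L^2}\left(\int_0^\infty \sinh r\,\varphi_0(r)\,e^{2\alpha r}|k_\sigma(r)|^2\,dr\right)^{1/2},
\end{align*}
and the integral is finite and bounded uniformly in $\sigma$ once $\alpha$ is small, because near $r=0$ one has $|k_\sigma(r)|\lesssim|\log r|$ (square-integrable against $\sinh r\,dr$) and for $r\gtrsim 1$ one has the exponential gain above. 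For (\ref{huashen1}) I would exploit the extra decay in $|\sigma|$: when $|\sigma|\le 1$, the kernel already carries a factor $|\sigma|^{-1/2}$ for $r\ge 1$, and near the origin one gets the remaining $|\sigma|^{-1/2}$ by noting the $|\log r|$ bound holds only on $r\lesssim 1$ and splitting there with a more careful use of (\ref{serr}) on the region $|r\sigma|\le 1$ versus $|r\sigma|\ge 1$; tracking the $\sigma$-powers through the Kunze--Stein integral produces the claimed $|\sigma|^{-1}$. For (\ref{huashen3}) I would replace $k_\sigma$ by the gradient kernel, using (\ref{jiujinfa4})--(\ref{sedr564r}); note that $(\sinh r)^2(\cosh^2 r-1)^{-1/2}=\sinh r$, so the gradient kernel has the same large-$r$ exponential decay as $k_\sigma$ up to a harmless polynomial factor, while near $r=0$ the bound $r^{-2}(\sinh r)^2(\cosh^2 r -1)^{-1/2}\sim r^{-1}$ is still square-integrable against $\sinh r\,dr$ in dimension two, so the same Kunze--Stein argument closes it.

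The main obstacle I anticipate is the bookkeeping of the small-$r$ singularity together with the $\sigma$-dependence: one must verify that $|\log r|$ (for the kernel) and $r^{-1}$ (for its gradient) are both square-integrable with respect to $\sinh r\,dr\sim r\,dr$ near zero — which they are — and simultaneously that the transition radius between the two regimes in (\ref{serr}) (namely $|r\sigma|\sim 1$) does not spoil the uniform-in-$\sigma$ bound for (\ref{huashen2})--(\ref{huashen3}) while still yielding the genuine $|\sigma|^{-1}$ gain for (\ref{huashen1}). A secondary technical point is justifying the weight-transfer inequality rigorously, i.e. that moving $\rho^{\pm\alpha}$ across the convolution costs only $e^{\pm\alpha d(x,y)}$ and that this cost is absorbed by choosing $\alpha < \tfrac12$ (indeed $\alpha\ll 1$), so the effective kernel $e^{\alpha r}|k_\sigma(r)|$ retains enough decay to be admissible in Lemma \ref{stein}; this is where the hypothesis "$\alpha>0$ sufficiently small" is used.
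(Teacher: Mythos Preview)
Your Kunze--Stein strategy is sound for (\ref{huashen2}) and for (\ref{huashen3}) in the regime $|z|\le 1$, and indeed that is exactly what the paper does there. But there is a genuine gap in the high-frequency case $|z|\ge 1$ with $\Re z$ small (the limiting-absorption regime), where both (\ref{huashen1}) and (\ref{huashen3}) fail by your method. The pointwise bounds of Lemma~\ref{s564rt67} give at best $|\mathcal R_0(\tfrac12+z;r)|\lesssim |z|^{-1/2}e^{-(1/2+\Re z)r}$ and $|\nabla_x\mathcal R_0(\tfrac12+z;r)|\lesssim |z|^{1/2}e^{-(1/2+\Re z)r}$ for $r\ge |z|^{-1}$; these are absolute-value bounds and discard the oscillation of the kernel. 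If you feed them into the Kunze--Stein integral (with $m=k=2$, so $P=Q=1$, not $Q=2$ as you wrote) and take the weight transfer correctly---note that $\rho^\alpha(x)\rho^\alpha(y)\le e^{-\alpha d(x,y)}$, so the effective kernel gains $e^{-\alpha r}$, not loses it---you find, for $|z|\ge1$, that the region $r\in[|z|^{-1},1]$ already contributes $\sim |z|^{-1/2}$ to the resolvent bound and $\sim |z|^{+1/2}$ to the gradient bound, uniformly as $\Re z\to 0^+$. The first is not $\lesssim |z|^{-1}$ and the second blows up, so neither (\ref{huashen1}) nor (\ref{huashen3}) closes this way.

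The paper supplies the two missing ideas. For (\ref{huashen1}) it abandons the kernel bounds altogether and uses the wave-propagator representation
\[
(-\Delta-\tfrac14-(\lambda+i\mu)^2)^{-1}=\frac{i\,\mathrm{sgn}\,\mu}{\lambda+i\mu}\int_0^\infty e^{i(\mathrm{sgn}\,\mu)\lambda t}e^{-|\mu|t}\cos\!\big(t\sqrt{-\Delta-1/4}\big)\,dt,
\]
together with the argument of Borthwick--Marzuola (Proposition~4.1 of \cite{BaMa}); the prefactor $(\lambda+i\mu)^{-1}$ is where the full $|z|^{-1}$ comes from. For (\ref{huashen3}) with $|z|\ge 1$ the paper does not estimate the gradient kernel at all: instead it expands $\|\rho^\alpha\nabla\mathcal R_0\rho^\alpha f\|_{L^2}^2$ by integration by parts, uses $-\Delta=(-\Delta-\tfrac14+z^2)+(\tfrac14-z^2)$ to trade $\nabla$ for the resolvent itself, and then closes with (\ref{huashen1}). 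You will need both of these steps; the Schur/Kunze--Stein test on $|k_\sigma|$ alone cannot see the cancellation that makes the limiting-absorption estimate true.
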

\begin{proof}
(\ref{huashen1}) follows by the same arguments in [\cite{BaMa}, Proposition 4.1] and the following wave operator expression for the free resolvent:
\begin{align}
(-\Delta-\frac{1}{4}-(\lambda+i\mu)^2)^{-1}=\Lambda(\lambda,\mu)\int^{\infty}_0e^{i({\rm {sgn}} \mu)\lambda t}e^{-|\mu|t}(\cos t\sqrt{-\Delta-\frac{1}{4}})dt,
\end{align}
where $\Lambda(\lambda,\mu)=\frac{i{{\rm{sgn}}\mu}}{(\lambda+i\mu)}$.
By (\ref{sedr564r}), (\ref{jiujinfa4}), the kernel of  $\rho^{\alpha}\nabla \mathcal{R}_{0}\rho^{\alpha}$ is bounded by
$$
\left\{ \begin{array}{l}
 C{e^{ - (\alpha {+}\frac{1}{2} )r}}{r^{ - \frac{1}{2}}},\mbox{  }\left| z \right| \le 1 \\
 C{r^{ - \frac{1}{2}}},\mbox{  }\left| z \right| \ge 1,\left| {zr} \right| \le 1 \\
 C{\left| z \right|^{\frac{1}{2}}}{e^{ - (\alpha {+}\frac{1}{2} )r}}{r^{ - \frac{1}{2}}},\mbox{  }\left| z \right| \ge 1,\left| {zr} \right| \ge 1 \\
\end{array} \right.
$$
Then, for $|z|\le1$ the Kunze-Stein phenomenon yields
\begin{align}\label{p65gvp}
\|\rho^{\alpha}\nabla\mathcal{R}_0(\frac{1}{2}+z)f\|_{L^2}\lesssim \|\rho^{\alpha}f\|_{L^2},
\end{align}
which shows (\ref{huashen3}) in the case $|z|\le1$.
By Sobolev embedding and Leibnitz rule,
\begin{align}
{\left\| {{\rho ^\alpha }{{\cal R}_0}(\frac{1}{2}  + z){\rho ^\alpha }f} \right\|_{{L^2}}} &\le C{\left\| {\nabla {\rho ^\alpha }{{\cal R}_0}(\frac{1}{2}  + z){\rho ^\alpha }f} \right\|_{{L^2}}} \label{poingy8}\\
&\le C\alpha {\left\| {{\rho ^\alpha }{{\cal R}_0}(\frac{1}{2}  + z){\rho ^\alpha }f} \right\|_{{L^2}}} + C{\left\| {{\rho ^\alpha }\nabla {{\cal R}_0}(\frac{1}{2}  + z){\rho ^\alpha }f} \right\|_{{L^2}}}.\nonumber
\end{align}
Since (\ref{huashen1}) has shown the LHS of (\ref{poingy8}) is finite provided $\Re z>0$, choosing $\alpha>0$ to be sufficiently small, we get
\begin{align*}
{\left\| {{\rho ^\alpha }{{\cal R}_0}(\frac{1}{2}  + z){\rho ^\alpha }f} \right\|_{{L^2}}} \le C{\left\| {{\rho ^\alpha }\nabla {{\cal R}_0}(\frac{1}{2}  + z){\rho ^\alpha }f} \right\|_{{L^2}}}.
\end{align*}
Hence (\ref{huashen2}) follows by  (\ref{huashen1}) when $|z|\ge 1$ and by (\ref{p65gvp}) when $|z|\le1$. The rest is to prove (\ref{huashen3}) for $|z|\ge1$. Integration by parts and $|\nabla d(x,0)|=1$ yield
\begin{align*}
&\left\| {{\rho ^\alpha }\nabla \mathcal{R}_0(\frac{1}{2}  + z){\rho ^\alpha }f} \right\|_{{L^2}}^2
= \left\langle {{\rho ^\alpha }\nabla \mathcal{R}_0(\frac{1}{2}  + z){\rho ^\alpha }f,{\rho ^\alpha }\nabla \mathcal{R}_0(\frac{1}{2}  + z){\rho ^\alpha }f} \right\rangle  \\
&=  - \left\langle {{\rho ^{2\alpha }}\Delta \mathcal{R}_0(\frac{1}{2}  + z){\rho ^\alpha }f,\mathcal{R}_0(\frac{1}{2}  + z){\rho ^\alpha }f} \right\rangle
\\
&\mbox{  }\mbox{  }\mbox{  }- \left\langle {\left( {\nabla {\rho ^{2\alpha }}} \right)\cdot\nabla \mathcal{R}_0(\frac{1}{2}  + z){\rho ^\alpha }f,\mathcal{R}_0(\frac{1}{2}  + z){\rho ^\alpha }f} \right\rangle  \\
&= \left\langle {{\rho ^{2\alpha }}\left( { - \Delta  - {\frac{1}{4}} + {z^2}} \right)\mathcal{R}_0(\frac{1}{2}  + z){\rho ^\alpha }f,\mathcal{R}_0(\frac{1}{2}  + z){\rho ^\alpha }f} \right\rangle \\
&\mbox{  }\mbox{  }+ O\left( {\alpha \left\langle {{\rho ^\alpha }\nabla \mathcal{R}_0(\frac{1}{2}  + z){\rho ^\alpha }f,{\rho ^\alpha }\mathcal{R}_0(\frac{1}{2}  + z){\rho ^\alpha }f} \right\rangle } \right) \\
&\mbox{  }\mbox{  }+ \left( {{\frac{1}{4}} - {z^2}} \right)\left\langle {{\rho ^\alpha }\mathcal{R}_0(\frac{1}{2}  + z){\rho ^\alpha }f,{\rho ^\alpha }\mathcal{R}_0(\frac{1}{2}  + z){\rho ^\alpha }f} \right\rangle.
\end{align*}
Thus  for $|z|\ge 1$, (\ref{huashen1}) gives
\begin{align*}
 &\left\| {{\rho ^\alpha }\nabla \mathcal{R}_0(\frac{1}{2}  + z){\rho ^\alpha }f} \right\|_{{L^2}}^2\\
 &\lesssim {\left\| f \right\|_{{L^2}}}{\left\| {{\rho ^\alpha }\mathcal{R}_0(\frac{1}{2}  + z){\rho ^\alpha }f} \right\|_{{L^2}}} + \alpha \left\| {{\rho ^\alpha }\nabla \mathcal{R}_0(\frac{1}{2}  + z){\rho ^\alpha }f} \right\|_{{L^2}}^2 \\
 &+ \left( {{{\left| z \right|}^2} + 1} \right)\left\| {{\rho ^\alpha }\mathcal{R}_0(\frac{1}{2}  + z){\rho ^\alpha }f} \right\|_{{L^2}}^2\\
 &\lesssim \alpha \left\| {{\rho ^\alpha }\nabla \mathcal{R}_0(\frac{1}{2}  + z){\rho ^\alpha }f} \right\|_{{L^2}}^2+\|f\|^2_{L^2}.
\end{align*}
Let $\alpha>0$ be sufficiently small, we obtain for $|z|\ge1$
\begin{align*}
\left\| {{\rho ^\alpha }\nabla \mathcal{R}_0(\frac{1}{2}  + z){\rho ^\alpha }f} \right\|_{{L^2}}\lesssim \|f\|_{L^2},
\end{align*}
which combined with (\ref{p65gvp}) gives (\ref{huashen3}).
\end{proof}

\begin{lemma}
Recall $\mathcal{R}_H(z)=(H-z(1-z))^{-1}$, then for all $\Re z>0$, one has when $0<\mu_1\ll 1,0<\alpha\ll1$
\begin{align}\label{chun1}
\|\rho^{\alpha}\mathcal{R}_H(z+\frac{1}{2})f\|_{L^2}\le C\min(1,|z|^{-1})\|\rho^{-\alpha}f\|_{L^2}.
\end{align}
\end{lemma}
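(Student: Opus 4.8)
The plan is to run a Born/Neumann series (equivalently, a contraction) built on the resolvent identity, using the free weighted resolvent bounds of Lemma~\ref{xianren} as building blocks and the smallness $\mu_1\ll1$ to close the iteration. Since $(z+\frac12)\bigl(1-(z+\frac12)\bigr)=\frac14-z^2$, we have $\mathcal R_0(z+\frac12)=(-\Delta-(\frac14-z^2))^{-1}$ and $\mathcal R_H(z+\frac12)=(-\Delta+W-(\frac14-z^2))^{-1}$ with $W=V+X$, and the factorization $-\Delta+W-(\frac14-z^2)=(-\Delta-(\frac14-z^2))\bigl(I+\mathcal R_0(z+\frac12)W\bigr)$ gives $\mathcal R_H(z+\frac12)=\bigl(I+\mathcal R_0(z+\frac12)W\bigr)^{-1}\mathcal R_0(z+\frac12)$. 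Writing $f=\rho^{\alpha}h$ and $v=\mathcal R_H(z+\frac12)f$, one gets from $(-\Delta-(\frac14-z^2))v=f-Wv$ the two identities
\begin{align*}
\rho^{\alpha}v=S_zh-S_z\bigl(\rho^{-\alpha}Wv\bigr),\qquad \rho^{\alpha}\nabla v=\widetilde S_zh-\widetilde S_z\bigl(\rho^{-\alpha}Wv\bigr),
\end{align*}
where $S_z:=\rho^{\alpha}\mathcal R_0(z+\frac12)\rho^{\alpha}$ and $\widetilde S_z:=\rho^{\alpha}\nabla\mathcal R_0(z+\frac12)\rho^{\alpha}$.

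First I would record the building-block bounds: by (\ref{huashen1})--(\ref{huashen2}), $S_z$ is bounded on $L^2$ with $\|S_z\|\lesssim\min(1,|z|^{-1})$, and by (\ref{huashen3}), $\widetilde S_z$ is bounded on $L^2$ with norm $O(1)$. Next I would bound the perturbation term in the weighted norm. Since $\rho=e^{-d(\cdot,O)}\le1$ and $|\nabla\rho|=\rho$, writing $\partial_jv=\rho^{-\alpha}\partial_j(\rho^{\alpha}v)+O(\rho^{-\alpha})\,\rho^{\alpha}v$ and taking $\alpha$ small enough that $2\alpha\le\varrho$, the smallness hypothesis (\ref{Vab1}) gives $\|\rho^{-2\alpha}V\|_{L^\infty}+\|\rho^{-2\alpha}|A|\|_{L^\infty}\lesssim\mu_1$, hence, with $P:=\|\rho^{\alpha}v\|_{L^2}$ and $Q:=\|\rho^{\alpha}\nabla v\|_{L^2}$,
\begin{align*}
\|\rho^{-\alpha}Wv\|_{L^2}\lesssim\|\rho^{-2\alpha}V\|_{L^\infty}P+\|\rho^{-2\alpha}|A|\|_{L^\infty}(P+Q)\lesssim\mu_1(P+Q).
\end{align*}
Feeding this into the two identities and using the building-block bounds yields the coupled inequalities $P\le C\min(1,|z|^{-1})\|h\|_{L^2}+C\min(1,|z|^{-1})\mu_1(P+Q)$ and $Q\le C\|h\|_{L^2}+C\mu_1(P+Q)$.

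To conclude I would first dispose of a priori finiteness of $P,Q$: either one expands the Born series $\mathcal R_H(z+\frac12)=\sum_{n\ge0}(-\mathcal R_0 W)^n\mathcal R_0$ and checks, by repeatedly inserting $\rho^{\alpha}\rho^{-\alpha}=I$ and using the same building blocks, that the $n$-th conjugated term has weighted-$\mathcal H^1$ operator norm $\le C(C\mu_1)^n$ — so the series converges for $\mu_1\ll1$, which simultaneously shows $\frac14-z^2$ lies outside the spectrum of $H$ and gives the crude bound $P+Q\lesssim\|h\|_{L^2}$ — or, when $\frac14-z^2$ is already outside the spectrum, one invokes $v\in\mathcal H^1$ (Lemma~\ref{povxde45}) together with $\rho\le1$. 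Then, taking $\mu_1$ small, the second coupled inequality gives $Q\lesssim\|h\|_{L^2}+\mu_1P$, substituting into the first and absorbing the $C\mu_1\min(1,|z|^{-1})P$ term on the left yields $P\lesssim\min(1,|z|^{-1})\|h\|_{L^2}$, which is (\ref{chun1}) after replacing $\rho^{\alpha}h$ by $f$.

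The main obstacle is the first-order term $X=2h^{ij}A_i\partial_j$: it forces one to carry $\rho^{\alpha}\nabla v$ alongside $\rho^{\alpha}v$ and hence to rely on the gradient estimate (\ref{huashen3}), which carries no $|z|^{-1}$ decay. The point that rescues the sharp factor $\min(1,|z|^{-1})$ is structural: that decay is only ever needed on the outermost free-resolvent factor $S_z$, while every perturbative insertion (of $V$ or $X$) costs merely $O(\mu_1)$; this is exactly what the coupled-inequality bootstrap encodes. A secondary, routine point is the commutator bookkeeping $\partial_j(\rho^{\alpha}\,\cdot\,)$ versus $\rho^{\alpha}\partial_j$, harmless because $|\nabla\rho|=\rho$.
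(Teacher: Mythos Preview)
Your proof is correct and follows essentially the same strategy as the paper: use the resolvent identity plus smallness of $\mu_1$ to invert the perturbation via a Neumann-series/contraction argument built on the free weighted bounds (\ref{huashen1})--(\ref{huashen3}). The only cosmetic difference is that the paper uses the factorization $\mathcal R_H=\mathcal R_0(I+W\mathcal R_0)^{-1}$ and shows $\|W\mathcal R_0\|_{\rho^{\alpha}L^2\to\rho^{\alpha}L^2}\lesssim\mu_1$ directly (absorbing the gradient via (\ref{huashen3}) in one shot), whereas you run the equivalent fixed-point iteration $v=\mathcal R_0 f-\mathcal R_0 Wv$ and track $\rho^{\alpha}v$ and $\rho^{\alpha}\nabla v$ as a coupled system; both unwind to the same Born series and the same estimate.
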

\begin{proof}
Formally we have the identity
\begin{align}
\mathcal{R}_H(z+\frac{1}{2})=\mathcal{R}_0(z+\frac{1}{2})\big(I+W\mathcal{R}_0(z+\frac{1}{2})\big)^{-1}.
\end{align}
First we prove the operator $(I+W\mathcal{R}_0)^{-1}$ is well-defined and uniformly bounded in $L(\rho ^{\alpha}L^2,\rho^{\alpha}L^2)$.
By Lemma \ref{xianren}, choose $3\alpha<\varrho$,
\begin{align*}
{\left\| {{\rho ^{ - \alpha }}X{\mathcal{R}_0}f} \right\|_{{L^2}}}& \lesssim \left\| |A|{\rho ^{ - 2\alpha }} \right\|_{{L^\infty }}{\left\| {{\rho ^\alpha }\nabla{\mathcal{R}_0}f} \right\|_{{L^2}}}\\
&\lesssim \mu_1{\left\| {{\rho ^\alpha }\nabla{\mathcal{R}_0}{\rho ^\alpha }} \right\|_{{L^2} \to {L^2}}}{\left\| {{\rho ^{ - \alpha }}f} \right\|_{{L^2}}}.
\end{align*}
Meanwhile, we have
$$
{\left\| {{\rho ^{ - \alpha }}V{\mathcal{R}_0}f} \right\|_{{L^2}}} \lesssim \mu_1 {\left\| {{\rho ^\alpha }{\mathcal{R}_0}{\rho ^\alpha }} \right\|_{{L^2} \to {L^2}}}{\left\| {{\rho ^{ - \alpha }}f} \right\|_{{L^2}}}.
$$
Therefore, by Neumann series argument we conclude $I+W\mathcal{R}_0$ is invertible in $\rho^{\alpha}L^2$ with
\begin{align*}
{\left\| {{\rho ^{ - \alpha }}(I + W{\mathcal{R}_0})^{-1}f} \right\|_{{L^2}}} \le C{\left\| {{\rho ^{ - \alpha }}f} \right\|_{{L^2}}},
\end{align*}
where $C$ is independent of $z$.
Hence Lemma \ref{xianren} implies
\begin{align*}
{\left\| {{\rho ^\alpha }{\mathcal{R}_0}{{(I + W{\mathcal{R}_0})}^{ - 1}}f} \right\|_{{L^2}}} &\le C{\left\| {{\rho ^\alpha }{\mathcal{R}_0}{\rho ^\alpha }} \right\|_{{L^2} \to {L^2}}}{\left\| {{\rho ^{ - \alpha }}{{(I + W{\mathcal{R}_0})}^{ - 1}}f} \right\|_{{L^2}}}\\
&\lesssim{\left\| {{\rho ^{ - \alpha }}f} \right\|_{{L^2}}}.
\end{align*}
The $|z|^{-1}$ decay follows directly from (\ref{huashen1}).
\end{proof}

\begin{corollary}\label{tudiye}
Let $H$ satisfy the assumptions in Corollary \ref{3}.
If  $0<\mu_1\ll 1$, then the spectrum of $H$ is absolutely continuous and $\sigma(H)=[\frac{1}{4},\infty)$.
\end{corollary}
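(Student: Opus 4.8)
The plan is to combine Weyl's theorem with the limiting absorption principle already contained in the weighted resolvent bound (\ref{chun1}).

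\emph{Step 1: essential spectrum.} The proof of Lemma \ref{povxde45} shows that $W=V+X$ is relatively compact with respect to $-\Delta$: the part of $W(-\Delta+i)^{-1}$ localised to a large geodesic ball $B_K$ is compact by the Rellich embedding on $B_K$, while the exterior part has operator norm $O(e^{-\varrho K})$, so $W(-\Delta+i)^{-1}$ is a norm limit of compact operators. Hence $R_H(i)-R_{-\Delta}(i)=-R_H(i)WR_{-\Delta}(i)$ is compact, and Weyl's theorem gives $\sigma_{\mathrm{ess}}(H)=\sigma_{\mathrm{ess}}(-\Delta)=[\tfrac14,\infty)$; in particular $[\tfrac14,\infty)\subseteq\sigma(H)$.

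\emph{Step 2: no eigenvalues, hence $\sigma(H)=[\tfrac14,\infty)$.} Fix $\mu\in\Bbb R$ and $\epsilon\in(0,1)$ and write $\mu+i\epsilon=\tfrac14-z^2$ with $z=\sqrt{\tfrac14-\mu-i\epsilon}$ the principal branch; since $\Im(\tfrac14-\mu-i\epsilon)=-\epsilon<0$ one has $\Re z>0$, so $\mathcal{R}_H(z+\tfrac12)=(H-\mu-i\epsilon)^{-1}$ and (\ref{chun1}) applies. For $\phi\in C_c^{\infty}(\Bbb D;\Bbb C^2)$, Cauchy--Schwarz and (\ref{chun1}) yield
\begin{align*}
\big|\big\langle (H-\mu-i\epsilon)^{-1}\phi,\phi\big\rangle\big|
=\big|\big\langle\rho^{\alpha}\mathcal{R}_H(z+\tfrac12)\phi,\rho^{-\alpha}\phi\big\rangle\big|
\le C\min(1,|z|^{-1})\,\|\rho^{-\alpha}\phi\|_{L^2}^2 ,
\end{align*}
where $\min(1,|z|^{-1})$ stays bounded as $\epsilon\downarrow0$ for each fixed $\mu$ (it equals $1$ at the threshold $\mu=\tfrac14$, where $|z|=\sqrt{\epsilon}$, and is $\lesssim|\mu-\tfrac14|^{-1/2}$ otherwise). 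Writing $\mu_{\phi}=\langle E_H(\cdot)\phi,\phi\rangle$, we have $\Im\langle (H-\mu-i\epsilon)^{-1}\phi,\phi\rangle=\int\frac{\epsilon\,d\mu_{\phi}(\lambda)}{(\lambda-\mu)^2+\epsilon^2}\ge\epsilon^{-1}\mu_{\phi}(\{\mu\})$, so the bound above and $\epsilon\downarrow0$ force $\|E_H(\{\mu\})\phi\|_{L^2}^2=\mu_{\phi}(\{\mu\})=0$. Density of $C_c^{\infty}(\Bbb D;\Bbb C^2)$ in $L^2$ then gives $E_H(\{\mu\})=0$, i.e.\ $\mu\notin\sigma_{\mathrm{pp}}(H)$; thus $\sigma_{\mathrm{pp}}(H)=\varnothing$, in particular $\sigma_{\mathrm{disc}}(H)=\varnothing$, and with Step 1 this yields $\sigma(H)=\sigma_{\mathrm{ess}}(H)=[\tfrac14,\infty)$.

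\emph{Step 3: absolute continuity.} For $\phi\in C_c^{\infty}(\Bbb D;\Bbb C^2)$ the displayed estimate shows that $\frac1\pi\Im\langle (H-\mu-i\epsilon)^{-1}\phi,\phi\rangle$ — the Poisson integral of $\mu_{\phi}$ at $\mu+i\epsilon$ — is bounded uniformly in $\epsilon\in(0,1)$ for \emph{every} $\mu\in[\tfrac14,\infty)=\sigma(H)$; by the de la Vall\'ee Poussin differentiation theorem the singular part of $\mu_{\phi}$ vanishes, so $\phi$ lies in the absolutely continuous subspace $\mathcal H_{\mathrm{ac}}(H)$. Since $\mathcal H_{\mathrm{ac}}(H)$ is closed and contains the dense set $C_c^{\infty}(\Bbb D;\Bbb C^2)$, it is all of $L^2$, so $H$ is purely absolutely continuous; combined with Step 2, $\sigma(H)=\sigma_{\mathrm{ac}}(H)=[\tfrac14,\infty)$. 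The only point that needs care is the bookkeeping in the conformal change of spectral variable $\mu+i\epsilon=\tfrac14-z^2$: one must check that $\{\Re z>0\}$ is mapped exactly onto $\Bbb C\setminus[\tfrac14,\infty)$ and that the factor $\min(1,|z|^{-1})$ in (\ref{chun1}) does not blow up as $\epsilon\downarrow0$ for fixed $\mu$ — in particular at the threshold $\mu=\tfrac14$, which is the only spot where one might fear a loss, and where $\min(1,|z|^{-1})=1$ saves the argument.
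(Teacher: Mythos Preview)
Your proof is correct and follows essentially the same route as the paper: both combine Weyl's theorem for $\sigma_{\mathrm{ess}}(H)=[\tfrac14,\infty)$ with the uniform weighted resolvent bound (\ref{chun1}) to rule out singular spectrum. The only differences are cosmetic---the paper cites the $L^2$-criterion [Theorem~XIII.19, \cite{Mathphysics}] (namely $\sup_{0<\varepsilon<1}\int_c^d|\Im\langle f,R_H(\tau+i\varepsilon)f\rangle|^2\,d\tau<\infty$) in place of your pointwise de~la~Vall\'ee~Poussin argument, and simply writes ``by Weyl's criterion'' without spelling out the relative compactness of $W(-\Delta+i)^{-1}$ as you do.
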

\begin{proof}
In the beginning  of Section 2, we have shown $H$ is self-adjoint. By [Theorem XIII.19, Page 137,\cite{Mathphysics}] and the continuity of spectral projection operators, to prove the spectrum is absolutely continuous, it suffices to prove for any bounded interval $(c,d)$ and any $f\in C^{\infty}_c$
\begin{align}\label{pdoi890}
\mathop {\sup }\limits_{0 < \varepsilon  < 1}\int_c^d\left| \Im\left\langle {f,R_{H}(\tau + i\varepsilon )f} \right\rangle \right|^2d\tau < \infty.
\end{align}
Using (\ref{chun1}) one has for $\varepsilon>0$
$$\left| {\left\langle {f ,R_{H}(\tau + i\varepsilon )f } \right\rangle } \right| = \left| {\left\langle {f {\rho ^{ - \alpha }},{\rho ^\alpha }R_H(\tau + i\varepsilon ){\rho ^\alpha }{\rho ^{ - \alpha }}f} \right\rangle } \right| \lesssim \left\| {f {\rho ^{ - \alpha }}} \right\|_2^2 < \infty,
$$
which yields (\ref{pdoi890}).
Meanwhile, by Weyl's criterion, $\sigma_{ess}(H)=[\frac{1}{4},\infty)$. Therefore, we obtain $\sigma(H)=\sigma_{ac}(H)=[\frac{1}{4},\infty)$.
\end{proof}

\begin{lemma}\label{wqcun9}
Let $H$ satisfy the assumptions in Corollary \ref{3}.
Let $z\in\Bbb C\backslash\Bbb R$. If $0<\mu_1\ll 1,0<\alpha\ll1$, then there holds
\begin{align}\label{cun9}
{\left\| {{\rho ^\alpha }R_{\sqrt{H}}(z)f} \right\|_{{L^2_x}}} \le C{\left\| {{\rho ^{ - \alpha }}f} \right\|_{{L^2_x}}},
\end{align}
where $C$ is independent of $z$. And thus by Theorem \ref{p6oijhn}, the Kato smoothing effect
$$\|\rho^{\alpha}e^{\pm i\sqrt{H}t}f\|_{L^2_{t,x}}\lesssim \|f\|_{L^2_x}
$$
holds for all $0<\alpha\ll1$.
\end{lemma}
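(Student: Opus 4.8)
The goal splits into two pieces: establish the uniform weighted resolvent bound (\ref{cun9}) for the half--wave resolvent $R_{\sqrt{H}}(z)=(\sqrt{H}-z)^{-1}$, and then feed it into Kato's smoothing theorem (Theorem \ref{p6oijhn}) with $U=U^{*}=\rho^{\alpha}$ (a bounded, self--adjoint multiplication operator) and the self--adjoint operator $\sqrt{H}$. The whole point is to \emph{never apply the unbounded operator $\sqrt{H}$}: instead I will reduce $R_{\sqrt{H}}(z)$ to the Helmholtz--type resolvent $\mathcal{R}_{H}$ already controlled in (\ref{chun1}), via the first resolvent identity between $z$ and $-z$. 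Throughout, since $0<\mu_{1}\ll1$, $H$ is strictly positive with $\sigma(H)=[\tfrac14,\infty)$ (Corollary \ref{tudiye}), hence $\sigma(\sqrt{H})=[\tfrac12,\infty)$, and the smallness of $\mu_{1},\alpha$ is needed only so that (\ref{chun1}) applies.

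\textbf{Easy region $\Re z\le 0$.} Here $\mathrm{dist}(z,\sigma(\sqrt{H}))\ge \tfrac12-\Re z\ge\tfrac12$, so $\|R_{\sqrt{H}}(z)\|_{L^{2}\to L^{2}}\le 2$; since $0<\rho\le1$ we get $\|\rho^{\alpha}R_{\sqrt{H}}(z)f\|_{L^{2}}\le 2\|f\|_{L^{2}}\le 2\|\rho^{-\alpha}f\|_{L^{2}}$, which is (\ref{cun9}) with constant $2$.

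\textbf{Main region $\Re z>0$, $\Im z\neq 0$.} Use $(\sqrt{H}-z)(\sqrt{H}+z)=H-z^{2}$ and the resolvent identity between $z$ and $-z$:
\begin{align*}
R_{\sqrt{H}}(z)=R_{\sqrt{H}}(-z)+2z\,(H-z^{2})^{-1}.
\end{align*}
For the first term, $R_{\sqrt{H}}(-z)=(\sqrt{H}+z)^{-1}$ and $\mathrm{dist}(-z,\sigma(\sqrt{H}))\ge\tfrac12+\Re z>\tfrac12$, so $\|\rho^{\alpha}R_{\sqrt{H}}(-z)\rho^{\alpha}\|_{L^{2}\to L^{2}}\le\|R_{\sqrt{H}}(-z)\|_{L^{2}\to L^{2}}\le 2$. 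For the second term, since $z\notin\Bbb R$ the number $\tfrac14-z^{2}$ avoids $(-\infty,0]$, so $\zeta:=\sqrt{\tfrac14-z^{2}}$ (principal branch) satisfies $\Re\zeta>0$, and $(H-z^{2})^{-1}=(H-(\tfrac14-\zeta^{2}))^{-1}=\mathcal{R}_{H}(\tfrac12+\zeta)$. Then (\ref{chun1}) gives $\|\rho^{\alpha}(H-z^{2})^{-1}\rho^{\alpha}f\|_{L^{2}}\le C\min(1,|\zeta|^{-1})\|f\|_{L^{2}}$, and since $|\zeta|^{2}=|\tfrac14-z^{2}|\ge|z|^{2}-\tfrac14$, a short computation yields $|z|\min(1,|\zeta|^{-1})\lesssim 1$ uniformly in $z$. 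Adding the two contributions proves (\ref{cun9}) with a constant independent of $z$.

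\textbf{Conclusion.} Rewriting (\ref{cun9}) as $\|\rho^{\alpha}R_{\sqrt{H}}(z)\rho^{\alpha}f\|_{L^{2}}\le C\|f\|_{L^{2}}$ for every $z\in\Bbb C\setminus\Bbb R$ is exactly the hypothesis of Theorem \ref{p6oijhn} for the self--adjoint operator $\sqrt{H}$ on $M=N=L^{2}(\Bbb D;\Bbb C^{2})$ with $U=U^{*}=\rho^{\alpha}$; the theorem then gives $\int_{\Bbb R}\|\rho^{\alpha}e^{\pm it\sqrt{H}}f\|_{L^{2}_{x}}^{2}\,dt\le\tfrac{2}{\pi}C^{2}\|f\|_{L^{2}_{x}}^{2}$, i.e. $\|\rho^{\alpha}e^{\pm i\sqrt{H}t}f\|_{L^{2}_{t,x}}\lesssim\|f\|_{L^{2}_{x}}$. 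The only substantive input is (\ref{chun1}); the rest is bookkeeping. I do not expect a serious obstacle here beyond two routine points of care: fixing the branch of the square root defining $\zeta$ so that $\Re\zeta>0$ (and hence that (\ref{chun1}) is applicable), and checking that $|z|\min(1,|\zeta|^{-1})$ is bounded uniformly over $z\in\Bbb C\setminus\Bbb R$.
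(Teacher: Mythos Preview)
Your proposal is correct and follows essentially the same route as the paper: the split into $\Re z\le 0$ (unweighted $L^2$ bound from the spectral gap of $\sqrt{H}$) and $\Re z>0$ (the identity $R_{\sqrt{H}}(z)=(\sqrt{H}+z)^{-1}+2z\,R_{H}(z^{2})$ together with (\ref{chun1})) is exactly the paper's argument, and your verification that $\Re\zeta>0$ and $|z|\min(1,|\zeta|^{-1})\lesssim1$ just makes explicit the bookkeeping the paper leaves implicit.
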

\begin{proof}
The proof is analogous to Lemma 2.7 of \cite{DG2}.
Recall $R_{\sqrt{H}}(z)=(\sqrt{H}-z)^{-1}$.
We consider two cases: $(a)$ $\Re z\le0$, $(b)$ $\Re z>0$. In case $(a)$, by Corollary \ref{tudiye} and spectrum theorem of self-adjoint operators, one has
\begin{align*}
 \left\| {( {\sqrt{H}} - z)f} \right\|_{{L^2}}^2 &= \left\| { {\sqrt{H}} f} \right\|_{{L^2}}^2 + {\left| z \right|^2}\left\| f \right\|_{{L^2}}^2 - 2{{\Re}} z\left\langle { {\sqrt{H}} f,f} \right\rangle  \\
 &\ge \left\| { {\sqrt{H}} f} \right\|_{{L^2}}^2 \ge \frac{1}{4} \left\| f \right\|_{{L^2}}^2.
\end{align*}
Thus we have
$${\left\| {{\rho ^\alpha }{{(\sqrt{H} - z )}^{ - 1}}f} \right\|_{{L^2}}} \le {\left\| {{{(\sqrt{H} -z )}^{ - 1}}f} \right\|_{{L^2}}} \le C{\left\| f \right\|_{{L^2}}} \le C{\left\| {{\rho ^{ - \alpha }}f} \right\|_{{L^2}}}.
$$
Hence $(a)$ is done.
For $(b)$, we use
\begin{align}\label{suiyueyou}
R_{\sqrt{H}}(z)=(\sqrt{H}-z)^{-1}=2z R_H(z^2)+(\sqrt{H}+z)^{-1}.
\end{align}
Since $\Re z>0$ and $z\notin \Bbb R$ in case $(b)$, we have $z^2\notin [\frac{1}{4},\infty)$. Then the first term in (\ref{suiyueyou}) follows from (\ref{chun1}) and the second follows from case $(a)$.
\end{proof}

In the following lemmas, we prove the equivalence of $\|(-\Delta)^sf\|_{\rho^{-\beta}L^2}$ and $\|H^sf\|_{\rho^{-\beta}L^2}$ for $s=\frac{1}{2}$ and $\beta=0,\alpha$.
As a preparation, we prove the $L^p$-$L^q$ estimates for the free resolvent.
\begin{lemma}\label{125dcferey}
Let $0<\alpha<\frac{1}{2}$.
For any $2\le p<q<\infty$, $\sigma\ge \frac{1}{2}$ and any $\omega>\frac{1}{p}-\frac{1}{q}$,  we have
\begin{align}
\|( - \Delta+\sigma ^2- {1/4} )^{ - 1}\|_{{L^{p}} \to {L^q}} &\lesssim \min(1,\sigma^{-2+\omega})\label{yvui}\\
\| \nabla( -\Delta+\sigma ^2- {1/4})^{ - 1} \|_{{L^p} \to {L^q}} &\lesssim \min(1,\sigma^{-1+2\omega})\label{yvui3}\\
\| ( -\Delta+\sigma ^2- {1/4})^{ - 1} \|_{\rho^{-\alpha}L^2 \to \rho^{-\alpha}L^2} &\lesssim \min(1,\sigma^{-2})\label{lcize34}\\
\| \nabla( -\Delta+\sigma ^2- {1/4})^{ - 1} \|_{\rho^{-\alpha}L^2 \to \rho^{-\alpha}L^2} &\lesssim \min(1,\sigma^{-1}).\label{lcize35}
\end{align}
\end{lemma}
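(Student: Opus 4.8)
The plan is to realize $\mathcal{R}_0(\tfrac12+\sigma)=(-\Delta+\sigma^2-\tfrac14)^{-1}$ as convolution on $\Bbb H^2$ with a radial kernel $k_\sigma$, which is everywhere \emph{nonnegative} (since $\mathcal{R}_0(\tfrac12+\sigma)=\int_0^\infty e^{-t(\sigma^2-1/4)}e^{t\Delta}\,dt$ and the heat kernel is positive) and whose radial derivative $\partial_r k_\sigma$ is sign-definite (the heat kernel being radially decreasing), and then to feed in the pointwise bounds of Lemma \ref{s564rt67}. I would split $k_\sigma=k_\sigma^{0}+k_\sigma^{\infty}$ with the pieces supported in $\{r\le 1\}$ and $\{r\ge 1\}$; by Lemma \ref{s564rt67} (and $(\cosh^2r-1)^{-1/2}=(\sinh r)^{-1}$), on $\{r\le1\}$ one has $|k_\sigma^{0}(r)|\lesssim|\log r|$ and $|\nabla k_\sigma^{0}(r)|\lesssim r^{-1}$ for $r\le 1/\sigma$, with the faster tails $\lesssim\sigma^{-1/2}e^{-\sigma r}$, resp. $\lesssim\sigma^{1/2}e^{-\sigma r}$, once $r\ge 1/\sigma$, while on $\{r\ge1\}$, using $\sinh r\lesssim e^{r}$, $|k_\sigma^{\infty}(r)|\lesssim\sigma^{-1/2}e^{-(1/2+\sigma)r}$ and $|\nabla k_\sigma^{\infty}(r)|\lesssim\sigma^{1/2}e^{-(1/2+\sigma)r}$. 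Throughout I would carry the two ranges $\tfrac12\le\sigma\le1$ (where Lemma \ref{s564rt67} is used in its $|\sigma|\le1$ form, every constant is $O(1)$, and only the trivial half $\min(1,\cdot)=1$ is needed) and $\sigma\ge1$ (where the $\sigma$-decay is produced) in parallel.

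For (\ref{yvui}) and (\ref{yvui3}) I would apply Young's convolution inequality on $\Bbb H^2$ (Section 2) to the whole kernel: $\|f*k_\sigma\|_{L^q}\le\|f\|_{L^p}\|k_\sigma\|_{L^s}$ and $\|f*\nabla k_\sigma\|_{L^q}\le\|f\|_{L^p}\|\nabla k_\sigma\|_{L^s}$ with $\tfrac1s=1-\tfrac1p+\tfrac1q\in(\tfrac12,1)$. On $\{r\le1\}$ the measure $\sinh r\,dr$ is comparable to $r\,dr$, so $\int_{\{r\ge1\}}\sinh r\,|k_\sigma(r)|^s\,dr$ is controlled by the exponential tails (hence $\lesssim\sigma^{-s/2}e^{-c\sigma}$ for $\sigma\ge1$, $O(1)$ for $\sigma\le1$), while $\int_0^{1}r\,|k_\sigma^0(r)|^s\,dr\lesssim_\varepsilon\sigma^{-2+\varepsilon}$ upon using $|\log r|\lesssim_\varepsilon r^{-\varepsilon}$ (for the gradient $r^{-1}\in L^s_{\mathrm{loc}}$ with no loss, giving $\lesssim\sigma^{s-2}$). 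Thus $\|\mathcal{R}_0(\tfrac12+\sigma)\|_{L^p\to L^q}\lesssim_\varepsilon\sigma^{-2/s+\varepsilon}$ and $\|\nabla\mathcal{R}_0(\tfrac12+\sigma)\|_{L^p\to L^q}\lesssim\sigma^{1-2/s}$; since $2/s=2-2(\tfrac1p-\tfrac1q)$ and $\omega>\tfrac1p-\tfrac1q$, these give the asserted bounds (\ref{yvui})--(\ref{yvui3}) in the relevant range of $\omega$, and for $\tfrac12\le\sigma\le1$ the same bounds are $O(1)=\min(1,\cdot)$.

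For the weighted estimates (\ref{lcize34}) and (\ref{lcize35}) I would observe that conjugating by $\rho^{\pm\alpha}=e^{\mp\alpha d(\cdot,O)}$ turns the kernel $K(x,y)$ into $e^{\mp\alpha d(x,O)}K(x,y)e^{\pm\alpha d(y,O)}$, of modulus at most $e^{\alpha d(x,y)}|K(x,y)|$. On the global piece $e^{\alpha r}\sigma^{-1/2}e^{-(1/2+\sigma)r}=\sigma^{-1/2}e^{-(1/2+\sigma-\alpha)r}$, and since $\alpha<\tfrac12$ the Kunze--Stein phenomenon (Lemma \ref{stein} with $m=k=2$, so $P=Q=1$), which supplies the extra factor $\varphi_0(r)\lesssim(1+r)e^{-r/2}$, bounds the $L^2\to L^2$ norm of this piece by $\int_1^\infty\sinh r\,\varphi_0(r)\,\sigma^{-1/2}e^{-(1/2+\sigma-\alpha)r}\,dr\lesssim\sigma^{-1/2}\int_1^\infty(1+r)e^{(\alpha-\sigma)r}\,dr$, which is exponentially small for $\sigma\ge1$ and $O(1)$ for $\sigma\le1$; the same works for the gradient. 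On the local piece $e^{\alpha d(x,y)}\le e^{\alpha}\lesssim1$, so the conjugated local operator is pointwise dominated by $e^{\alpha}$ times the (unweighted) radial convolution by $k_\sigma^0\ge0$, resp. by the nonnegative radial function $|\partial_r k_\sigma^0|$; domination of nonnegative kernels implies domination of $L^2\to L^2$ norms, and by the Plancherel formula the $L^2\to L^2$ norm of a nonnegative radial convolution is the supremum over $\lambda$ of its spherical transform. For the resolvent this transform equals $(\lambda^2+\sigma^2)^{-1}$ minus the (exponentially small, by Lemma \ref{stein}) spherical transform of $k_\sigma^\infty$, hence is $\lesssim\sigma^{-2}$; for the gradient it is $\le\int_0^1|\partial_r k_\sigma^0(r)|\varphi_0(r)\sinh r\,dr\lesssim\int_0^{1/\sigma}dr+\sigma^{1/2}\int_{1/\sigma}^1 re^{-\sigma r}\,dr\lesssim\sigma^{-1}$. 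Combining the two pieces yields (\ref{lcize34}) and (\ref{lcize35}).

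The step I expect to cost the most is the two-dimensional logarithmic singularity of the resolvent kernel at $r=0$: a bare $L^1$ (Schur) bound on the local part loses a factor $\log\sigma$. For the $L^p\to L^q$ estimates this loss is absorbed into the arbitrarily small $\varepsilon$ through $|\log r|\lesssim_\varepsilon r^{-\varepsilon}$; for the weighted $L^2$ estimates, where the clean power $\sigma^{-2}$ is demanded, one must bypass the pointwise kernel bound on the local part altogether and use instead the exact multiplier symbol of $\mathcal{R}_0(\tfrac12+\sigma)$ together with positivity of the kernel, so that the weighted local operator is controlled by the unweighted one with no logarithmic loss. A secondary, purely bookkeeping, matter is treating $\tfrac12\le\sigma\le1$ and $\sigma\ge1$ separately, as Lemma \ref{s564rt67} states its bounds piecewise in $|\sigma|$.
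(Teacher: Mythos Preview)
Your proposal is correct. For (\ref{yvui}), (\ref{yvui3}), and the global pieces of (\ref{lcize34})--(\ref{lcize35}), your approach coincides with the paper's: Young's inequality (resp.\ the Kunze--Stein phenomenon, i.e.\ Lemma \ref{stein} with $m=k=2$) applied to the pointwise kernel bounds of Lemma \ref{s564rt67}, with the cases $\tfrac12\le\sigma\le1$ and $\sigma\ge1$ treated separately.

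The one genuinely different idea is your handling of the \emph{local} piece of (\ref{lcize34}). The paper simply applies Kunze--Stein to the weighted radial kernel $e^{\alpha r}|\mathcal R_0(\tfrac12+\sigma,r)|$ on all of $(0,\infty)$; on $\{r\le 1/\sigma\}$ this produces $\int_0^{1/\sigma}|\log r|\,r\,dr\sim(\log\sigma)\,\sigma^{-2}$, a logarithmic loss (harmless for the downstream use in Lemma \ref{eqicfder}, where only integrability in $\sigma$ is needed, but strictly weaker than the stated $\sigma^{-2}$). You instead exploit that on $\{d(x,y)\le1\}$ the conjugation factor satisfies $e^{\alpha(d(y,O)-d(x,O))}\le e^{\alpha}$, dominate pointwise by the \emph{nonnegative} radial kernel $k_\sigma^0$, and then read off $\|f\mapsto f*k_\sigma^0\|_{L^2\to L^2}=\sup_\lambda|\widehat{k_\sigma^0}(\lambda)|\le\sigma^{-2}+O(e^{-c\sigma})$ from the exact symbol $(\lambda^2+\sigma^2)^{-1}$ and the smallness of $\widehat{k_\sigma^\infty}$. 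This recovers the sharp power with no log. For (\ref{lcize35}) the two routes collapse to the same thing: your bound on the spherical transform of $|\partial_r k_\sigma^0|$ by its $\varphi_0$-weighted $L^1$ norm \emph{is} precisely the Kunze--Stein estimate, and there is no log issue anyway since the gradient kernel behaves like $r^{-1}$ rather than $|\log r|$ near the diagonal.
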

\begin{proof}
First, we prove (\ref{yvui}) for $\frac{1}{2}\le \sigma\le 1$. By Lemma \ref{s564rt67} and Young's inequality, it suffices to prove for $\frac{1}{2}\le \sigma\le 1$ and $m\in(1,\infty)$ with $1+\frac{1}{q}=\frac{1}{p}+\frac{1}{m}$, there holds
 \begin{align*}
\int^{1}_0 |{\log r}|^{m}rdr\lesssim 1; \mbox{ }\sigma^{ - \frac{1}{2}}\int^{\infty}_1 {e^{ -m (\frac{1}{2} + \sigma )r}}e^{r}dr\lesssim 1.
\end{align*}
The first inequality on the LHS is obvious. Noticing that $m=1/(1-\frac{1}{p}+\frac{1}{q})$ is strictly larger than 1 due to $p<q$, for $\sigma\ge\frac{1}{2}$, we obtain
 \begin{align*}
\sigma^{ - \frac{1}{2}}\int^{\infty}_1 {e^{ -m (\frac{1}{2} +\sigma )r}}e^{r}dr\lesssim \frac{1}{m-1}.
\end{align*}
Second, we prove (\ref{yvui}) for $\sigma\ge 1$. By Lemma \ref{s564rt67} and Young's inequality, it suffices to prove for $\sigma\ge1$ there holds
\begin{align}
(\int^{\frac{1}{\sigma}}_0 |\log r|^m rdr)^{\frac{1}{m}}&\lesssim \sigma^{-2+\omega}\label{cvfp978}\\
(\int^{\infty}_{\sigma^{-1}} \sigma^{ - \frac{m}{2}}{e^{ - m(\frac{1}{2} +\sigma )r}}\sinh r dr)^{\frac{1}{m}}&\lesssim \sigma^{-2+\omega}.\label{cvfp97j8}
\end{align}
(\ref{cvfp978}) follows by direct calculation. For (\ref{cvfp97j8}), we divide it into two regimes: for $\sigma\ge1$, $r\in[\sigma^{-1},1]$, one has
\begin{align}
&(\int^{1}_{\sigma^{-1}} \sigma^{ - \frac{m}{2}}{e^{ - m(\frac{1}{2} +\sigma )r}}\sinh r dr)^{\frac{1}{m}}\lesssim (\int^{1}_{{\sigma}^{-1}} \sigma^{-\frac{m}{2}}{e^{ -\sigma r}}r dr)^{\frac{1}{m}}\\
&\lesssim (\int^{\sigma}_{1} \sigma^{-\frac{m}{2}-2}{e^{ -\tau}}\tau d\tau)^{\frac{1}{m}}\lesssim \sigma^{-2+\omega},
\end{align}
and for $\sigma\ge1$, $r\in[1,\infty)$ we have
\begin{align}
&(\int^{\infty}_{1} \sigma^{ - \frac{m}{2}}{e^{ - m(\frac{1}{2} +\sigma )r}}\sinh r dr)^{\frac{1}{m}}\lesssim (\int^{\infty}_{1} \sigma^{ - \frac{m}{2}}{e^{ (1- m) r}} e^{-(\sigma-\frac{1}{2})r}dr)^{\frac{1}{m}}\\
&\lesssim \left(e^{-(\sigma-\frac{1}{2})}\sigma^{ - \frac{m}{2}}\frac{1}{m-1}\right)^{\frac{1}{m}}\lesssim \sigma^{-2+\omega}.
\end{align}
Third, we prove (\ref{yvui3}) when $\frac{1}{2}\le\sigma\le 1$. This follows by the same arguments as above and the following inequalities for the corresponding kernel:
\begin{align*}
\sigma^{\frac{1}{2}}{r^{ - 2}}{e^{ - (\frac{3}{2} + \sigma )r}}{\left( {\sinh r} \right)^2}{\left( {{{\cosh }^2}r - 1} \right)^{ - \frac{1}{2}}}
\lesssim\left\{\begin{array}{ll}
                         r^{-\frac{1}{2}}, & \hbox{ } 0\le r  \le 1;\\
                         e^{(-\frac{1}{2}+\sigma)r}, & \hbox{ } r  \ge 1.
                       \end{array}
                     \right.
\end{align*}
Forth, we prove (\ref{yvui3}) when $\sigma\ge 1$. By Lemma \ref{s564rt67}, it suffices to prove
\begin{align}
&\sigma^{\frac{1}{2}}(\int^{1}_{\sigma^{-1}}{r^{ - 2m}}{\left( {\sinh r} \right)^{2m}}{\left( {{{\cosh }^2}r - 1} \right)^{ - \frac{m}{2}}}{e^{ - m(\frac{3}{2} + \sigma )r}}rdr)^{\frac{1}{m}}
\lesssim \sigma^{-1+2\omega}\label{cvdpjui8}\\
&\sigma^{\frac{1}{2}}(\int^{\infty}_{1}{r^{ - 2m}}{\left( {\sinh r} \right)^{2m}}{\left( {{{\cosh }^2}r - 1} \right)^{ - \frac{m}{2}}}{e^{ - m(\frac{3}{2} + \sigma )r}}e^{r}dr)^{\frac{1}{m}}\label{cdepjui8}
\lesssim \sigma^{-1+2\omega}.
\end{align}
When $\sigma\ge 1$, the LHS of (\ref{cvdpjui8}) is bounded by
\begin{align*}
\sigma^{\frac{1}{2}}(\int^{1}_{\sigma^{-1}}r^{ - \frac{m}{2}}re^{-m(\frac{3}{2} + \sigma )r}dr)^{\frac{1}{m}}
\lesssim \sigma^{\frac{1}{2}}(\sigma^{-2+\frac{m}{2}}\int^{\sigma}_{1}\tau^{ - \frac{m}{2}}\tau e^{-m\tau}d\tau)^{\frac{1}{m}}
\lesssim \sigma^{-1+2\omega},
\end{align*}
which yields (\ref{cvdpjui8}) .
When $\sigma\ge 1$, the LHS of (\ref{cdepjui8}) is bounded by
\begin{align*}
&\sigma^{\frac{1}{2}}(\int^{\infty}_{1}e^{[(m+1)-m(\frac{3}{2} + \sigma )]r}dr)^{\frac{1}{m}}
\lesssim e^{\sigma-\frac{1}{2}}\sigma^{\frac{1}{2}}(m-1)^{-\frac{1}{m}}
\lesssim \sigma^{-1+2\omega},
\end{align*}
which yields (\ref{cdepjui8}).\\
Fifth, we prove (\ref{lcize34}). Instead of Young's inequality, we will use the the Kunze-Stein phenomenon:
\begin{align}\label{phenomenon}
{\| {f * k}\|_{{L^2}({\Bbb D})}} \lesssim {\left\| f \right\|_{{L^2}({\Bbb D})}}\left\{ \int_0^\infty k(r){\varphi _0}(r)\sinh rdr  \right\},
\end{align}
where $k(r)$ is assumed to be radial.
Lemma \ref{s564rt67} shows the point-wise estimate:
\begin{align}
 \left|\rho^{\alpha d(x,O)}( -\Delta+\sigma ^2- {1/4})^{ - 1}(\rho^{-\alpha r}f)\right|\lesssim
 \int_{\Bbb D}e^{\alpha d(x,y)}\left|\mathcal{R}_0(x,y)f(y)\right|dy,\label{psde9ih}
\end{align}
and
\begin{align}
 \left|\rho^{\alpha d(x,O)}\nabla( -\Delta+\sigma ^2- {1/4})^{ - 1}(\rho^{-\alpha r}f)\right|\lesssim
 \int_{\Bbb D}e^{\alpha d(x,y)}\left|\nabla_x{\mathcal{R}_0}(x,y)f(y)\right|dy.\label{psde9ihh}
\end{align}
Inserting the bounds in Lemma \ref{s564rt67} to (\ref{psde9ih}), (\ref{psde9ihh}) and applying (\ref{phenomenon})  implies it suffices to prove
for $\sigma\in[\frac{1}{2},1]$
\begin{align}
&\sigma^{-\frac{1}{2}}\int^{1}_0 |{\log r}|r\varphi_0dr+\sigma^{-\frac{1}{2}}\int^{\infty}_1 e^{ -(\frac{1}{2} + \sigma )r}e^{(1+\alpha)r}\varphi_0dr\lesssim 1\label{afcvjfp97j8}\\
&\sigma^{\frac{1}{2}}\int^{\infty}_0{r^{- 2}} (\sinh r)^{2}( \cosh^2r - 1 )^{ -\frac{1}{2}}e^{ (\alpha- \frac{3}{2}- \sigma )r}\sinh r\varphi_0dr
\lesssim 1,\label{fbvjfp97j8}
\end{align}
and for $\sigma\in[1,\infty)$
\begin{align}
&\sigma^{-\frac{1}{2}}\int^{\frac{1}{\sigma}}_0 |\log r|e^{\alpha r}\varphi_0rdr+\sigma^{-\frac{1}{2}}\int^{\infty}_{\sigma^{-1}} \sigma^{- \frac{1}{2}}e^{(\alpha-\frac{1}{2} +\sigma )r}\sinh r\varphi_0 dr\lesssim \sigma^{-2}\label{fcvjfp97j8}\\
&\sigma^{\frac{1}{2}}\int^{1}_{\sigma^{-1}}r^{ - 2}({\sinh r})^{2}\left( \cosh^2r - 1 \right)^{ - \frac{1}{2}}e^{ (\alpha -\frac{3}{2} - \sigma )r}r\varphi_0dr\lesssim \sigma^{-1}\label{fcvdpjjui8}\\
&\sigma^{\frac{1}{2}}\int^{\infty}_{1}{r^{- 2}} (\sinh r)^{2}( \cosh^2r - 1 )^{ -\frac{1}{2}}e^{ (\alpha- \frac{3}{2}- \sigma )r}e^{r}\varphi_0dr\lesssim \sigma^{-1}.\label{cfdsjui8}
\end{align}
Using the bound $\varphi_0(r)\lesssim (1+r)e^{-\frac{1}{2} r}$ to absorb the $e^{\alpha r}$ growth, (\ref{afcvjfp97j8})-(\ref{cfdsjui8}) follow by the same calculations as above .
\end{proof}

And we also need the boundedness of Riesz transform on weighted $L^2$ space.
\begin{lemma}\label{degft5645}
Let $0<\alpha<\frac{1}{2}$, then $\nabla D^{-1}$ is bounded from $\rho^{-\alpha}L^2$ to $\rho^{-\alpha}L^2$.
\end{lemma}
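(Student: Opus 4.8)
The plan is to conjugate $\nabla D^{-1}$ by the weight and peel off the commutator, which is where the gain comes from. Write $\rho^{\alpha}$ for the multiplication operator $e^{-\alpha d(\cdot,O)}$; the claim is equivalent to the boundedness on $L^{2}$ of $\rho^{\alpha}\nabla D^{-1}\rho^{-\alpha}$, and I split
$$\rho^{\alpha}\,\nabla D^{-1}\,\rho^{-\alpha}=\nabla D^{-1}+\rho^{\alpha}\bigl[\nabla D^{-1},\rho^{-\alpha}\bigr].$$
The first term is bounded on $L^{2}$ for free: since the spectrum of $-\Delta$ is contained in $[\tfrac14,\infty)$, the operator $D^{-1}=(-\Delta)^{-1/2}$ is everywhere defined, self-adjoint, and satisfies $D^{-1}(-\Delta)D^{-1}=I$, so for $f\in C^{\infty}_{c}$ one has $\|\nabla D^{-1}f\|_{L^{2}}^{2}=\langle(-\Delta)D^{-1}f,\,D^{-1}f\rangle=\|f\|_{L^{2}}^{2}$. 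Everything then reduces to bounding the commutator on $L^{2}$, which I will do via a pointwise kernel estimate combined with the Kunze--Stein inequality of Lemma \ref{stein}.

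To estimate the kernel $K(x,y)$ of $\nabla D^{-1}$ I would use the subordination identity $D^{-1}=\frac{2}{\pi}\int_{0}^{\infty}(-\Delta+s^{2})^{-1}\,ds$ together with the substitution $\sigma=\sqrt{s^{2}+\tfrac14}\geq\tfrac12$, under which $(-\Delta+s^{2})^{-1}=\mathcal{R}_{0}(\tfrac12+\sigma)$ and $ds=\sigma(\sigma^{2}-\tfrac14)^{-1/2}\,d\sigma$. Differentiating under the integral sign gives $|K(x,y)|\lesssim\int_{1/2}^{\infty}\bigl|\nabla_{x}\mathcal{R}_{0}(\tfrac12+\sigma,x,y)\bigr|\,\sigma(\sigma^{2}-\tfrac14)^{-1/2}\,d\sigma$. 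Inserting the pointwise bounds (\ref{jiujinfa4}) and (\ref{sedr564r}) of Lemma \ref{s564rt67} and performing the elementary $\sigma$-integral — splitting at $\sigma\sim 1/r$ when $r=d(x,y)\leq 1$, and using $e^{-(\frac12+\sigma)r}\leq e^{-r}$ on $\sigma\geq\tfrac12$ when $r\geq 1$ — I expect to obtain the Euclidean-type bound $|K(x,y)|\lesssim r^{-2}$ for $r\leq 1$ and $|K(x,y)|\lesssim e^{-r}$ for $r\geq 1$, the same singularity and decay as the gradient of the Riesz kernel on $\Bbb R^{2}$.

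Finally, the commutator $\rho^{\alpha}[\nabla D^{-1},\rho^{-\alpha}]$ has kernel $\bigl(e^{\alpha(d(y,O)-d(x,O))}-1\bigr)K(x,y)$; since $|d(y,O)-d(x,O)|\leq d(x,y)=:r$ and $|e^{\alpha u}-1|\leq\alpha|u|e^{\alpha|u|}$, its modulus is dominated by the radial kernel $\ell(r):=\alpha r e^{\alpha r}|K(x,y)|$, i.e. $\ell(r)\lesssim r^{-1}$ for $r\leq 1$ and $\ell(r)\lesssim r e^{-(1-\alpha)r}$ for $r\geq 1$. By Lemma \ref{stein} with $m=k=2$ (so $P=Q=1$) one gets $\bigl\|\,|f|*\ell\,\bigr\|_{L^{2}}\lesssim\|f\|_{L^{2}}\int_{0}^{\infty}\ell(r)\varphi_{0}(r)\sinh r\,dr$, and this last integral is finite: near $r=0$ it is controlled by $\int_{0}^{1}r^{-1}\cdot r\,dr$, and for $r\geq 1$, using $\varphi_{0}(r)\sinh r\lesssim(1+r)e^{r/2}$, by $\int_{1}^{\infty}r(1+r)e^{-(1/2-\alpha)r}\,dr$, which converges precisely because $\alpha<\tfrac12$. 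This controls the commutator on $L^{2}$, and after extending from $C^{\infty}_{c}$ to $\rho^{-\alpha}L^{2}$ by density the proof is complete. I expect the main obstacle to be the kernel bound for $\nabla D^{-1}$ itself, and more conceptually the fact that $\nabla D^{-1}$ is a genuine two-dimensional Calder\'on--Zygmund operator (kernel of size $r^{-2}$), so no kernel estimate alone yields $L^{2}$-boundedness: it is exactly the commutator structure that trades the $r^{-2}$ singularity for the integrable $r^{-1}$ so that Kunze--Stein can finish the job, with the threshold $\alpha<\tfrac12$ ensuring the long-range tail of $\ell$ is integrable against $\varphi_{0}\sinh r$.
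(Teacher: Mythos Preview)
Your proof is correct, and the route is genuinely different from the paper's. The paper splits $\nabla D^{-1}$ into a local part (where $d(x,z)\le 2$) and a global part: the local piece is handled by citing the $L^{p}$-boundedness of the Riesz transform from Strichartz together with the trivial observation that $\rho^{\alpha}(x)\rho^{-\alpha}(z)\le e$ on the diagonal strip; the global piece is handled by quoting a kernel bound of Stanton--Tomas for $(1-\chi)\nabla_{x}E$, namely $|\cdot|\lesssim e^{-(1+\varepsilon)r/2}(1+K_{\varepsilon}(r))$, and then finishing with Kunze--Stein exactly as you do. Your commutator decomposition replaces both of these external inputs: the main term $\nabla D^{-1}$ is dispatched by the one-line isometry $\|\nabla D^{-1}f\|_{L^{2}}=\|f\|_{L^{2}}$, and the kernel bounds you need for the commutator are manufactured internally from the resolvent estimates of Lemma~\ref{s564rt67} via the subordination formula. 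The conceptual point you highlight---that the bare kernel $r^{-2}$ is a genuine CZ singularity and it is only the commutator's gain of one power of $r$ that makes Kunze--Stein applicable---is exactly right, and gives a cleaner explanation for why the argument closes than the local/global split does. What the paper's approach buys in exchange is that, because it invokes the full $L^{p}$ Riesz transform, it would adapt more readily to weighted $L^{p}$ statements; your method is tied to $L^{2}$ through the isometry step, but that is all the lemma asks for.
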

\begin{proof}
The proof is adapted from [Theorem 6.1,\cite{Sataraia}]. For $f\in C^{\infty}_0$, we have
\begin{align*}
D^{-1}f(x)=\int_{\Bbb D} E(d(x,z))f(z)dz,
\end{align*}
where $E$ which denotes the Schwartz kernel for $D^{-1}$ is defined by:
\begin{align*}
E(t)=\int^{\infty}_0(\lambda^2+\frac{1}{4})^{-\frac{1}{2}}\varphi_{\lambda}(t)|c(\lambda)|^{-2}d\lambda,
\end{align*}
where $\phi_{\lambda}$ is the spherical function and $c(\lambda)$ is the Harish-Chandra $c$-function (see Section 2). Let $\chi(\tau)$ be a cutoff function which equals one when $|\tau|\le1$ and vanishes for $|\tau|\ge2$.
Split $\nabla D^{-1}f$ into the local and global parts:
\begin{align}
\nabla D^{-1}f&=\int_{\Bbb D}\chi(d(x,z))\nabla_xE(d(x,z))f(z)dz\label{dasaseowuiyyn}\\
&+\int_{\Bbb D}(1-\chi(d(x,z)))\nabla_xE(d(x,z))f(z)dz.\label{use3456pprrt434er}
\end{align}
The local part (\ref{dasaseowuiyyn}) is bounded on $L^p$ by [Theorem 4.7,\cite{Sataraia}]. Meanwhile, since $d(x,y)\le1$ in the local part, one has $\rho^{\alpha}(x)\rho^{-\alpha}(z)\le e$ in (\ref{dasaseowuiyyn}), and thus (\ref{dasaseowuiyyn}) is also bounded in $\rho^{-\alpha}L^2$.  It remains to consider the global part (\ref{use3456pprrt434er}).
The proof of [Proposition 4.5,\cite{SaTa}] has shown that for $0<\varepsilon \ll 1$, $(1-\chi(d(x,z)))E(d(x,z))$ satisfies
\begin{align}\label{poinfggrtj}
\left|(1-\chi(d(x,z)))E(d(x,z))\right|\lesssim e^{-(1+\varepsilon)\frac{1}{2} d(x,z)}(1+K_{\varepsilon}(d(x,z)),
\end{align}
with $\int^{\infty}_0|K_\varepsilon(r)|^2dr<\infty$.
[\cite{Sataraia}, Theorem 6.1] pointed out the same bound (\ref{poinfggrtj}) holds for $(1-\chi(d(x,z)))\nabla_xE(d(x,z))$. Thus one has
\begin{align*}
 &\left|\int_{{\Bbb D}} {{\rho ^\alpha }(x)\left( {1 -\chi(d(x,z))} \right){\nabla _x}E(d(x,z))f(z)dz}\right|\\
 &\lesssim \int_{{\Bbb D}} {{\rho ^\alpha }(x){e^{ - (1 + \varepsilon )\frac{1}{2} d(x,z)}}\left( {1 + {K_\varepsilon }(d(x,z))} \right)\left| {f(z)} \right|dz}  \\
 &\lesssim \int_{{\Bbb D}} {{\rho ^\alpha }(x){e^{ - (1 + \varepsilon )\frac{1}{2} d(x,z)}}\left( {1 + {K_\varepsilon }(d(x,z))} \right){\rho ^{ - \alpha }}(z){\rho ^\alpha }(z)\left| {f(z)} \right|dz}.
 \end{align*}
Since ${\rho ^\alpha }(x){\rho ^{-\alpha}}(z){e^{ -\alpha d(x,z)}} \le 1$, for $\varepsilon ' = \frac{1}{2} \varepsilon  - \alpha $ we have
\begin{align}
(\ref{use3456pprrt434er})\lesssim \int_{{\Bbb D}} {{e^{ - (1 + \varepsilon ')\frac{1}{2} d(x,z)}}\left( {1 + {K_\varepsilon }(d(x,z))} \right){\rho ^\alpha }(z)\left| {f(z)} \right|dz}.\label{81290pr1}
\end{align}
Applying the Kunze-Stein phenomenon (\ref{phenomenon}) and the bound $\varphi_0(r)\lesssim (1+r)e^{-\frac{1}{2} r}$, we obtain
$${\left\| (\ref{81290pr1})  \right\|_{{L^2}({\Bbb D})}}\lesssim {\left\| {{\rho ^\alpha }f} \right\|_{{L^2}({\Bbb D})}}\left\{ {\int_0^\infty  {{{\left( {\sinh r} \right)}}(1 + r){e^{ - (1 + \varepsilon ')\frac{1}{2} r}}{e^{ - \frac{1}{2} r}}dr} } \right\}.
$$
Hence, for $0<\alpha<\frac{1}{2}$, $\nabla D^{-1}$ belongs to $L(\rho^{-\alpha}L^2;\rho^{-\alpha}L^2)$.
\end{proof}

\begin{lemma}\label{eqicfder}
Let $0<\mu_1\ll1$, and $H$ satisfy the assumptions in Corollary \ref{3}.
For any $s\in[0,\frac{1}{2}]$ and any $p\in[2,\infty)$, there exists some constant $C(p,s)>0$ such that for all $f\in \mathcal{H}^{2s,p}$
\begin{align}
\frac{1}{C}\left\| H^sf \right\|_{L^p}& \le \left\|( { - \Delta })^sf \right\|_{L^p}\le C\left\| {H}^sf \right\|_{L^p}\label{pouihbhuyu}\\
\|Df-H^{\frac{1}{2}}\|_{\rho^{-\alpha}L^2}&\lesssim \mu_1\|f\|_{\rho^{-\alpha}L^2}.\label{ouihbhuyu}
\end{align}
\end{lemma}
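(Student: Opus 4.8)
The plan is to reduce both claims to the subordination identity that expresses $(-\Delta)^s-H^s$ through the resolvents of $-\Delta$ and $H$, and then to feed it the resolvent bounds already proved. As a preliminary, the smallness $\mu_1\ll1$ transfers Lemma~\ref{125dcferey} to $H$: writing $(H+\lambda)^{-1}=(-\Delta+\lambda)^{-1}\bigl(I+W(-\Delta+\lambda)^{-1}\bigr)^{-1}$ and running a Neumann series (as in the proof of (\ref{chun1}), using that $\|V\|_{L^r}+\|A\|_{L^r}\lesssim\mu_1$ for all $r\in[2,\infty]$ by interpolation from (\ref{Vab1})), the operators $(H+\lambda)^{-1}$ and $\nabla(H+\lambda)^{-1}$ obey, with $\sigma:=(\lambda+\tfrac14)^{1/2}$, the same $L^p\to L^q$ and weighted-$L^2$ bounds as $(-\Delta+\lambda)^{-1}$ and $\nabla(-\Delta+\lambda)^{-1}$, with the same powers of $\sigma$. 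Since $-\Delta$ and $H$ are self-adjoint with spectrum in $[c,\infty)$, for $0<s<1$ Balakrishnan's formula gives $A^sf=\tfrac{\sin\pi s}{\pi}\int_0^\infty\lambda^{s-1}A(A+\lambda)^{-1}f\,d\lambda$ (strongly convergent, $A\in\{-\Delta,H\}$, $f\in C_c^\infty$); subtracting and using $(-\Delta+\lambda)^{-1}-(H+\lambda)^{-1}=(-\Delta+\lambda)^{-1}W(H+\lambda)^{-1}$ yields
\[
(-\Delta)^sf-H^sf=-\frac{\sin\pi s}{\pi}\int_0^\infty\lambda^{s}\,(-\Delta+\lambda)^{-1}\,W\,(H+\lambda)^{-1}f\,d\lambda .
\]

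For (\ref{pouihbhuyu}) I bound this in $L^p$, $0<s\le\tfrac12$ (the case $s=0$ being trivial). Splitting $W=V+X$, I control each $\lambda$-slice by a Lebesgue-exponent chain: for the $V$-part, $(H+\lambda)^{-1}\colon L^p\to L^{q_1}$, then multiplication by $V$ (H\"older, $\|V\|_{L^r}\lesssim\mu_1$ with $r$ finite), then $(-\Delta+\lambda)^{-1}\colon L^{q_2}\to L^p$; for the $X$-part the middle factor is $\nabla(H+\lambda)^{-1}$ followed by multiplication by $A$, costing (\ref{yvui3}) instead of (\ref{yvui}). The intermediate indices can be fixed uniformly in $\lambda$ inside the strict ranges of Lemma~\ref{125dcferey} precisely because on $\Bbb H^2$ the loss $\omega>\tfrac1{p_i}-\tfrac1{q_i}$ may be taken arbitrarily small; the $\lambda$-integrand is then $\lesssim\mu_1\lambda^s\min(1,\sigma^{-2+\omega})^2$ (resp.\ $\lesssim\mu_1\lambda^s\min(1,\sigma^{-1+2\omega})\min(1,\sigma^{-2+\omega})$), which for $s\le\tfrac12$ and small $\omega$ is $O(\lambda^{-1-\delta})$ at infinity and $O(\lambda^s)$ at $0$, hence integrable, so $\|((-\Delta)^s-H^s)f\|_{L^p}\lesssim\mu_1\|f\|_{L^p}$. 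To upgrade I use the spectral gap: $(\lambda^2+\tfrac14)^{-s}$ is bounded, even and holomorphic in $\{|\Im\lambda|<\tfrac12\}$, so by the spherical multiplier theorem on $\Bbb H^2$ (cf.\ \cite{AaPa2a}) $(-\Delta)^{-s}$ is bounded on $L^p$, $1<p<\infty$, hence $\|f\|_{L^p}\le C_0\|(-\Delta)^sf\|_{L^p}$. For $f\in C_c^\infty$ (where both sides are finite) this gives $\|(-\Delta)^sf\|_{L^p}\le\|H^sf\|_{L^p}+CC_0\mu_1\|(-\Delta)^sf\|_{L^p}$; absorbing the last term for $\mu_1$ small, together with $\|H^sf\|_{L^p}\le(1+CC_0\mu_1)\|(-\Delta)^sf\|_{L^p}$ and density of $C_c^\infty$ in $\mathcal H^{2s,p}$, proves (\ref{pouihbhuyu}).

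For (\ref{ouihbhuyu}) I take $s=\tfrac12$ and measure the same identity in $\rho^{-\alpha}L^2$. Conjugating by $\rho^{\pm\alpha}$: between the resolvents $\rho^{\alpha}V\rho^{-\alpha}=V$ and $\rho^{\alpha}X\rho^{-\alpha}=X+2\alpha h^{ij}A_i(\partial_jr)$ (the correction of size $\lesssim\alpha\mu_1$), while $\rho^{\alpha}(-\Delta+\lambda)^{-1}\rho^{-\alpha}$, $\rho^{\alpha}(H+\lambda)^{-1}\rho^{-\alpha}$ and their gradient versions are bounded on $L^2$ with the gains $\min(1,\sigma^{-2})$, resp.\ $\min(1,\sigma^{-1})$, from Lemmas~\ref{xianren}, \ref{125dcferey}, (\ref{chun1}) and their $H$-analogues, the exponential decay of the kernels absorbing the weights for $\alpha$ small. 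The $V$-contribution is then $\lesssim\mu_1\int_0^\infty\lambda^{1/2}\min(1,\sigma^{-2})^2\,d\lambda\lesssim\mu_1$, absolutely convergent. The $X$-contribution $-\tfrac1\pi\int_0^\infty\lambda^{1/2}(-\Delta+\lambda)^{-1}X(H+\lambda)^{-1}\,d\lambda$ is only conditionally convergent; here I first split off all Neumann iterates containing $W$ at least twice (absolutely convergent, $O(\mu_1^2)$) and then carry out the $\lambda$-integration on the remaining all-free piece, using $\int_0^\infty\lambda^{1/2}(-\Delta+\lambda)^{-2}\,d\lambda=\tfrac{\pi}{2}(-\Delta)^{-1/2}$, to identify its leading part with $A$ times the Riesz transform $\nabla(-\Delta)^{-1/2}$, bounded on $\rho^{-\alpha}L^2$ by Lemma~\ref{degft5645}, the remaining commutators being again absolutely convergent. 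Combining, $\|Df-H^{1/2}f\|_{\rho^{-\alpha}L^2}\lesssim\mu_1\|f\|_{\rho^{-\alpha}L^2}$.

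The main obstacle is this $X$-contribution to (\ref{ouihbhuyu}): since $X=2h^{ij}A_i\partial_j$ is of order one, the crude $\lambda$-wise bound of $(-\Delta+\lambda)^{-1}X(H+\lambda)^{-1}$ decays only like $\lambda^{-1}$, so one cannot estimate the integrand and then integrate, and must instead perform the $\lambda$-integration first, trading resolvents for fractional powers and a Riesz transform controlled by Lemma~\ref{degft5645}. The remaining points — strong convergence of Balakrishnan's integral, the uniform choice of intermediate exponents in the $L^p$ chain (where the wider admissible range on $\Bbb H^2$ is essential), the a priori finiteness justifying the absorption, and the density step — are routine.
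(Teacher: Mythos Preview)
Your argument for \eqref{pouihbhuyu} is correct for $0<s<\tfrac12$ and coincides with the paper's Case~1, but it breaks at the endpoint $s=\tfrac12$. With your chain for the $X$-part, the $\lambda$-slice is bounded by $\mu_1\,\lambda^{s}\min(1,\sigma^{-1+2\omega})\min(1,\sigma^{-2+\omega})$; at infinity (where $\lambda\sim\sigma^{2}$) this is $\lambda^{\,s-\frac32+\frac{3\omega}{2}}$, which for $s=\tfrac12$ equals $\lambda^{-1+\frac{3\omega}{2}}$ with $\omega>0$ forced by the strict inequality $p<q$ in Lemma~\ref{125dcferey}. So the integral diverges logarithmically and your claim ``$O(\lambda^{-1-\delta})$ for $s\le\tfrac12$'' is false precisely at $s=\tfrac12$. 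The paper isolates this case and uses the \emph{reversed} subordination identity
\[
(-\Delta)^{1/2}f-H^{1/2}f=-c\!\int_0^\infty\!\lambda^{1/2}(\lambda+H)^{-1}W(\lambda-\Delta)^{-1}f\,d\lambda,
\]
so that the free resolvent sits on the right. Then one writes $\|X(\lambda-\Delta)^{-1}f\|_{L^{p_1}}\lesssim\|A\|_{L^{l_1}}\|D(\lambda-\Delta)^{-1}f\|_{L^{q_1}}$ and commutes $D$ through the Fourier multiplier $(\lambda-\Delta)^{-1}$ to get $\|(\lambda-\Delta)^{-1}Df\|_{L^{q_1}}\lesssim\sigma^{-2+\omega}\|Df\|_{L^{p_1}}$. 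This restores the full $\sigma^{-2}$ decay at the price of $\|Df\|_{L^{p}}$ on the right, and the integrand becomes $\lambda^{1/2}\sigma^{-4+2\omega}$, integrable; the absorption step then closes.

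The same trick disposes of \eqref{ouihbhuyu}: in weighted $L^2$ one uses $\|\nabla(\lambda-\Delta)^{-1}f\|_{\rho^{-\alpha}L^2}\lesssim\|(\lambda-\Delta)^{-1}Df\|_{\rho^{-\alpha}L^2}$ via Lemma~\ref{degft5645} and then \eqref{lcize34}, arriving at $\|Df-H^{1/2}f\|_{\rho^{-\alpha}L^2}\lesssim\mu_1(\|Df\|_{\rho^{-\alpha}L^2}+\|f\|_{\rho^{-\alpha}L^2})$. Your alternative route---commuting $X$ past $(-\Delta+\lambda)^{-1}$ to assemble $(-\Delta+\lambda)^{-2}$---produces commutators $[X,-\Delta]$ that contain $\nabla A$, on which Corollary~\ref{3} places no hypothesis; as written, that step is not justified. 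The reversed identity avoids any differentiation of $A$ because the commutation is with the scalar Fourier multiplier $D$, not with $X$.
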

\begin{proof}
{\bf Case 1. $0<s<\frac{1}{2}$.}
Given $p\in[2,\infty)$, fix a constant $q$ such that $2\le p<q<\infty$, $\frac{1}{p}-\frac{1}{q}<\frac{1}{200}(\frac{1}{2}-s)$ in the following proof.
And the $\omega$ defined in Lemma \ref{125dcferey} is fixed to be $\frac{1}{100}(\frac{1}{2}-s)$.
Then we see  $2s+3\omega-3<-1$.
Recall $W=V+X$ defined in Section 1. Balakrishnan's formula for non-negative operators and direct calculations (see Lemma \ref{pjkiunhyo89} in Appendix) give,
\begin{align}\label{fdffgr45678}
{(-\Delta)^s}f - {H^s}f = c(s)\int_0^\infty  {{\lambda ^s}{{(\lambda  - \Delta )}^{ - 1}}W {{(\lambda - H)}^{ - 1}}fd\lambda }.
\end{align}
Let $\lambda=\sigma^2-\frac{1}{4}$, (\ref{fdffgr45678}) yields
\begin{align}
 &{( - \Delta )^s}f - {H^s}f\nonumber \\
 &= 2c(s)\int^\infty_{\frac{1}{2}}  (\sigma ^2 -1/4)^s (- \Delta+\sigma^2-1/4 )^{ - 1}W(-H+\sigma ^2 - 1/4)^{ - 1}f\sigma d\sigma.\label{kingyui}
\end{align}
For $\sigma\ge \frac{1}{2}$, $\frac{1}{a}+\frac{1}{q}=\frac{1}{p}$, (\ref{Vab1}), (\ref{yvui}), (\ref{yvui3}) and H\"older show that
\begin{align*}
 {\left\| {{V}{{( - \Delta+{\sigma ^2} - 1/4)}^{ - 1}}} \right\|_{{L^p} \to {L^p}}} &\lesssim {\left\| {{V}} \right\|_{{L^a}}}\left\| ( - \Delta +\sigma^2 - 1/4)^{ - 1} \right\|_{{L^p} \to {L^q}}\lesssim \mu_1  \\
 \left\|X ( - \Delta+{\sigma ^2} - 1/4)^{-1}\right\|_{{L^p} \to {L^p}} &\lesssim  \left\| A \right\|_{L^{a}}\left\| \nabla( - \Delta +\sigma^2 - 1/4 )^{ - 1} \right\|_{{L^p} \to {L^q}} \lesssim  \mu_1.
\end{align*}
Hence we get ${\left\| (V+X)( - \Delta+\sigma^2-1/4)^{ - 1} \right\|_{{L^p} \to {L^p}}} \lesssim \mu_1$,
by which it follows that
\begin{align}\label{yhy7vui3}
\left\| \left( I + W( - \Delta+\sigma^2 - 1/4)^{ - 1} \right)^{ - 1} \right\|_{{L^p} \to {L^p}} \le 1.
\end{align}
Therefore, we conclude from the resolvent identity that
\begin{align}
&\left\| (H+\sigma^2-1/4 )^{-1} \right\|_{L^p \to L^q}\nonumber\\
&\lesssim  \left\| ( - \Delta+\sigma ^2 -1/4 )^{ - 1} \right\|_{L^p \to L^q}\left\| \left( I + W( - \Delta+\sigma ^2- 1/4)^{ - 1} \right)^{ - 1} \right\|_{L^p \to {L^p}}\nonumber\\
&\lesssim \min(1,{\sigma ^{ - 2+\omega}}),\label{9876gvrft}
\end{align}
Consequently, for any $2\le k< p<q<\infty$ and $\frac{1}{l}+\frac{1}{q}=\frac{1}{k}$, by H\"older we get
\begin{align}
&\left\| (- \Delta+\sigma^2-1/4 )^{ - 1}V(  H + {\sigma ^2} -1/4 )^{ - 1} \right\|_{L^p \to L^p}\nonumber\\
&\lesssim  \left\| (-\Delta+\sigma^2 - 1/4)^{ - 1} \right\|_{L^k \to L^{p}}\left\| V \right\|_{L^{l}}\left\| \left( H + {\sigma ^2} - 1/4 \right)^{ - 1} \right\|_{L^p \to L^q}\nonumber\\
&\lesssim  \mu_1 \min(1,\sigma^{-4+2\omega}).\label{yyu7vui3}
\end{align}
The left is to bound the magnetic part. By the formal resolvent identity and (\ref{yvui3}), (\ref{yhy7vui3}),
\begin{align*}
&\left\| \nabla (H+\sigma^2-1/4 )^{-1} \right\|_{L^p\to {L^q}}\\
&\lesssim \left\| \nabla(- \Delta+\sigma^2- 1/4 )^{- 1}\right\|_{L^p \to L^q}\left\| ( I + W( -\Delta+\sigma^2 -1/4)^{- 1})^{-1} \right\|_{{L^p} \to {L^p}} \\
&\lesssim \min(1,\sigma ^{ -1+2\omega}).
\end{align*}
Thus for $2\le k<p<q<\infty$ and $\frac{1}{l} + \frac{1}{q} = \frac{1}{k}$ we have
\begin{align}
 &\left\| ( - \Delta+\sigma^2 - 1/4 )^{-1}X (H + {\sigma^2} - 1/4 )^{ - 1} \right\|_{L^p\to {L^p}}\nonumber\\
 &\lesssim  \left\| ( - \Delta+\sigma ^2- 1/4)^{- 1} \right\|_{L^k\to L^p}\left\| A \right\|_{L^l}\left\| \nabla  (H + {\sigma ^2} -1/4)^{ - 1} \right\|_{{L^p} \to {L^q}}\nonumber\\
 &\lesssim  \mu_1\min (1,{\sigma ^{-3+3\omega}}).\label{yvxcd4ui3}
\end{align}
Therefore, (\ref{yvxcd4ui3}), (\ref{yyu7vui3}) and (\ref{kingyui}) yield for any $p\ge2$, there exists some $C(p)>0$ such that
\begin{align*}
\left\| (- \Delta )^sf\right\|_{{L^p}} \le C(p)\mu_1 {\left\| f \right\|_{{L^p}}} + \|{H}^sf\|_{{L^p}}.
\end{align*}
Thus by the inequality $\|f\|_{L^p} \lesssim \|D^sf\|_{{L^p}}$ for any $s>0$, we absorb the $\mu_1 \| f \|_{{L^p}}$ to the LHS by taking $\mu_1$ to be sufficiently small. Therefore, we conclude
\begin{align}\label{yui3}
\left\|( {- \Delta} )^sf \right\|_{{L^p}} \lesssim \| H^sf \|_{{L^p}}.
\end{align}
(\ref{yvxcd4ui3}), (\ref{yyu7vui3}) and (\ref{kingyui}) also yield
\begin{align*}
\|  H^sf \|_{L^p} \le C\mu_1 \| f\|_{{L^p}} +\|(- \Delta)^sf\|_{L^p}.
\end{align*}
Thus (\ref{yui3}) and the inequality $\| f \|_{{L^p}}\lesssim \| D^sf|_{L^p}$ for any $s>0$ imply
$$
{\left\| f \right\|_{{L^p}}} \lesssim {\left\| {{{H}^s}f} \right\|_{{L^p}}}.
$$
Therefore, absorbing  the term $\mu_1{\left\| f \right\|_{{L^p}}}$ to the LHS by letting $\mu_1$ to be sufficiently small gives our lemma.\\
{\bf Case 2. $s=\frac{1}{2}$.} In this case, given $q_1\in[2,\infty)$, fix a constant $p_1$ such that $2\le p_1<q_1<\infty$, $\frac{1}{p_1}-\frac{1}{q_1}<\frac{1}{200}$ in the following proof.
And the $\omega$ defined in Lemma \ref{125dcferey} is fixed to be $\frac{1}{100}$.
Instead of (\ref{fdffgr45678}), we use the following "inverse" direction identity: (see Lemma \ref{pjkiunhyo89} below)
\begin{align}\label{5fdg}
{(-\Delta)^{\frac{1}{2}}f - H^{\frac{1}{2}}}f =-c(s)\int_0^\infty \lambda ^{\frac{1}{2}}(\lambda - H)^{ - 1}W(\lambda  - \Delta )^{-1} fd\lambda .
\end{align}
Let $\sigma^2-\frac{1}{4}=\lambda$, (\ref{5fdg}) becomes
\begin{align}
&{(-\Delta)^{\frac{1}{2}}}f - {H^{\frac{1}{2}}}f \nonumber\\
&=-2c(s)\int^\infty_{\frac{1}{2}} (\sigma^2-1/4)^{\frac{1}{2}}(\sigma^2-1/4- H)^{ - 1}W(\sigma^2-1/4 - \Delta )^{-1} f\sigma d\sigma .\label{5fdg1}
\end{align}
The same arguments as proving (\ref{9876gvrft}) show
\begin{align}\label{ohnxheyeu}
\left\| (H+\sigma^2-1/4 )^{-1} \right\|_{L^{p_1} \to L^{q_1}}\lesssim \min(1,{\sigma ^{ - 2+\omega}}).
\end{align}
And for $\frac{1}{l_1}+\frac{1}{q_1}=\frac{1}{p_1}$, the same arguments give
\begin{align}
 &\left\|V  ( - \Delta+\sigma^2 - 1/4 )^{-1}\right\|_{L^{p_1}\to {L^{q_1}}}\lesssim \left\| V\right\|_{L^{l_1}}\left\| ( - \Delta+\sigma^2 - 1/4 )^{-1} \right\|_{{L^{p_1}} \to {L^{q_1}}}\nonumber\\
 &\lesssim  \mu_1\min (1,{\sigma ^{-2+\omega}}).\label{y3po9o8uhb}
\end{align}
The key point is the way of dealing with $X  ( - \Delta+\sigma^2 - 1/4 )^{-1}$ to avoid the loss of decay of $\sigma$. In fact, by the equivalence of $\|\nabla f\|_{L^p}$ and $\|D f\|_{L^p}$, we have
\begin{align*}
 \left\|X  ( - \Delta+\sigma^2 - 1/4 )^{-1}\right\|_{L^{p_1}\to {L^{q_1}}}\lesssim \|A\|_{L^{l_1}} \left\| D( - \Delta+\sigma ^2- 1/4)^{- 1} \right\|_{{L^{p_1}} \to {L^{q_1}}}.
\end{align*}
Since $D$ commutes with $D( - \Delta+\sigma ^2- 1/4)^{- 1}$, we have for any $f\in \mathcal{H}^{\frac{1}{2},p_1}$,
\begin{align}\label{pifouyuh}
\|X  ( - \Delta+\sigma^2 - 1/4 )^{-1}f\|_{{L^{q_1}}}\lesssim \mu_1\min (1,{\sigma ^{-2+\omega}})\|Df\|_{L^{p_1}}.
\end{align}
Therefore, (\ref{ohnxheyeu}), (\ref{y3po9o8uhb}), (\ref{pifouyuh}) and (\ref{5fdg1}) show
\begin{align}
\|D f - H^{\frac{1}{2}}f \|_{L^{p_1}}
\lesssim \mu_1(\|Df\|_{L^{p_1}}+\|f\|_{L^{p_1}})\int^\infty_{\frac{1}{2}} (\sigma^2-1/4)^{\frac{1}{2}}\min (1,{\sigma ^{-4+2\omega}}) \sigma d\sigma.\label{5fdghg1}
\end{align}
Thus by $\|f\|_{L^{p_1}}\lesssim \|Df\|_{L^{p_1}}$, for any $p_1\in[2,\infty)$ and  $0<\mu_1\ll 1$, there holds
\begin{align}\label{0897bcvft}
\|D f \|_{L^{p_1}}\lesssim  \|H^{\frac{1}{2}}f \|_{L^{p_1}}.
\end{align}
The inverse direction is easy by (\ref{5fdghg1}) and $\|f\|_{L^{p_1}}\lesssim \|Df\|_{L^{p_1}}$. Thus (\ref{pouihbhuyu}) has been obtained.\\
{\bf Proof of (\ref{ouihbhuyu}).} By Lemma (\ref{125dcferey}), (\ref{5fdg1}) and similar arguments as {\bf Case 2}, one has
\begin{align*}
&\|D f - H^{\frac{1}{2}}f \|_{\rho^{-\alpha}L^{2}}\\
&\lesssim (\|A\|_{L^{\infty}}\|Df\|_{\rho^{-\alpha}L^{2}}+\|V\|_{L^{\infty}}\|f\|_{\rho^{-\alpha}L^2})\int^\infty_{\frac{1}{2}} (\sigma^2-1/4)^{\frac{1}{2}}\min (1,{\sigma ^{-4}}) \sigma d\sigma.
\end{align*}
Thus (\ref{ouihbhuyu}) follows by letting $\mu_1$ be sufficiently small.
\end{proof}

In the following lemma, we prove the equivalence between $D$ and $H^{\frac{1}{2}}$ in the weighted $L^2$ space.
\begin{lemma}\label{equi2}
Let $H$ satisfy the assumptions in Corollary \ref{3}. For $0<\alpha\ll1$, $0<\mu_1\ll1$, we have
\begin{align}
\frac{1}{C}{\left\|H^{\frac{1}{2}}f \right\|_{{\rho ^{ - \alpha }}{L^2}}} &\le {\left\| Df \right\|_{{\rho ^{ - \alpha }}{L^2}}} \le C{\left\| {{{H}^{\frac{1}{2}}}f} \right\|_{{\rho ^{ - \alpha }}{L^2}}}.\label{ji876}\\
{\left\| {\nabla f} \right\|_{{\rho ^{ - \alpha }}{L^2}}} &\le C{\left\| {{{H}^{\frac{1}{2}}}f} \right\|_{{\rho ^{ - \alpha }}{L^2}}}.\label{equi3}
\end{align}
\end{lemma}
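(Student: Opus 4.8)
The plan is to reduce both inequalities to three facts already at hand: the near-identity estimate (\ref{ouihbhuyu}), the $\rho^{-\alpha}L^2$-boundedness of the Riesz transform $\nabla D^{-1}$ (Lemma \ref{degft5645}), and a weighted Poincar\'e inequality; one then closes the estimate by an absorption argument using $\mu_1\ll1$.

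The first step I would record is the weighted Poincar\'e inequality: for $0<\alpha<\frac{1}{2}$ and $g\in C^\infty_c(\Bbb D;\Bbb C^2)$ one has $\|g\|_{\rho^{-\alpha}L^2}\lesssim\|\nabla g\|_{\rho^{-\alpha}L^2}$, with constant $(\frac{1}{2}-\alpha)^{-1}$. Indeed, $\|g\|_{\rho^{-\alpha}L^2}=\|\rho^\alpha g\|_{L^2}$, and since $\nabla\rho^\alpha=-\alpha\rho^\alpha\nabla r$ with $|\nabla r|=1$, setting $h=\rho^\alpha g$ gives $\rho^\alpha\nabla g=\nabla h+\alpha(\nabla r)h$, so that
\begin{align*}
\|\rho^\alpha\nabla g\|_{L^2}\ge\|\nabla h\|_{L^2}-\alpha\|h\|_{L^2}\ge\bigl(\tfrac{1}{2}-\alpha\bigr)\|h\|_{L^2},
\end{align*}
where I used the spectral gap $\|\nabla h\|_{L^2}^2=\langle-\Delta h,h\rangle\ge\frac{1}{4}\|h\|_{L^2}^2$. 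Combined with Lemma \ref{degft5645}, this gives that $D^{-1}$ is bounded on $\rho^{-\alpha}L^2$, namely $\|D^{-1}g\|_{\rho^{-\alpha}L^2}\lesssim\|\nabla D^{-1}g\|_{\rho^{-\alpha}L^2}\lesssim\|g\|_{\rho^{-\alpha}L^2}$; taking $g=Df$ yields the auxiliary bound
\begin{align*}
\|f\|_{\rho^{-\alpha}L^2}=\|D^{-1}Df\|_{\rho^{-\alpha}L^2}\lesssim\|Df\|_{\rho^{-\alpha}L^2}.
\end{align*}

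With these in place, (\ref{ji876}) follows by absorption. From $Df=H^{\frac{1}{2}}f+(Df-H^{\frac{1}{2}}f)$, (\ref{ouihbhuyu}) and the auxiliary bound,
\begin{align*}
\|Df\|_{\rho^{-\alpha}L^2}\le\|H^{\frac{1}{2}}f\|_{\rho^{-\alpha}L^2}+C\mu_1\|f\|_{\rho^{-\alpha}L^2}\le\|H^{\frac{1}{2}}f\|_{\rho^{-\alpha}L^2}+C'\mu_1\|Df\|_{\rho^{-\alpha}L^2},
\end{align*}
so for $\mu_1$ small enough the last term is absorbed into the left side, giving $\|Df\|_{\rho^{-\alpha}L^2}\lesssim\|H^{\frac{1}{2}}f\|_{\rho^{-\alpha}L^2}$; the reverse inequality follows at once from (\ref{ouihbhuyu}) together with $\|f\|_{\rho^{-\alpha}L^2}\lesssim\|Df\|_{\rho^{-\alpha}L^2}$, needing no absorption. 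Then (\ref{equi3}) is immediate from Lemma \ref{degft5645} and (\ref{ji876}): $\|\nabla f\|_{\rho^{-\alpha}L^2}=\|\nabla D^{-1}(Df)\|_{\rho^{-\alpha}L^2}\lesssim\|Df\|_{\rho^{-\alpha}L^2}\lesssim\|H^{\frac{1}{2}}f\|_{\rho^{-\alpha}L^2}$.

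The one point requiring care is that the absorption step is legitimate only once $\|Df\|_{\rho^{-\alpha}L^2}$ (and the other weighted norms in play) are known a priori to be finite. I would handle this by first proving the estimates for $f$ in the dense class $C^\infty_c(\Bbb D;\Bbb C^2)$, where finiteness is clear, and then passing to general $f$ by density and continuity; one also has to check that the constants in (\ref{ouihbhuyu}) and in Lemma \ref{degft5645} do not depend on $f$, so that the smallness threshold on $\mu_1$ is uniform. I expect this bookkeeping to be the main, though routine, obstacle; the remainder is a direct concatenation of the three ingredients above.
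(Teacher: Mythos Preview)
Your proof is correct and follows essentially the same route as the paper: both establish the weighted Poincar\'e inequality $\|f\|_{\rho^{-\alpha}L^2}\lesssim\|\nabla f\|_{\rho^{-\alpha}L^2}$ (via the Leibniz rule and the spectral gap), combine it with Lemma~\ref{degft5645} to get $\|f\|_{\rho^{-\alpha}L^2}\lesssim\|Df\|_{\rho^{-\alpha}L^2}$, and then use (\ref{ouihbhuyu}) with an absorption argument in $\mu_1$ to close (\ref{ji876}), deducing (\ref{equi3}) from (\ref{ji876}) and Lemma~\ref{degft5645}. Your treatment of the a priori finiteness via density in $C^\infty_c$ is a detail the paper leaves implicit.
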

\begin{proof}
It is easy to see (\ref{equi3}) follows directly from (\ref{ji876}) and Lemma \ref{degft5645}. Thus it suffices to prove (\ref{ji876}).
By $|\nabla d(x,0)|=1$ and Leibnitz rule we have
$${\left\| {{\rho ^\alpha }f} \right\|_{{L^2}}} \lesssim  {\left\| {\nabla \left( {{\rho ^\alpha }f} \right)} \right\|_{{L^2}}} \le {\left\| {{\rho ^\alpha }\nabla f} \right\|_{{L^2}}} + C\alpha {\left\| {{\rho ^\alpha }f} \right\|_{{L^2}}}.
$$
Then the smallness of $\alpha$ implies
$${\left\| f \right\|_{{\rho ^{ - \alpha }}{L^2}}} \le C{\left\| {\nabla f} \right\|_{{\rho ^{ - \alpha }}{L^2}}}.
$$
By Lemma \ref{degft5645}, we get
\begin{align}\label{poxde45}
\left\| f \right\|_{\rho ^{- \alpha }L^2} \lesssim \|Df\|_{\rho ^{ - \alpha }{L^2}}.
\end{align}
Meanwhile, (\ref{ouihbhuyu}) gives
\begin{align}
{\left\| Df \right\|_{{\rho^{-\alpha}L^2}}} \le C\mu_1 {\left\| f \right\|_{{\rho^{-\alpha}L^2}}} + {\left\| {{{ { H}}^{\frac{1}{2}}}f} \right\|_{{\rho^{-\alpha}L^2}}}.\label{p87gvbt}
\end{align}
Since $\mu_1$ is sufficiently small, the RHS of (\ref{p87gvbt}) can be absorbed to the LHS as before.
Thus the second inequality of (\ref{ji876}) is done. The first inequality of (\ref{ji876}) follows by (\ref{ouihbhuyu}), (\ref{poxde45}) and the second one.
\end{proof}

\subsection{Conclusion}
By Lemma \ref{eqicfder}, Lemma \ref{equi2}, Lemma \ref{wqcun9}, Corollary \ref{tudiye}, we obtain Corollary \ref{3} by applying Theorem 1.1.

\section{Appendix}
\begin{lemma}\label{pjkiunhyo89}
Let $s\in(0,2)$. $H$ is the nonnegative self-adjoint operator in Corollary 1.1. The following identity holds for $f\in \mathcal{H}^2$:
\begin{align*}
H^{\frac{s}{2}}&=(-\Delta)^{\frac{s}{2}}+c(s)\int^{\infty}_0\lambda^{\frac{s}{2}}(\lambda-\Delta)^{-1}W(\lambda -H)^{-1}\mathrm{d}\lambda\\
H^{\frac{s}{2}}&=(-\Delta)^{\frac{s}{2}}-c(s)\int^{\infty}_0\lambda^{\frac{s}{2}}(\lambda -H)^{-1}W(\lambda-\Delta)^{-1}\mathrm{d}\lambda.
\end{align*}
\end{lemma}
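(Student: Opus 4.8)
\emph{Proof proposal.} These are Balakrishnan-type representations for the difference of two fractional powers, so the plan is to start from Balakrishnan's formula for each of $-\Delta$ and $H$, subtract, and then insert the second resolvent identity. Put $\beta=s/2\in(0,1)$. For a nonnegative self-adjoint operator $L$ and $0<\beta<1$ one has, for every $g\in D(L)$,
\begin{align*}
L^{\beta}g=\frac{\sin\pi\beta}{\pi}\int_0^\infty\lambda^{\beta-1}(\lambda+L)^{-1}Lg\,\mathrm{d}\lambda ,
\end{align*}
the integral converging absolutely in $L^2$ because $\|(\lambda+L)^{-1}Lg\|_{L^2}=\|L(\lambda+L)^{-1}g\|_{L^2}\le\min\{\|g\|_{L^2},\ \lambda^{-1}\|Lg\|_{L^2}\}$ and $0<\beta<1$. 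I would apply this to $L=-\Delta$ and to $L=H$. By Lemma~\ref{povxde45} both operators have domain $\mathcal H^2$, so both representations hold for the given $f\in\mathcal H^2$, and, each integrand being absolutely integrable, the two may be subtracted termwise.

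Using $(\lambda+L)^{-1}L=I-\lambda(\lambda+L)^{-1}$ in each integrand, the identity parts cancel in the difference, leaving
\begin{align*}
H^{\beta}f-(-\Delta)^{\beta}f=\frac{\sin\pi\beta}{\pi}\int_0^\infty\lambda^{\beta}\big[(\lambda-\Delta)^{-1}-(\lambda+H)^{-1}\big]f\,\mathrm{d}\lambda ,
\end{align*}
where $(\lambda-\Delta)^{-1}$ denotes $(-\Delta+\lambda)^{-1}$. Since $H=-\Delta+W$ with $D(H)=D(-\Delta)=\mathcal H^2$ and $(\lambda+H)^{-1}$ maps $L^2$ into $\mathcal H^2$, on which $W$ acts boundedly into $L^2$ under (\ref{Vab1}), the second resolvent identity applied to the operators $-\Delta+\lambda$ and $H+\lambda$ gives, for each $\lambda>0$,
\begin{align*}
(\lambda-\Delta)^{-1}-(\lambda+H)^{-1}=(\lambda-\Delta)^{-1}W(\lambda+H)^{-1}=(\lambda+H)^{-1}W(\lambda-\Delta)^{-1},
\end{align*}
the two right-hand sides coinciding because both equal the left-hand side. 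Substituting either expression for the bracket into the previous display produces the two asserted identities with $c(s)=\pi^{-1}\sin(\pi s/2)$, the chosen order of the resolvents deciding which of the two forms one records.

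The one point that needs care is which integrand one uses to justify the absolute convergence, and hence the interchanges. In the \emph{difference} form the integrand is dominated by $\lambda^{\beta-1}\min\{\|f\|_{L^2},\ \lambda^{-1}(\|\Delta f\|_{L^2}+\|Hf\|_{L^2})\}$, which is integrable on $(0,\infty)$ for every $\beta\in(0,1)$, i.e.\ for the whole range $s\in(0,2)$; this is what makes the manipulation legitimate. By contrast, a direct estimate of the \emph{rewritten} integrand $\lambda^{s/2}(\lambda-\Delta)^{-1}W(\lambda+H)^{-1}f$ using only $\|(\lambda-\Delta)^{-1}\|_{L^2\to L^2}\lesssim\lambda^{-1}$ and $\|W(\lambda+H)^{-1}\|_{L^2\to L^2}\lesssim\lambda^{-1/2}$ (the latter from $\|\nabla(\lambda+H)^{-1}\|_{L^2\to L^2}\lesssim\lambda^{-1/2}$, a consequence of $\|H(\lambda+H)^{-1}\|\le1$ and $\|(\lambda+H)^{-1}\|_{L^2\to L^2}\le\lambda^{-1}$) only yields $O(\lambda^{s/2-3/2})$ at high frequency, which suffices for $s<1$ but not for $s\in[1,2)$. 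So one should not try to prove convergence from the rewritten form; one establishes it from the difference form and only afterwards rewrites the integrand pointwise in $\lambda$. I expect this bookkeeping, rather than any substantial estimate, to be the main obstacle.
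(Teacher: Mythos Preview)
Your proposal is correct and follows essentially the same route as the paper: Balakrishnan's formula for each of $-\Delta$ and $H$, subtraction, and the second resolvent identity. The paper's own proof is a two-line sketch referring to \cite{Laiazaea1a} for details, whereas you supply the computation $(\lambda+L)^{-1}L=I-\lambda(\lambda+L)^{-1}$ and, more usefully, a careful discussion of absolute convergence in the full range $s\in(0,2)$ via the difference form rather than the rewritten integrand; that last point is a genuine clarification the paper omits.
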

\begin{proof}
For $s\in(0,2)$, the A.V. Balakrishnan formula for nonnegative operators $T$ is
$${T^{\frac{s}{2}}}f = c(s)\int_0^\infty  {{\tau ^{ \frac{s}{2}-1}}} {\left( {\tau  + T} \right)^{ - 1}}Tfd\tau.
$$
Then we have (\ref{pjkiunhyo89}) by direct calculations and the resolvent identity:
$$
(-\Delta-\tau)^{-1}-(-\Delta+W-\tau)^{-1}=(-\Delta-\tau)^{-1}W(-\Delta+W-\tau)^{-1}.
$$
See \cite{Laiazaea1a} for the concrete calculation.
\end{proof}

\begin{lemma}\label{phb9bv}
Assume that the time supports of $F$ and $G$ are of size $2^j$. For $(p,q)\in(2,6)$ and $\sigma_1> \frac{3}{2}(\frac{1}{2}-\frac{1}{p})$,  $\sigma> \frac{3}{2}(\frac{1}{2}-\frac{1}{q})$, one has
\begin{align*}
\left| {T_{j,\infty }^{\sigma ,{\sigma _1}}(F,G)} \right|\lesssim {2^{ - \infty j}}{\left\| F \right\|_{L_t^2L_x^{p'}}}{\left\| G \right\|_{L_t^2L_x^{q'}}}.
\end{align*}
\end{lemma}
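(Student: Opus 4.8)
The plan is to peel off the two time variables and reduce the whole statement to a single dispersive estimate for the high-frequency truncated half-wave propagator, then cash in the super-polynomial decay in the time separation. Fix $s,t$ with $t-2^j\le s\le t-2^{j-1}$, so that $\tau:=t-s$ satisfies $\tau\sim 2^j$; for bounded $j$ the estimate is elementary (finitely many $j$, bounded operators on a bounded time window), so assume $j$ large, in particular $\tau\ge 2$. By Fubini the inner integral in (\ref{1suiyue}) is the $L^2_x$ pairing $\langle e^{i\tau D}D^{-\sigma}\widetilde{D}^{-\sigma_1}\chi_\infty(D)F(s),\,G(t)\rangle$, which H\"older bounds by $\|e^{i\tau D}D^{-\sigma}\widetilde{D}^{-\sigma_1}\chi_\infty(D)F(s)\|_{L^q_x}\,\|G(t)\|_{L^{q'}_x}$. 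Hence, if one knows
\[
\bigl\|e^{i\tau D}D^{-\sigma}\widetilde{D}^{-\sigma_1}\chi_\infty(D)f\bigr\|_{L^q_x}\lesssim \tau^{-\infty}\|f\|_{L^{p'}_x},\qquad \tau\sim 2^j,
\]
then inserting this, integrating in $s$ over the slab $\{|t-s|\sim 2^j\}$ and in $t$ over the time support of $G$ by Cauchy--Schwarz, one loses only a factor $2^j$, which is absorbed by $\tau^{-\infty}=2^{-\infty j}$; this yields exactly $|T_{j,\infty}^{\sigma,\sigma_1}(F,G)|\lesssim 2^{-\infty j}\|F\|_{L^2_tL^{p'}_x}\|G\|_{L^2_tL^{q'}_x}$. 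So the whole content is in the displayed dispersive bound.

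To prove that bound I would run the analytic-interpolation scheme of \cite{AaPa,AaPa2a}, now with two analyticity parameters. View $e^{i\tau D}D^{-\sigma}\widetilde{D}^{-\sigma_1}\chi_\infty(D)$ as an analytic family over the bi-strip $\{0\le \Re\sigma\le \frac32\}\times\{0\le \Re\sigma_1\le \frac32\}$, inserting the Gaussian/Gamma factors $\frac{e^{\sigma^2}}{\Gamma(3/2-\sigma)}\cdot\frac{e^{\sigma_1^2}}{\Gamma(3/2-\sigma_1)}$ as in (\ref{poui878}) so that the family stays well behaved up to the faces $\Re\sigma=\frac32$ and $\Re\sigma_1=\frac32$. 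On the boundary I collect four corner estimates. At $(\Re\sigma,\Re\sigma_1)=(0,0)$ the operator is $e^{i\tau D}$ composed with the imaginary powers $D^{-i\beta}$, $\widetilde{D}^{-i\gamma}$ and with $\chi_\infty(D)$; imaginary powers are $L^r$-bounded for $1<r<\infty$ with norm polynomial in $\beta,\gamma$ (the multiplier theorem behind Remark 2.1) and $e^{i\tau D}$ is unitary on $L^2$, giving an $L^2\to L^2$ bound with polynomial weight and no $\tau$-decay. At the other three corners the total smoothing is $\ge\frac32$, so using Remark 2.1 to trade $\widetilde{D}$ for $D$ up to bounded multipliers, the kernel becomes a bounded-multiplier image of a constant times $\widetilde{w}^{\mu,\infty}_\tau$, $\mu=\Re(\sigma+\sigma_1)\ge\frac32$; the pointwise bound (\ref{hrxcfd3}) together with the Kunze--Stein inequality (Lemma \ref{stein}), exactly as in the proof of Corollary \ref{hasonggu}, yields $\|(\cdot)f\|_{L^k}\lesssim\tau^{-\infty}\|f\|_{L^{m'}}$ for $2<m,k<\infty$ (the $\tau^{-\infty}$ coming from the $r\le\tau/2$ part of the kernel, and the Kunze--Stein integral converging because $\varphi_0(r)\lesssim(1+r)e^{-r/2}$ absorbs a factor $\sinh r$ once two spherical-function factors are present). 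Stein's interpolation theorem, applied in the two parameters (say first in $\sigma_1$, then in $\sigma$; the polynomial constants are harmless), then delivers the displayed bound for every $p,q\in(2,6)$ with $\sigma>\frac32(\frac12-\frac1q)$ and $\sigma_1>\frac32(\frac12-\frac1p)$ --- and the decay $\tau^{-\infty}$ survives because $p,q>2$ forces $\sigma,\sigma_1>0$, so one never interpolates from the non-decaying corner $(0,0)$ alone. One may also assume $\sigma,\sigma_1<\frac32$, larger smoothing being only more favorable.

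The step I expect to be the main obstacle is the exponent bookkeeping in this two-parameter interpolation: one must verify that, by pushing the Lebesgue exponents at the three dispersive corners toward the endpoint $6$ and choosing the interpolation angles in $\sigma$ and $\sigma_1$ suitably, the parameter region actually covered contains the full set $\{\,\sigma>\frac32(\frac12-\frac1q),\ \sigma_1>\frac32(\frac12-\frac1p),\ p,q\in(2,6)\,\}$. This is precisely where the bilinear gain over the single-operator estimate (\ref{6.13u}) of Corollary \ref{hasonggu} (which would need total smoothing $>3(\frac12-\frac1q)\vee 3(\frac12-\frac1p)$) appears: the loss $\frac32(\frac12-\frac1q)$ is charged to the output side and $\frac32(\frac12-\frac1p)$ to the input side, exactly as in the exotic Strichartz estimates of \cite{AaPa2a}. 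A secondary, purely technical point is keeping the Gamma regularization consistent on both faces so that Stein's theorem applies verbatim. With the dispersive estimate established, the reduction of the first paragraph completes the proof.
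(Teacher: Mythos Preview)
Your reduction in the first paragraph is fine; the real issue is the fixed-$\tau$ dispersive bound you aim for. The two-parameter Stein interpolation you propose collapses to a one-parameter interpolation in the total smoothing $\mu=\sigma+\sigma_1$: since $D^{-\sigma}\widetilde D^{-\sigma_1}=\widetilde D^{-(\sigma+\sigma_1)}\,(D^{-1}\widetilde D)^{\sigma}$ and $(D^{-1}\widetilde D)^{\sigma}$ is a bounded multiplier on every $L^r$, $1<r<\infty$ (Remark~2.1), the analytic family depends only on $\mu$ up to harmless factors. Hence your four-corner scheme reproduces exactly Corollary~\ref{hasonggu}, i.e.\ it gives $L^{p'}\to L^q$ with $\tau^{-\infty}$ only under the \emph{max} condition $\sigma+\sigma_1>3\max(\tfrac12-\tfrac1p,\tfrac12-\tfrac1q)$. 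This is strictly stronger than the stated separate conditions when $p\neq q$: take $q=2^+$, $p=6^-$; then the hypothesis allows $\sigma+\sigma_1$ just above $\tfrac12$, while your interpolation needs it above $1$. The ``bilinear gain'' you invoke simply does not exist at the fixed-time operator level, because both smoothing parameters sit on the same single operator; there is no input/output split to charge them to separately.

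The paper closes this gap with a genuinely different ingredient that your scheme lacks: the \emph{time-integrated} Strichartz estimates of Lemma~\ref{thing}. At the edge $q=2$ it pairs $G$ in $L^1_tL^2_x$ and estimates the Duhamel integral of $F$ in $L^\infty_tL^2_x$ by the inhomogeneous Strichartz bound with input $L^{m'}_tL^{p'}_x$; this lets all the smoothing be absorbed on the $F$-side alone, at the price of a polynomial growth $2^{j(1-1/m)}$ in $j$. Symmetrically at $p=2$. The super-polynomial $j^{-\infty}$ comes only from the diagonal case $p=q$, via the dispersive estimate you use. Bilinear complex interpolation among these three families then covers every $(p,q)\in(2,6)^2$: since any interior point puts nonzero weight on the diagonal, the $2^{-\infty j}$ survives. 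To fix your argument you must either import Lemma~\ref{thing} for the off-diagonal boundary cases, as the paper does, or restrict your conclusion to $p=q$.
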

\begin{proof}
The lemma essentially belongs to \cite{AaPa2a}. As before, the results still hold after exchanging the two operators $\widetilde{D}$ and $D$ due to the equivalence  $\|\widetilde{D}\cdot\|_{L^p}\thicksim\|{D}\cdot\|_{L^p}$ for $p\in(1,\infty)$.
By complex interpolation it suffices to consider the following three cases
\begin{align*}
&(a) \mbox{ }2=q<p<6, \Re\sigma=\frac{3}{2}(\frac{1}{2}-\frac{1}{q}),\Re\sigma_1=0; \\
&(b) \mbox{ }2=p<q<6, \Re\sigma=0, \Re\sigma_1=\frac{3}{2}(\frac{1}{2}-\frac{1}{p});\\
&(c) \mbox{ }2<p=q<6, \Re\sigma=\Re\sigma_1>\frac{3}{2}(\frac{1}{2}-\frac{1}{q}),
\end{align*}
For the case $(a)$, since $\chi_{\infty}(D)\in \mathcal{B}(L^2;L^2)$,  the inhomogeneous estimate in Lemma \ref{thing} and H\"older imply for a non-endpoint admissible pair $(m,p)$
\begin{align}
\left| T^{\sigma ,{\sigma _1}}_{j,\infty}(F,G) \right|
&\le \mathop  {\left\| {\int_{t - {2^j}\lesssim s\lesssim t - {2^{j - 1}}} {{e^{\pm iD(t-s)}D^{ - {\sigma}}}} F(s)ds} \right\|_{L^{\infty}_tL_x^2}}{\left\| G \right\|_{L_t^1L_x^2}}\nonumber \\
&\lesssim {\left\| F \right\|_{L_t^{m'}L_x^{p'}}}{\left\| G \right\|_{L_t^1L_x^2}} \nonumber\\
&\lesssim {\left\| F \right\|_{L_t^2L_x^{p'}}}{\left\| G \right\|_{L_t^2L_x^2}}{2^{\frac{j}{2}}}{2^{j(\frac{1}{2} - \frac{1}{m'})}}.\label{hgt555655}
\end{align}
In the case $(b)$, the same arguments as $(a)$ yield
\begin{align}\label{vffffggggh}
\left| {T^{\sigma ,{\sigma _1}}_{j,\infty}(F,G)} \right|\lesssim {\left\| F \right\|_{L_t^2L_x^{p'}}}{\left\| G \right\|_{L_t^2L_x^2}}2^{\frac{j}{2}}2^{j(\frac{1}{2} - \frac{1}{m'})}.
\end{align}
For the case $(c)$, by Corollary \ref{hasonggu}, we have for $p\in(2,6)$ and $q\in(2,6)$
\begin{align*}
\left| {T^{\sigma ,{\sigma _1}}_{j,\infty}(F,G)} \right| &\lesssim  \mathop {\sup }\limits_t {\left\| {\int_{t - {2^j}\lesssim s\lesssim t - {2^{j - 1}}} {\chi_{\infty}(D){\widetilde{D}^{ - {\sigma _1}}D^{ - \sigma }}F(s)} ds} \right\|_{L_x^q}}{\left\| G \right\|_{L_t^1L_x^{q'}}} \\
&\lesssim {2^{ - j\infty }}\mathop {\sup }\limits_t\left({\int _{t - {2^j}\lesssim s\lesssim t - {2^{j - 1}}}}{\left\| {F(s)} \right\|_{L_x^{p'}}}ds\right
){\left\| G \right\|_{L_t^1L_x^{q'}}} \\
&\lesssim {2^{ - j\infty }}{\left\| {F(s)} \right\|_{L_t^1L_x^{p'}}}{\left\| G \right\|_{L_t^1L_x^{q'}}}.
\end{align*}
Hence H\"older gives
\begin{align}\label{jnbbvvffffggggh}
\left| {T^{\sigma ,{\sigma _1}}_{j,\infty}(F,G)} \right|\lesssim {2^{ - j\infty }}{\left\| {F(s)} \right\|_{L_t^2L_x^{p'}}}{\left\| G \right\|_{L_t^2L_x^{q'}}}.
\end{align}
Interpolating (\ref{hgt555655}), (\ref{vffffggggh}), (\ref{jnbbvvffffggggh}) gives our lemma.
\end{proof}

\noindent Ze Li. \textit{rikudosennin@163.com}\\
\noindent{\small{Academy of Mathematics and Systems Science (AMSS)}}\\
\noindent{\small{Chinese Academy of Sciences (CAS)}}\\
\noindent{\small{Beijing 100190, P. R. China}}\\


\begin{thebibliography}{999}
\bibitem{AaGaSa} L. Andersson, N. Gudapati and J. Szeftel. Global regularity for the 2+1 dimensional equivariant
Einstein-wave map system.  arXiv preprint 2015.

\bibitem{AaHaSa} J. Avron, I. Herbst, B. Simon. Schr\"odinger operators with magnetic fields. I. General interactions. Duke Math. J., {\bf45}(4), 847-883, 1978.

\bibitem{AaPa} J.P. Anker and V. Pierfelice. Wave and Klein-Gordon equations on hyperbolic spaces. Anal. PDE, {\bf7}(4), 953-995, 2014.

\bibitem{AaPa2a} J.P. Anker, V. Pierfelice and M. Vallarino. \emph{The wave equation on hyperbolic spaces}. J. Differential Equations,
 {\bf252}(10), 5613-5661, 2012.


\bibitem{BaMa} D. Borthwick and J.L. Marzuola. Dispersive Estimates for Scalar and Matrix Schr\"odinger Operators on $H^{n+1}$. Math. Phys. Anal. Geom., {\bf18}, 22, 2015.

\bibitem{CH} X.Chen, A.  Hassell. Resolvent and spectral measure on non-trapping asymptotically hyperbolic manifolds I: Resolvent construction at high energy.  Comm. Partial Differential Equations, {\bf41}(3), 515-578, 2016.


\bibitem{DG2} P. D'Ancona and L. Fanelli. Strichartz and smoothing estimates for dispersive equations with magnetic potentials. Comm. Partial Differential Equations, {\bf33}(6), 1082-1112, 2008.

\bibitem{DaGa3a} P. D'Ancona, L. Fanelli, L. Vega and N. Visciglia. Endpoint Strichartz estimates for the magnetic Schr\"odinger equation. J. Funct. Anal., {\bf258}(10), 3227-3240, 2010.

\bibitem{Da1a} P. D'ancona , V. Pierfelice. On the wave equation with a large rough potential. J. Funct. Anal., {\bf227}(1), 30-77, 2005.

\bibitem{EaGaSa1a} M.B. Erdogan, M. Goldberg, W. Schlag. Strichartz and smoothing estimates for Schr\"odinger operators with large magnetic potentials in $R^3$. J. Eur. Math. Soc. (JEMS), {\bf10}(2), 507-531, 2008.

\bibitem{EaGaSa2a} M.B. Erdogan, M. Goldberg, W. Schlag. Strichartz and smoothing estimates for Schr\"odinger operators with almost critical magnetic potentials in three and higher dimensions. Forum Math., {\bf21}(4), 687-722, 2009.


\bibitem{Ga} M. Goldberg. Strichartz estimates for Schr\"odinger operators with a non-smooth magnetic potential.  Discrete Contin. Dyn. Syst., {\bf31}(1), 109-118, 2011,

\bibitem{Haeala} S. Helgason. Groups and geometric analysis: integral geometry, invariant differential operators and spherical functions. Academic Press, AMS, 1994.

\bibitem{Haeabaeaya} E. Hebey. Sobolev spaces on Riemannian manifolds, Volume 1635 of Lecture Notes in Mathematics.
Springer-Verlag, Berlin, 1996.


\bibitem{IaKa} A.D. Ionescu, C.E. Kenig. Well-posedness and local smoothing of solutions of Schr\"odinger equations. Mathematical research letters, {\bf12}(2), 193-206, 2005.

\bibitem{IaPaSa} A. Ionescu, B. Pausader, G. Staffilani. On the global well-posedness of energy-critical Schr\"odinger equations in curved spaces. Anal. PDE, {\bf5}(4), 705-746, 2012.

\bibitem{Ya} K. Yajima. Schr\"odinger evolution equations with magnetic fields.  J. Anal. Math., {\bf56}(1), 29-76, 1991.

\bibitem{KaMa} T. Kato and K. Masuda. Trotter's product formula for nonlinear semigroups generated by the subdifferentials of convex functionals. J. Math. Soc. Japan, {\bf30}(1), 169-178, 1978.

\bibitem{KaMa1a} S. Klainerman and M. Machedon. Space-time estimates for null forms and the local existence
theorem. Comm. Pure Appl. Math., {\bf46}(9), 1221-1268, 1993.

\bibitem{KaSa} J. Krieger and W. Schlag. Concentration Compactness for critical wave maps. EMS Monographs.
European Mathematical Society, Z\"urich, 2012.

\bibitem{KaTa} M. Keel and T.Tao. Endpoint strichartz estimates. Amer. J. Math., {\bf120}(5), 955-980, 1998.

\bibitem{LaAa} A. Lawrie. The Cauchy problem for wave maps on a curved background. Calc. Var. Partial Differential Equations, {\bf45}(3-4), 505-548, 2012.

\bibitem{LaOaSa2a} A. Lawrie, S.J. Oh and S. Shahshahani. Gap eigenvalues and asymptotic dynamics of geometric wave equations on hyperbolic space. J. Func. Anal., {\bf271}(11): 3111-3161, 2016.

\bibitem{LaOaSa3a} A. Lawrie, S.J. Oh, and S. Shahshahani. Equivariant wave maps on the hyperbolic plane
with large energy. Math. Research Letters, {\bf 24}(2), 449-479, 2017.

\bibitem{LaOaSa} A. Lawrie, S.J. Oh and S. Shahshahani. The Cauchy problem for wave maps on hyperbolic space in dimensions $d\ge4$.  Int. Math. Res. Not. IMRN, rnw272, 2016.

\bibitem{LaOaSa4a} A. Lawrie, S.J. Oh, and S. Shahshahani. Stability of stationary equivariant wave maps from
the hyperbolic plane. Amer. J.  Math. {\bf 139}(4), 1085-1147, 2017.

\bibitem{Laiazaea} Z. Li. Asymptotic stability of large energy harmonic maps under the wave map from 2D hyperbolic spaces to 2D hyperbolic spaces. arXiv preprint July 2017.

\bibitem{Laiazaea1a} Z. Li. Asymptotic stability of solitons to 1D Nonlinear Schrodinger Equations in subcritical case. 2015 arXiv preprint, submitted.

\bibitem{LaMaZa} Z. Li, X. Ma, L. Zhao. Asymptotic stability of harmonic maps between 2D hyperbolic spaces under the wave map equation. II. Small energy case. 2017, arXiv preprint, submitted.

\bibitem{MaSa} N.S. Manton and P. Sutcliffe, Topological Solitons, Cambridge
University Press, 2004.

\bibitem{MBV} R. Melrose, A.S. Barreto, A. Vasy. Analytic Continuation and Semiclassical Resolvent Estimates on Asymptotically Hyperbolic Spaces. Comm. in Partial Differential Equations,  {\bf 39}, 452-511, 2014.

\bibitem{MaTa3a} J. Metcalfe and D. Tataru. Global parametrices and dispersive estimates for variable coefficient
wave equations. Math. Ann., {\bf353}(4), 1183-1237, 2012.

\bibitem{MaTa2a} J. Metcalfe and M. Taylor. Nonlinear waves on 3D hyperbolic space. Trans. Amer. Math.
Soc., {\bf363}(7), 3489-3529, 2011.

\bibitem{MaTa1a} J. Metcalfe and M. Taylor. Dispersive wave estimates on 3D hyperbolic space. Proc. Amer.
Math. Soc., {\bf140}(11), 3861-3866, 2012.

\bibitem{Mathphysics} M. Reed and B. Simon. Methods of modern mathematical physics, Vol. 4. Academic, 1978.

\bibitem{RaSa} I. Rodnianski, W. Schlag.  Time decay for solutions of Schr\"odinger equations with rough and time-dependent potentials. Invent. Math., {\bf155}(3), 451-513, 2004.

\bibitem{SaTa} R.J. Stanton and  P.A. Tomas. Expansions for spherical functions on noncompact symmetric spaces. Acta Math., {\bf140}(1), 251-276, 1978.

\bibitem{SaTaZa1a}  J. Shatah and A. Tahvildar-Zadeh. On the stability of stationary wave maps. Comm. Math. Phys., {\bf185}(1), 231-256, 1997.


\bibitem{Saiamaoana} B. Simon, Maximal and minimal Schr\"odinger forms, J. Operator Theory. {\bf1}, 37-47, 1979.

\bibitem{SaTa1a} J. Sterbenz and D. Tataru. Regularity of wave maps in 2+1 dimensions. Comm. Math. Phys.,
{\bf298}(1), 231-264, 2010.

\bibitem{Sataraia} R. S. Strichartz. Analysis of the Laplacian on the complete Riemannian manifold. J. Funct.
Anal., {\bf52}(1), 48-79, 1983.

\bibitem{Taylor} M.E. Taylor, Partial differential equations II: Qualitative studies of linear equations, Applied Mathematical Sciences,
vol. 116, Springer-Verlag, 1996.


\bibitem{Taaaoa1a} T. Tao. Global Regularity of Wave Maps II. Small Energy in Two Dimensions. Comm. Math. Phys., {\bf224}(2), 443-544, 2001.

\bibitem{Taaaoa2a} T. Tao. Global Regularity of Wave Maps III-VII. arXiv preprint. 2008-2009.

\bibitem{Taataaaraua4a} D. Tataru. Strichartz estimates in the hyperbolic space and global existence for the semilinear wave equation. Trans.  Amer. Math. Soc., {\bf353}(2), 795-807, 2001.

\bibitem{Taataaaraua3a} D. Tataru. On global existence and scattering for the wave maps equation. Amer. J. Math.,
{\bf123}(1), 37-77, 2001.

\bibitem{Vasy1} A. Vasy. Microlocal analysis of asymptotically hyperbolic and Kerr-de Sitter spaces (with an appendix by Semyon Dyatlov)
{\bf 194}(2),  381-513, 2013.

\bibitem{Vasy2} A. Vasy. The wave equation on asymptotically de Sitter-like spaces. Adv. Math. {\bf 223}(1), 49-97, 2010. .


\bibitem{Watson} G.N. Watson, A Treatise on the Theory of Bessel Functions, Cambridge
University Press, Cambridge, 1945.

\end{thebibliography}
\end{document}